\documentclass[preprint,11pt]{elsarticle}

\journal{JMAA}

\usepackage{amsmath}
\usepackage{amssymb}
\usepackage{amsthm}

\newtheorem{theorem}{Theorem}
\newtheorem{proposition}{Proposition}
\newtheorem{lemma}{Lemma}
\newtheorem{corollary}{Corollary}

\newtheorem{definition}{Definition}
\newtheorem{example}{Example}
\newtheorem{remark}{Remark}

\begin{document}

\begin{frontmatter}

\title{Zero Attractors of Partition Polynomials}

\author[1]{Robert P. Boyer
}
\ead{boyerrp@drexel.edu}

\author[2]{Daniel T. Parry}
\ead{daniel.parry@jpmchase.com}

  \address[1]{Department of Mathematics, Drexel University
  \\
  Philadelphia, PA 19104}
  
  \address[2]{Model Risk Governance and Review, JP Morgan Chase,
  \\          
  4 Chase, MetroTech Center 11245
New York, New York 
}

\begin{abstract}
A partition polynomial is a refinement of the partition number $p(n)$ whose coefficients count some special
partition statistic. Just as partition numbers have useful asymptotics so do partition polynomials.
In fact, their asymptotics determine the limiting behavior of their zeros which form a network of curves
inside the unit disk. An important new feature in their study  requires a detailed analysis of. the ``root dilogarithm"
given as the real part of the square root of the usual dilogarithm.
\end{abstract}

\begin{keyword}
Partition, Polynomials \sep Asymptotics \sep Dilogarithm \sep Lerch zeta function
\MSC
 11M35, 11P82, 11P55, 33B30, 30E15
\end{keyword}

\end{frontmatter}

\section{Introduction }
More than twenty years ago,  Richard Stanley gave a talk on the geometry of the roots of various polynomial sequences
where the roots   appear to lie on  complicated unknown curves in the complex plane.
The examples he emphasized   are  influential within combinatorics and include the Bernoulli polynomials \cite{ MR881502}, chromatic polynomials \cite{MR2047238}, and $q$-Catalan numbers.  
The list contains a single example  from the theory of integer partitions  \cite[Chapter 2]{MR1634067}
\[
F_n(z) = \sum_{k=1}^{n}p_k(n) z^k,
\]
which we now call the partition polynomials. Here the number $p_k(n)$ counts the total number of integer partitions of $n$ with $k$ parts.  This polynomial sequence appeared already in several areas of mathematics and physics.  
First, E.M. Wright studied $F_n(z)$ to develop the ``Wright circle method" to produce estimates for $F_n(z)$ when $z>0$ \cite{MR1575956} under the notion of ``weighted partitions."  This paper  has been an inspiration to many works in combinatorics and number theory (see \cite{MR3423418} for example). Second, Navez et al. 
discovered that $F_n(z)$ can define a probability measure called the ``Maxwell's Demon Ensemble" which approximates a Bose gas near certain temperatures \cite{Weiss:97, Grossmann:97, PhysRevLett.79.1789}.   

With Richard Stanley's talk in mind, Boyer and Goh \cite{MR2427666,MR2367410} initiated  a study of the roots of $F_n(z).$  They announced the results of  their study but they never published proofs. 
 In fact they confirmed Richard Stanley's suspicion that there are indeed  curves that attract the roots of the partition polynomials. 

For background, the Wright circle method produces an asymptotic estimate of $F_n(z)$ as $n\to \infty,$
 which requires the choice of a ``heaviest" or ``dominant" singularity of a bivariate generating function $P(z,q).$ 
 It turns out that for $z>0,$  this heaviest singularity  always occurs at $q=1$ but for general $z$ in the unit disk $\mathbb{D}$
  the heaviest singularity depends on the location of $z.$  
  Since the asymptotics for $F_n(z)$ depends primarily on the heaviest singularity, a Stokes phenomenon \cite{MR1012299} appears  forcing the  roots fall  near the Stokes/anti-Stokes lines.  This is analogous to the phenomenon observed by the well studied Szego curve \cite{MR1407500} and the Appell polynomials \cite{MR2288197,MR2776473}. 

Boyer and Goh also observed that the dilogarithm $Li_{2}(z)= \sum_{n=1}^\infty z^n /n^{2}$ plays an important role in determination of  the heaviest singularity.  In itself,  this is not unusual in integer partition theory because the dilogarithm is intricately connected to partitions (\cite{MR2471621,MR940391,MR3334080} for example) but what is unusual is that overcoming
the additional challenge of allowing $z$ be a complex number
required univalent function theory including conformal properties of the dilogarithm \cite{MR3283658,MR2743537,MR697137}.

With that, Boyer and Parry began the study of a similar polynomial sequence called the plane partition polynomials \cite{MR3370706, MR3216005, MR2880680} whose study was suggested by Stanley in conversation in 2005.
  This polynomial sequence acts in the same way as Boyer and Goh predicted 
   with the different ``phases" of the asymptotic structure of $F_n(z)$ driving the location of the roots of $F_n(z).$  It is now clear that the 
  partition polynomials are not an isolated case but part of an entire family of polynomial sequences  in combinatorics that have this Stokes phenomenon.  

Construction of a general framework for partition polynomials began with \cite{MR3280954} when Parry
 found a polynomial version of the classical Meinardus Theorem for the asymptotics of sequences whose generating functions have a special (Euler)  infinite product structure.  This structure is common in the classical study of integer partitions and given one knows the phase structure, one can determine the asymptotics of the polynomial sequence.   

This paper finishes the framework of Boyer and Parry and confirms the conjectures of Boyer and Goh.
Specifically, we will show that the same phenomenon observed by the original partition polynomials and established for the plane partition polynomials occurs  for  the partition polynomials for arithmetic progressions.

\section{Mathematical Preliminaries}

Partition polynomials have been studied in a few contexts \cite{MR3351540, MR1502817} and loosely define a family of polynomial generating functions that have an interpretation in integer partition theory.  For our purposes, we will use the following definition:

\begin{definition} 
The  partition polynomials, $F_n(z)$, are defined as the Fourier coefficients of  
\begin{equation}\label{eq:gen_fct}
1+ \sum_{n=1}^\infty F_n(z)q^n 
=
\prod_{m=1}^\infty \frac{1}{(1-zq^m)^{a_m}} 
\end{equation}
where the integer exponents $a_m$ are  either $0$ or $1$.
\end{definition}

For $S \subset {\mathbb Z}^+ $, let $a_m = \chi_S(m)$, 
then the coefficient $[z^k] F_n(z)$ counts the number of partitions of $n$ with $k$
parts that all lie in $S$.  The principal cases we study are
  $S = {\mathbb Z}^+$; that is, the parts have no restriction;
 and where the elements of $S$ satisfy a linear congruent equation modulo $p$.

We recall the notion of phase \cite[Definition 1]{MR1634067} for a
 sequence $\{ L_n(z)\}$ of functions on the open subset $U$ of the complex plane where $L_n(z)$
 is analytic on $U$ except perhaps for a branch cut.
 
 \begin{definition}
 Assume 
$U$ has a decomposition as a finite  disjoint union of open nonempty sets 
$R(m_1),  \allowbreak \cdots,  \allowbreak R(m_k)$
together with their boundaries
 such that
 \begin{equation}\label{eq:phase_definition}
 \Re L_{m_j}(z) > \max\{ L_{m}(z) : m \neq m_j \}
 \end{equation}
 for $z \in R(m_j)$.
 If  the family of sets $R(m)$
is a maximal family with these properties, we call $R(m_j)$
a  phase with phase function $L_{m_j}(z)$.
 \end{definition}

The term phase comes from statistical mechanics;  in that context a phase function
represents a metastable free energy. In our examples, $U$ will either be the complex plane $\mathbb C$
or the open unit disk $\mathbb D$.  By maximality, the phase functions are not analytic continuations of each other.

Our description of the asymptotics for  partition polynomials $\{ F_n(z)\}$ inside the open unit
disk $\mathbb D$
uses the notion of phase and phase functions.
On $\mathbb D$, we find that  there is   a sequence
$\{ L_m(z)\}$ of functions 
and  finitely many indices $m_1, m_2, \dots, m_\ell$, say,
so on each  $R(m_j)$
\[
F_n(z) \sim_X A_{n,m}(z) \exp( n^{- \beta} L_{m_j}(z) ), \quad \beta>0,
\]
where $A_{n,m}(z)$ are nonzero functions on $R(m_j)$
such that 
\[
\lim_{n\to\infty} n^{-\beta} \ln \vert  A_{n,m}(z) \vert = 0
\]
 uniformly on the compact subsets of $R(m_j)$,
$L_{m_j}(z)$ is analytic on $R(m_j)$ except perhaps for a branch cut, 
and $\beta=1/2$.

Next we recall the definition of principal object of this paper.

\begin{definition} 
Let $Z(P(z))$ denote the finite set of zeros of the polynomial $P(z)$.
For a polynomial sequence $\{ F_n(z)\}$ whose degrees go to infinity,
its  zero attractor  $A$  is the limit of $Z(F_n(z))$ in the 
 Hausdorff metric  on the non-empty compact subsets $K$ of $\mathbb{C}\cup \{\infty\}$
 \cite{MR2288197}.
\end{definition}

We now discuss fully  below the connection between phases and the zero attractor
For a partition polynomial sequence $\{ F_n(z)\}$
on the unit disk, if its
phases there are,  $R(m_1), \allowbreak \dots, \allowbreak R(m_\ell)$ say, then 
its zero attractor $A$ is simply the union of the boundaries of the phases 
\[
A = \partial R(m_1) \cup \cdots \cup \partial R(m_\ell) 
\]
with perhaps further  contributions from branch cuts if necessary.

Finally we mention that we  continue using our  notational conventions 
  in \cite{MR1634067}. 
  So $\sqrt[s]{x}$ is  the $s$ root $\exp(\frac{\log{x}}{s})$
   with the imaginary part of the logarithm defined on $(-\pi,\pi].$ 
   Both  $[x]^-$ and $\overline{x}$ denote the complex conjugate of $x.$
    Let $\{g_n(x)\}$ a sequence of functions,
     we say $g_n(x)=O_V(a_n)$ where $a_n$ a sequence of complex numbers, 
    if there exists a constant $C_V$ dependent solely on a collection of parameters $V,$ 
    such that $\vert g_n(x)\vert \leq C_V \vert a_n \vert$ as $n\to \infty.$   
    Similarly,
    we say $g_n(x)=o_V(a_n)$ for every $C_V$ dependent solely on a collection of parameters $V,$ 
    $\vert g_n(x)\vert\leq C_V \vert a_n\vert$ as $n\to \infty.$
    Absence of any $V$ indicates that the constant is uniform.  
    
    For convenience, we use the notation $e_k(z)=\exp(2\pi i z/k)$ and $e(z)=e_1(z).$

\subsection{Summary of the Meinardus Polynomial Setup  \cite{MR3280954}} 

Let $\{ a_m\}$ be the  exponent sequence  for the generating function for the partition polynomials $\{F_n(z)\}$. For each positive
integer $k$
and character of ${\mathbb Z}_k$, 
we associate the Dirichlet series $D_{t,k}(s)$ where
$t$ is an integer $1\leq t \leq k$ that indexes the character of the finite
cyclic group ${\mathbb Z}_k$ given by $j \mapsto e_k( tj)$.
We define the Dirichlet series as
\[
D_{t,k}(s) = \sum_{m=1}^\infty a_m \frac{ e_k( mt) }{ m^s } .
\]
Observe that $D_{k,k}(s)$ is the usual Dirchlet generating function for the sequence $\{ a_m \}$.
In the examples  in this paper, these Dirichlet series either   have
an analytic continuation to the complex plane as an entire function or as
a meromorphic function with a unique singularity at $s=1$, which will be a simple pole.
Further, for each positive integer $k$, we need to  introduce  two functions on the character group 
$\widehat{ {\mathbb Z}_k}$ as well as their finite Fourier transforms 
given in terms of the Dirichlet series family.
\begin{definition}\label{def:b_k,a_k}
Let $b_k$ be the $\widehat{{\mathbb Z}_k}$-Fourier transform of 
$
t \mapsto D_{t,k}(0)
$
while
$c_k$ be the $\widehat{{\mathbb Z}_k}$-Fourier transform of 
 $
 t \mapsto Res( D_{t,k}, s=1)  
 $; explicity, for $1 \leq j \leq k$,
 \[
 b_k( j ) = \frac{1}{k} \sum_{t=1}^k e_k( - t j) D_{t,k}(0),
 \quad
 c_k( j ) = \frac{1}{k} \sum_{t=1}^k e_k( - t j) \, Res( D_{t,k}(s), s=1) .
 \]
\end{definition}
The actual form of the asymptotics of $\{ F_n(z)\}$ is given in terms of these functions
on ${\mathbb Z}_k$ and its dual.
For $k,n \in {\mathbb Z}^+$ and $1\leq h <k$ relatively prime to $k$, define 
$L_{h,k}(z)$ and $\omega_{h,k,n}(z)$:
\begin{align*}
L_{h,k}(z)^2
&=
\frac{1}{k^2} \sum_{r=1}^k z^r L( z^k,2,r/k) \,  Res( D_{rh,k},s=1)
=
\sum_{j=1}^k c_k(j) Li_2( e_k( j h) z) 
 ,
\\
\omega_{h,k,n}(z)
&=
e_k( -h n )
\prod_{j=1}^k (1- e_k(h  j)   z)^{- b_k(j)} , 
\quad \Omega_{n,k }(z)= \sum_{  (h,k)=1} \omega_{h,k,n}(z) 
\end{align*}
where $L(z,s,\nu)$ is the Lerch zeta function.
When the functions $L_{h,k}(z)$ are independent of $h$, we will
suppress the $h$ dependence in the indexing when convenient.

The asymptotic results in \cite{MR3280954} depend on knowing the phases for the function sequence
$\{ L_{h,k}(z)\}$.
For simplicity of exposition, we  assume that  $L_{h,k}(z)$ are independent of $h$ and that the
phases are $R(m_1), \dots, R(m_j)$. On each phase
 $R(m)$ there are possibly two different regimes of the asymptotics on
compact subsets $X$:
\begin{align}
&F_n(z)  \label{eq:form1}
\sim_X
\frac{1}{2 \sqrt{\pi} n^{3/4}}  \Omega_{n,m}(z)    \sqrt{ L_m(z)} \exp( 2 n^{1/2} L_m(z) ),
\,
z \in X \setminus \{ z : L_m(z) \leq 0\},
\\
\label{eq:form2}
&\sim_X
2 \Re\left[  
\frac{1}{2 \sqrt{\pi} n^{3/4}}  \Omega_{n,m}(z)   \sqrt{ L_m(z)} \exp( 2 n^{1/2} L_m(z) )
\right],
\,
z \in X \cap \{ z : L_m(z) \leq 0\},
\end{align}
where the second asymptotics represent the contribution of a branch cut, if needed.

\subsection{General Results about Zero Attractor} 

We show that the zero attractor lies inside the closed unit disk $\overline{\mathbb D}$ and always contains the unit circle
for any family of partition polynomials
if their exponent sequence $\{a_n\}$ satisfies the initial condition $a_1=1$
and  contains infinitely many $1$'s.
 Of course, the more detailed structure of  the zero
attractor depends on the moduli of $\{ F_n(z)\}$ so the chief contribution to consider is $\exp( 2 n^{1/2} \Re L_m(z))$
in its asymptotics.

We restate Sokal's result in language convenient for partition polynomials.

\begin{proposition}
\label{prop:sokal}
\cite{MR2047238}
Let $\{  \phi_n(z)  \}$ be a sequence of analytic functions on 
an open connected set $U$. 
For a fixed $\beta >0$,
we assume that the sequence $ \{  \vert  \phi(z) \vert^{ {n^\beta}  }  \} $ is uniformly bounded
on the compact subsets of $U$. A point $z_0\in {\mathbb C} $ is in their zero
attractor if there exists a neighborhood $V(z_0)$ of $z_0$
with the property that there cannot exist a harmonic function $v(z)$
on $V(z_0)$ satisfying
\[
\liminf_{n\to\infty} n^{-\beta} \ln \vert \phi_n(z) \vert
\leq
v(z)
\leq
\limsup_{n\to\infty} n^{-\beta} \ln \vert \phi_n(z) \vert.
\]
\end{proposition}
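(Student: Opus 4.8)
The plan is to prove the contrapositive: assuming $z_0\notin A$, I will exhibit a disk $V$ about $z_0$ and a harmonic function $v$ on $V$ with $\liminf_n n^{-\beta}\ln|\phi_n|\le v\le\limsup_n n^{-\beta}\ln|\phi_n|$ on $V$. The engine is a Harnack-type normal families argument applied to the log-moduli $u_n:=n^{-\beta}\ln|\phi_n|$, carried out after localizing so that the $\phi_n$ are zero-free and the $u_n$ genuinely harmonic.

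Since $A$ is compact and $z_0\notin A$, put $\delta:=\operatorname{dist}(z_0,A)>0$; Hausdorff convergence $Z(\phi_n)\to A$ forces every zero of $\phi_n$ to lie within $\delta/2$ of $A$ for large $n$, hence at distance $\ge\delta/2$ from $z_0$, so $\phi_n$ is zero-free on $B(z_0,\delta/2)$ for $n\ge N$. Choosing $r>0$ with $\overline{B(z_0,r)}\subset U\cap B(z_0,\delta/2)$, the disk $B(z_0,r)$ is simply connected, so we may write $\phi_n=e^{g_n}$ with $g_n$ analytic and $u_n=n^{-\beta}\Re g_n$ harmonic on $B(z_0,r)$ for $n\ge N$. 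The uniform boundedness hypothesis furnishes $C$ with $u_n\le C$ on $\overline{B(z_0,2r/3)}$ for all large $n$, so with $K:=\overline{B(z_0,r/2)}$ the functions $h_n:=C-u_n$ are nonnegative and harmonic on $B(z_0,2r/3)\supset K$, and Harnack's inequality on the connected compact $K$ yields $c_K$ with $\sup_K h_n\le c_K\inf_K h_n$ for large $n$. This forces a dichotomy. Either $h_n(z_0)\to\infty$, whence $\sup_K h_n\to\infty$, that is, $u_n\to-\infty$ uniformly on $K$; or, along some subsequence, $h_{n_k}(z_0)$ stays bounded, hence so does $\sup_K h_{n_k}$, so $\{u_{n_k}\}$ is uniformly bounded on $K$. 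In the second case the $u_{n_k}$ form a normal family on $B(z_0,r/2)$ (being locally uniformly bounded harmonic functions), so a further subsequence converges there locally uniformly to a harmonic $v$; since $v=\lim u_{n_{k_j}}$ it lies between $\liminf_n u_n$ and $\limsup_n u_n$ on $B(z_0,r/2)$, and $V=B(z_0,r/2)$ gives the desired $v$ in this case.

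The one surviving alternative — $u_n\to-\infty$ uniformly on $K$ — is the \emph{main obstacle}: there $\liminf_n u_n=\limsup_n u_n\equiv-\infty$, so no finite-valued harmonic $v$ can satisfy the required inequalities, and one must show this cannot happen when $z_0\notin A$. A bare upper bound on $|\phi_n|$ does not rule it out, so the argument must call on more structure. For the partition polynomials at hand that structure is exactly the asymptotics \eqref{eq:form1}--\eqref{eq:form2}: on any compact subset of a phase $R(m)$ lying off $A$ one has $n^{-1/2}\ln|F_n(z)|\to 2\,\Re L_m(z)$ uniformly, a finite harmonic function, so there $u_n$ is in fact locally uniformly bounded on both sides, the divergent alternative never occurs, and the convergent case of the previous paragraph always applies. (Equivalently, one may add the non-degeneracy requirement ``$n^{-\beta}\ln|\phi_n|$ does not tend to $-\infty$ locally'' to the standing hypotheses, as is implicit in Sokal's framework.) Hence the required harmonic $v$ exists on $V=B(z_0,r/2)$, and the proposition follows.
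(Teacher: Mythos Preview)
The paper does not prove this proposition at all; it is simply restated from Sokal \cite{MR2047238} (``We restate Sokal's result in language convenient for partition polynomials'') and then invoked in Theorem~\ref{theorem:asymptotic_form}. There is thus no proof in the paper to compare your attempt against.

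That said, your Harnack/normal-families argument is the standard route and is correct up to the point you yourself flag. The divergent alternative $u_n\to-\infty$ is not merely a gap in your write-up but a genuine defect in the statement as reproduced here: with only an \emph{upper} bound on $|\phi_n|^{n^{-\beta}}$ assumed, the proposition is false. Take $\phi_n(z)=e^{-n^{2\beta}}(z-2)$ on a neighborhood of the origin; then $|\phi_n|^{n^{-\beta}}\to 0$ uniformly on compacta, $u_n\to-\infty$ everywhere away from $z=2$, so $\liminf u_n=\limsup u_n\equiv-\infty$ and no finite harmonic $v$ can be sandwiched between them, yet $Z(\phi_n)=\{2\}$ for every $n$ and the attractor is $\{2\}\not\ni 0$. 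Sokal's actual framework avoids this either by a non-degeneracy hypothesis (roughly, $\limsup_n u_n>-\infty$ on a dense set) or by working with upper-semicontinuous/subharmonic interpolants for which $v\equiv-\infty$ is admissible. Your workaround---observing that for the partition polynomials the phase asymptotics \eqref{eq:form1}--\eqref{eq:form2} give a finite two-sided limit $2\,\Re L_m(z)$ off the attractor, so the divergent branch never occurs---is exactly the extra input the paper's applications supply, and with it your argument goes through for every use the paper makes of the proposition.
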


For partition polynomials, there are two choices for the scales $\beta = 1/2$ or $1$.
In  next Proposition, we make no assumptions on the phase structions inside the unit disk.

\begin{proposition}
Let $\{ F_n(z)\}$ be a sequence of partition polynomials. Then 
\\
(a)
$\vert F_n(z) \vert^{1/\sqrt{n}}$ is uniformly bounded
on the  unit disk.
\\
(b) Uniformly on compact subsets of $\mathbb C \setminus \overline{ \mathbb D}$:
\[
\lim_{n\to\infty} \frac{1}{n} \ln \vert F_n(z)\vert 
=
\ln\vert z \vert .
\]
(c) \label{corollary:outerasym} 
Let   $\{a_m\}_{m=1}^\infty$ be an exponent sequence for the 
partition polynomial sequence $\{ F_n(z)\}$
with $a_1=1$.
 Then its zero attractor $A$ 
lies inside the closed unit disk  and $\infty \notin A$.
\end{proposition}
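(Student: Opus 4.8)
The plan is to prove the three parts in order, using each to set up the next. For part (a), I would start directly from the Euler product \eqref{eq:gen_fct}: extracting the $q^n$-coefficient as a contour integral over a circle $|q|=\rho$ with $0<\rho<1$ and bounding the integrand, one gets $|F_n(z)|\le \rho^{-n}\prod_{m=1}^\infty |1-zq^m|^{-a_m}$ for $|z|\le 1$. Since $a_m\in\{0,1\}$ and $|zq^m|\le \rho^m<1$, each factor is bounded below by $1-\rho^m$, so $\prod_m(1-\rho^m)^{-1}$ converges to a finite constant $C(\rho)$ independent of $z$ in $\overline{\mathbb D}$. Hence $|F_n(z)|^{1/\sqrt n}\le (\rho^{-n}C(\rho))^{1/\sqrt n}=\rho^{-\sqrt n}C(\rho)^{1/\sqrt n}$. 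This blows up with $n$, so I would instead take $\rho=\rho_n\to 1$ slowly — e.g. $\rho_n=1-1/\sqrt n$ — and check that $\rho_n^{-\sqrt n}$ stays bounded while $C(\rho_n)^{1/\sqrt n}\to 1$; the latter needs the standard estimate $\log\prod_m(1-\rho_n^m)^{-1}=O(1/(1-\rho_n))=O(\sqrt n)$, which gives exactly the $\sqrt n$ scale and a uniform bound on $|F_n(z)|^{1/\sqrt n}$ over $\overline{\mathbb D}$.

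For part (b), fix a compact set $K\subset \mathbb C\setminus\overline{\mathbb D}$, so $|z|\ge 1+\delta$ on $K$ for some $\delta>0$. The leading term of $F_n(z)$ as a polynomial in $z$ is $z^n$ (from the single part equal to $n$, using $a_1=1$ is not needed here but $a_m=1$ for the relevant $m$; more robustly $F_n(z)=\sum_k p_k(n)z^k$ has top degree $n$ with coefficient $1$ whenever $a_1=1$, and in general degree bounded by $n$). I would write $F_n(z)=z^n(1+\text{lower order})$ and show the lower-order terms are negligible: each coefficient $p_k(n)$ is bounded by $p(n)=e^{O(\sqrt n)}$, so $\sum_{k<n}p_k(n)|z|^{k}\le n\,e^{O(\sqrt n)}|z|^{n-1}$, whence $|F_n(z)|=|z|^n(1+O(n e^{O(\sqrt n)}/|z|))$ and $\tfrac1n\log|F_n(z)|=\log|z|+O(\sqrt n/n)\to\log|z|$ uniformly on $K$. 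Some care is needed if $a_1=0$ so that the top degree is some $N<n$ with $N\to\infty$; then the same argument runs with $z^n$ replaced by $z^N$ and one notes $\tfrac1n\log|z|^N\to\log|z|$ still, but I would simply invoke $a_1=1$ where convenient since part (c) assumes it.

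For part (c), I would combine (a), (b), and Proposition~\ref{prop:sokal}. First, no zeros escape to $\mathbb C\setminus\overline{\mathbb D}$: by (b), on any compact $K$ outside the closed disk $\tfrac1n\log|F_n(z)|\to\log|z|$, which is harmonic (indeed $>0$) there, so the hypothesis of Proposition~\ref{prop:sokal} with $\beta=1$ fails at every such point and $K\cap A=\varnothing$; since this holds for all such $K$, $A\subset\overline{\mathbb D}$. That $\infty\notin A$ follows because $F_n$ has no zeros in a fixed neighborhood of $\infty$ (its top coefficient is nonzero and the zeros all have modulus $\le$ something, e.g. by Cauchy's bound applied to $F_n$ with $a_1=1$ the zeros lie in a fixed disk — or directly from $A\subset\overline{\mathbb D}$). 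The main obstacle is the uniform bound in (a): one must pick the radius $\rho_n$ in the Cauchy estimate to tend to $1$ at precisely the right rate so that $\rho_n^{-\sqrt n}$ and the partition-type product $\prod_m(1-\rho_n^m)^{-1/\sqrt n}$ are simultaneously controlled; everything else is routine once the $\sqrt n$ scale is identified, which is exactly the scale dictated by the Meinardus/Wright asymptotics recalled above.
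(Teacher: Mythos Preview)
Your argument for (a) is correct but more elaborate than necessary: since the coefficients of $F_n$ are nonnegative and bounded termwise by those of the unrestricted partition generating function, one simply has $|F_n(z)|\le F_n(1)\le p(n)=e^{O(\sqrt n)}$ for $|z|\le 1$, which gives the bound in one line. Your Cauchy--saddle-point approach with $\rho_n=1-1/\sqrt n$ does work and recovers the same $\sqrt n$ scale, but it is not needed.

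The real gap is in (b), and it propagates to (c). Your bound $\sum_{k<n}p_k(n)|z|^{k}\le n\,e^{O(\sqrt n)}|z|^{n-1}$ is fine, but after dividing by $|z|^n$ it yields $n\,e^{O(\sqrt n)}/|z|$, which tends to $+\infty$, not to $0$. So you cannot conclude $|F_n(z)/z^n-1|=o(1)$; your displayed ``$|F_n(z)|=|z|^n(1+O(\cdot))$'' is only an upper bound, and the claimed $\tfrac1n\log|F_n(z)|=\log|z|+O(\sqrt n/n)$ has no lower half. What is missing is that the top coefficients $p_{n-j}(n)$ are \emph{small} for fixed $j$: they stabilize, $[z^{n-k}]F_n(z)=[z^k]H(z)$ for $k<\lfloor n/2\rfloor$, where $H(z)=\prod_m(1-z^m)^{-a_{m+1}}$ is a fixed nonvanishing analytic function on $\mathbb D$. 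The paper uses this to prove the stronger statement $F_n(z)/z^n\to H(1/z)$ uniformly on $\{|z|\ge r_0>1\}$, from which both (b) and (c) follow immediately. Your route to (c) also invokes Proposition~\ref{prop:sokal} in the wrong direction: as stated it is a \emph{sufficient} condition for membership in the attractor, so ``the hypothesis fails'' does not yield $z_0\notin A$. Once one has uniform convergence of $F_n(z)/z^n$ to a nonzero limit, however, the absence of zeros outside $\overline{\mathbb D}$ (and at $\infty$) is elementary and Sokal is unnecessary.
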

\begin{proof}
(a)
Since the exponents $\{ a_m\}$ form a binary $0$-$1$ sequence,   $F_n(1)$ is bounded above by  the total number $p(n)$  of the standard partitions of $n$. But
$p(n)$ has order  $\exp( \pi \sqrt{ 2n/3})$. 
\\
(b)
 Assume $\vert z \vert>1$. Then
\begin{align*}
\frac{1}{n} \ln \vert F_n(z)\vert
&=
\frac{1}{n}
n \ln \vert z \vert
+
\frac{1}{n}
\ln \left\vert 1+ ([z^{n-1}]F_n(z)) /z + \cdots + ([z^{1}]F_n(z)) / z^{n-1} \right\vert
\end{align*}
But
\begin{align*}
&
\frac{1}{n}
\ln \left\vert 1+  ([z^{n-1}]F_n(z)) /z + \cdots + ([z^{1}]F_n(z)) / z^{n-1} \right\vert
\leq \frac{1}{n} \ln F_n(1)
\\
& \qquad
\leq \frac{1}{n} \ln p(n)  \to 0
\end{align*}
as $n\to\infty$. Hence $\ln \vert F_n(z)\vert/ n \to \ln\vert z \vert$.

\noindent
(c)
Let $H(z) = \prod_{m=1}^\infty { (1-z^m)^{-a_{m+1}}}$ so $H(z)$ is analytic,  nonvanishing in the open unit disk
$\mathbb D$, and $H(0)=1$.  By the Stabilization result in \cite{MR3106102}, we find that
\[
[z^{n-k}] F_n(z) = [z^k] H(z), \quad 0 \leq k < \lfloor n/2 \rfloor .
\]
Write $H(z) = 1+ \sum_{m=1}^\infty h_m z^m$. Let $1<r_0$. For $\vert z \vert \geq r_0$, we find there is
a cancellation of the first $ \lfloor n/2 \rfloor$ terms below to give the bound:
\begin{align*}
\left\vert
\frac{ F_n(z)}{ z^n} - \prod_{m=1}^\infty ( 1- 1/z)^{ - a_{m+1}}
\right\vert
&\leq
\sum_{\ell= \lfloor n/2 \rfloor}^n  [z^{n-\ell}] F_n(z) \frac{1}{ \vert z\vert^\ell} 
+
\sum_{\ell =  \lfloor n/2 \rfloor+1}^\infty h_\ell \vert z \vert^{ - \ell} 
\\
&\leq
r_0^{ \lfloor n/2 \rfloor } F_n(1)
+
\sum_{\ell =  \lfloor n/2 \rfloor+1}^\infty h_\ell  r_0^{ - \ell} .
\end{align*}
Hence, on $\vert z \vert \geq r_0 >1$, $F_n(z)/z^n$ converges uniformly  to $H(1/z)$.
Let $\{ z_{n_k}\}$ be a convergent sequence of zeros of $F_{n_k}( z_{n_k})=0$ such that $\vert z_{n_k}\vert \geq r_0$ with limit $z^*$.
 Then $F_{n_k}( z_{n_k}) /z_{n_k}^{n_k} = 0 \to  H( 1/z^*)  \neq 0$ which ontradicts
that  $H(1/z)$ does not vanish on $\mathbb D$.
On the other hand,
 $\vert z _{n_k}\vert \to \infty$ forces $H(0)=0$  but this contradicts $H(1)=0$.

\end{proof}

\begin{theorem}\label{theorem:asymptotic_form}
Let $\{ F_n(z)\}$ be a partition polynomial sequence
whose phases are the disjoint nonempty open subsets  are 
$R(m_1), \cdots, R(m_\ell)$ inside $\mathbb D$.
Assume the functions $\Omega_{n,m_j}(z) \neq 0$ for $0 \neq z \in R(m_j)$.
\\
(a)
 Then any $z_0 \in \partial R(m_j) \cap {\mathbb D}$
is in the zero attractor of $\{ F_n(z) \}$.
\\
(b) No nonzero point in $R(m)$ lies in the zero attractor.
\\
(c) The unit circle lies in the zero attractor.
\end{theorem}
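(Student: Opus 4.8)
The plan is to derive all three parts from Sokal's criterion (Proposition~\ref{prop:sokal}) together with the asymptotic dichotomy \eqref{eq:form1}--\eqref{eq:form2}, after one preliminary remark: on every phase $R(m)$ one has $\Re L_m(z)>0$. Indeed, each member of the family $\{L_m\}$ is a principal square root, hence has non-negative real part, so on $R(m)$ the phase inequality \eqref{eq:phase_definition} gives $\Re L_m(z)>\max_{m'\neq m}\Re L_{m'}(z)\ge 0$. In particular the branch-cut set $\{z:L_m(z)\le 0\}$ misses $R(m)$, so only the regime \eqref{eq:form1} occurs on compact subsets of $R(m)$, and using the hypothesis $\Omega_{n,m}\neq 0$ together with $\lim_n n^{-1/2}\ln|A_{n,m}(z)|=0$ we obtain $n^{-1/2}\ln|F_n(z)|\to 2\Re L_m(z)$, hence $n^{-1}\ln|F_n(z)|\to 0$, both uniformly on compact subsets of $R(m)$. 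Part (b) is then immediate: for $z_0\in R(m)\setminus\{0\}$ pick a closed disk $X\subset R(m)\setminus\{0\}$ about $z_0$ and $\delta>0$ with $\Re L_m\ge\delta$ on $X$; the first convergence forces $|F_n(z)|\ge e^{\delta\sqrt n}$ on $X$ for all large $n$, so $F_n$ is zero-free near $z_0$ eventually and $z_0\notin A$ --- and this is exactly where the hypothesis $\Omega_{n,m}\neq 0$ is used.

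For part (a) I would invoke Proposition~\ref{prop:sokal} with $\beta=1/2$ and $U=\mathbb D$, whose growth hypothesis holds because $n^{-1/2}\ln|F_n(z)|$ is locally bounded above on $\mathbb D$ (shown above). Call $z_0\in\partial R(m_j)\cap\mathbb D$ \emph{regular} if $\Re L_{m_j}(z_0)>0$; for such a $z_0$ the function $L_{m_j}^2$ is holomorphic near $z_0$ and its value there is neither $0$ nor a negative real, so $L_{m_j}$, and with it $2\Re L_{m_j}$, is harmonic on a ball $V$ about $z_0$. One verifies that $z_0$ also lies on $\partial R(m_i)$ for some $i\neq j$ (the closures of the phases cover $\mathbb D$), so $R(m_j)\cap V$ and $R(m_i)\cap V$ are non-empty open sets. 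Were a harmonic $v$ on $V$ to satisfy $\liminf_n n^{-1/2}\ln|F_n|\le v\le\limsup_n n^{-1/2}\ln|F_n|$, then on $R(m_j)\cap V$ both envelopes equal $2\Re L_{m_j}$, forcing $v=2\Re L_{m_j}$ there and hence $v\equiv 2\Re L_{m_j}$ on $V$ by the identity principle for harmonic functions; but then on $R(m_i)\cap V$ the phase inequality yields $\limsup_n n^{-1/2}\ln|F_n(z)|=2\Re L_{m_i}(z)>2\Re L_{m_j}(z)=v(z)$, contradicting $v\ge\limsup$. So no such $v$ exists and every regular $z_0$ belongs to $A$; since $A$ is closed and the non-regular points of $\partial R(m_j)$ (where $\Re L_{m_j}=0$) are nowhere dense in it, all of $\partial R(m_j)\cap\mathbb D$ lies in $A$.

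For part (c) I would invoke Proposition~\ref{prop:sokal} with $\beta=1$ and $U=\mathbb C$, the growth hypothesis holding because $n^{-1}\ln|F_n(z)|$ is locally bounded above. Fix $z_0$ with $|z_0|=1$ and a ball $V$ about $z_0$ with $0\notin V$. If $v$ is harmonic on $V$ and $\liminf_n n^{-1}\ln|F_n|\le v\le\limsup_n n^{-1}\ln|F_n|$ on $V$, then on $V\setminus\overline{\mathbb D}$ both envelopes equal $\ln|z|$ (using the outer asymptotic $n^{-1}\ln|F_n(z)|\to\ln|z|$ established above), so $v=\ln|z|$ there; as $\ln|z|$ is harmonic on $V$, the identity principle gives $v\equiv\ln|z|$ on $V$, in particular $v(z)=\ln|z|<0$ for $z\in V\cap\mathbb D$. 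But $V\cap\mathbb D$ is non-empty and open while $\bigcup_i\partial R(m_i)$ has empty interior, so $V\cap\mathbb D$ contains a point $z_1$ of some phase $R(m)$, and there $\liminf_n n^{-1}\ln|F_n(z_1)|=0>\ln|z_1|=v(z_1)$, contradicting $\liminf\le v$. Hence no such $v$ exists and $z_0\in A$.

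The step I expect to be the main obstacle is the treatment of the non-regular boundary points in part (a): at $z_0\in\partial R(m_j)$ with $\Re L_{m_j}(z_0)=0$ the dominant phase function $L_{m_j}$ sits at a branch point or on its own branch cut, hence does not extend harmonically across $z_0$, the regime \eqref{eq:form2} governs one side, and the relevant upper envelope of $n^{-1/2}\ln|F_n|$ is merely subharmonic, so the identity-principle shortcut collapses. Showing that these points are nowhere dense in $\partial R(m_j)$ --- and, more broadly, locating the ``branch cut'' contributions to the zero attractor mentioned in the statement --- is precisely what forces the detailed analysis of the root dilogarithm $\Re\sqrt{Li_2}$ that is the analytic core of the paper.
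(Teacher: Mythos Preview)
Your approach is essentially the paper's: Sokal's criterion at scale $\beta=1/2$ for (a), the non-vanishing of the leading asymptotic term for (b), and Sokal at scale $\beta=1$ for (c), matching the inside limit $0$ against the outside limit $\ln|z|$. Your write-up is in fact more explicit than the paper's own proof, which for (a) simply invokes the phrase ``not harmonic continuations of each other'' without spelling out the identity-principle argument, and for (c) only records the relevant limit $\lim_{n}n^{-1}\ln|F_n(z)|$ on a punctured neighbourhood of $z^*$ without writing out the contradiction you supply.

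Two small points. In (a), the clause ``contradicting $v\ge\limsup$'' should read ``contradicting $v\ge\liminf$'': Sokal's hypothesis is $\liminf\le v\le\limsup$, and on $R(m_i)\cap V$ you have shown $v=2\Re L_{m_j}<2\Re L_{m_i}=\liminf$. Since $\liminf=\limsup$ there, the slip is harmless, but worth correcting. Second, your handling of the non-regular boundary points (where $\Re L_{m_j}=0$) via closure and nowhere-density is a genuine addition over the paper's argument; the paper does not isolate this issue and your remark that it is exactly what drives the root-dilogarithm analysis in the appendix is accurate.
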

\begin{proof}
(a)
To apply Sokal's result  (see Proposition \ref{prop:sokal}), we need to consider the limit
\[
\lim_{n\to\infty} \frac{1}{ \sqrt{n}} \ln \vert F_n(z) \vert = 2 \Re L_m(z), \quad z \in R(m),
\]
that holds uniformly on compact subsets of the phase $R(m)$. By assumption, $ \Re L_m(z) $ is harmonic on $R(m)$ and
for $\ell \neq  k$, $  \Re L_\ell(z) $ and $\Re L_k(z)$ are not harmonic continuations of each other. 
Hence, for $z_0 \in {\mathbb D} \cap \partial R(m)$, there cannot be a harmonic function $v(z)$ in a neighborhood
of $z_0$ such that $\liminf \ln \vert F_n(z) \vert \leq v(z) \leq \limsup \ln \vert F_n(z)  \vert$. We conclude that $z_0$ lies in the
zero attractor.

\noindent
(b) By assumption, we know that
\[
\lim_{n\to \infty} n^{-3/4} F_n(z)  \exp( -2 n^{1/2} L_m(z) ) = \frac{1}{2 \sqrt{\pi} }  \Omega_{n,m}(z)    \sqrt{ L_m(z)}
\]
uniformly on the compact subsets of $R(m)$. Further, this limit is nonzero by construction. Hence $z$ cannot lie
in the zero attractor.

\noindent
(c)
Let $z^*$ be a point on the unit circle. Consider the open disk  $V$ with center $z^*$ and radius 
$r>0$.  
We consider the limit
\[
\lim_{n\to\infty} \frac{1}{n} \ln \vert F_n(z) \vert, \quad z \in V .
\]
\end{proof}

The above theorem  leads to an algorithm for drawing zero attractors.   
Locally,  the boundary of a  phase is a portion of an
integral curve of the  differential
equation:
\begin{align*}
\frac{dy}{dx}  
&  =
\frac
{\Re
\lbrack
L_{k}^{\prime}(x+iy)-L_{\ell}^{\prime}(x+iy)]}
{\Im
\lbrack L_{k}^{\prime}(x+iy)-L_{\ell}^{\prime}(x+iy)]},\label{eq:diff_eqn}
\end{align*}
where its  initial
condition is found by solving the appropriate single non-linear equation with a
specified radius or angle; typically on the unit circle, say $\Re L_k(e^{it}) = \Re L_\ell(e^{it})$.

For chromatic polynomials, the analogue of the phases is the equimodular set \cite{biggs}
which can also be described by means of  integral curves but of a single differential equation.

\section{A New Special Function: The Root Dilogarithm \label{section:fk}}

 The determination of the phases for the families of partition polynomials in this paper rests on finding the bounds among
 the family of functions given through the dilogarithm. 
 
 \begin{definition}\label{def:root_dilog}
 The root dilogarithms are the functions  given by
\begin{equation} \label{eq:f_k}
f_{k}(z)=
\frac{1}{k}\Re\left[ \sqrt{{\mathrm{Li}}_{2}(z^{k})}\right] 
\end{equation}
where the square root is chosen as nonnegative and $k$ is a positive integer.
\end{definition}

In general, the study of $f_k(z)$ is a lengthy tangent which we leave to the appendix.  
We will state here what we need and refer the reader to the appendix for their proofs. These results come in two kinds.  
First will be the calculus of $f_1(z)$ on radial lines and circles.

\begin{proposition}\label{f1calculus}
The function $t\to f_1(it)$ is a positive, increasing, concave function of $t\in (0,1).$
\end{proposition}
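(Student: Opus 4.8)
The plan is to compute $\mathrm{Li}_2(it)$ along the imaginary axis and extract the real part of its square root explicitly, then study the resulting one-variable function by elementary calculus. First I would write, for $t\in(0,1)$,
\[
\mathrm{Li}_2(it)=\sum_{n=1}^\infty \frac{(it)^n}{n^2}
=\left(\sum_{m\ge 1}\frac{(-1)^m t^{2m}}{(2m)^2}\right)+i\left(\sum_{m\ge 0}\frac{(-1)^m t^{2m+1}}{(2m+1)^2}\right)
=: -u(t)+i\,v(t),
\]
where $u(t)=-\tfrac14\mathrm{Li}_2(-t^2)\ge 0$ and $v(t)=\operatorname{Ti}_2(t)>0$ is the inverse-tangent integral, both real-analytic on $(0,1)$. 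Since $\mathrm{Li}_2(it)$ lies in the open upper half plane (as $v(t)>0$), its principal square root lies in the open first quadrant, so $f_1(it)=\Re\sqrt{\mathrm{Li}_2(it)}>0$, which gives positivity. The standard formula for the real part of a principal square root yields
\[
f_1(it)=\Re\sqrt{-u(t)+iv(t)}
=\frac{1}{\sqrt2}\sqrt{\,\sqrt{u(t)^2+v(t)^2}-u(t)\,}\,.
\]

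Next I would establish monotonicity. It suffices to show the quantity under the outer square root, namely $g(t):=\sqrt{u(t)^2+v(t)^2}-u(t)$, is increasing. Writing $r(t)=|\mathrm{Li}_2(it)|=\sqrt{u^2+v^2}$, we have $g=r-u$ and $g'=\tfrac{uu'+vv'}{r}-u'=\tfrac{u'(u-r)+vv'}{r}$. Here $u-r\le 0$ and $u'\ge 0$ (since $u(t)=-\tfrac14\mathrm{Li}_2(-t^2)$ is increasing), and $v,v'>0$; so the sign is not immediate and this is where the real work lies. A cleaner route: differentiate $f_1(it)$ directly using the chain rule on $\sqrt{\mathrm{Li}_2(w)}$. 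Since $\tfrac{d}{dw}\mathrm{Li}_2(w)=-\tfrac{\log(1-w)}{w}$, along $w=it$ one gets
\[
\frac{d}{dt}f_1(it)=\Re\!\left[\frac{-i\,\log(1-it)/(it)}{2\sqrt{\mathrm{Li}_2(it)}}\right]
=\Re\!\left[\frac{-\log(1-it)/t}{2\sqrt{\mathrm{Li}_2(it)}}\right].
\]
Now $-\log(1-it)=\tfrac12\log(1+t^2)+i\arctan t$ has positive real and imaginary parts, hence lies in the first quadrant, and $1/\sqrt{\mathrm{Li}_2(it)}$ also lies in the first quadrant (reciprocal of a first-quadrant number). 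The product of two first-quadrant complex numbers has argument in $(0,\pi)$, so its real part could be negative; however the argument of $-\log(1-it)$ is small for small $t$ and the argument of $1/\sqrt{\mathrm{Li}_2(it)}$ is at most $\pi/4$ in modulus, and a quantitative estimate of both arguments (showing their sum stays in $(-\pi/2,\pi/2)$) forces the real part positive. Verifying that argument bound throughout $(0,1)$ is the main obstacle; I expect it reduces to showing $\arg\mathrm{Li}_2(it)<\pi/2-2\arctan t$ or a comparable inequality, which can be checked by a monotonicity argument on $\arg\mathrm{Li}_2(it)$ together with the endpoint values.

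Finally, concavity: with the explicit formula $f_1(it)=\tfrac{1}{\sqrt2}\sqrt{g(t)}$, we have $f_1'' = \tfrac{1}{\sqrt2}\cdot\tfrac{2gg'' - (g')^2}{4 g^{3/2}}$, so concavity is equivalent to $2g(t)g''(t)\le g'(t)^2$ on $(0,1)$. Since $g$, $g'$, $g''$ are built from the power series of $u$ and $v$ with controlled signs ($u$ convex increasing, $v$ concave increasing near the relevant range, as follows from their Taylor coefficients), I would expand near $t=0$ to fix the sign and then argue the inequality persists; alternatively, going back to the $\Re[\,\cdot\,]$ representation of $f_1'(it)$ above and differentiating once more expresses $f_1''(it)$ as $\Re$ of an explicit meromorphic expression in $t$, whose negativity follows from the same first-quadrant/argument analysis. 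The endpoint behavior ($f_1(i\cdot 0)=0$, and $f_1(it)\to f_1(i)=\tfrac{1}{\sqrt2}\sqrt{\sqrt{\pi^4/144+C^2}+\pi^2/24}$ with $C$ Catalan's constant, finite) confirms the picture is consistent with a positive increasing concave function. I expect the concavity step to be the most computation-heavy, but structurally no harder than the monotonicity step once the argument estimates for $\mathrm{Li}_2(it)$ are in hand.
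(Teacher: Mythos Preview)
Your overall strategy---compute $\mathrm{Li}_2(it)=-u(t)+iv(t)$ explicitly and analyze $\Re\sqrt{\,\cdot\,}$ via argument estimates---is exactly the paper's route, and your positivity argument is correct. But the monotonicity paragraph contains errors that would derail the proof. The number $\sqrt{\mathrm{Li}_2(it)}$ lies in the first quadrant, so its reciprocal $1/\sqrt{\mathrm{Li}_2(it)}$ lies in the \emph{fourth} quadrant, not the first; in fact $\arg\bigl(1/\sqrt{\mathrm{Li}_2(it)}\bigr)=-\theta(t)/2$ with $\theta(t)=\arg\mathrm{Li}_2(it)\in(\pi/2,\pi)$, so its argument has modulus \emph{at least} $\pi/4$, not at most. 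Likewise your proposed inequality $\arg\mathrm{Li}_2(it)<\pi/2-2\arctan t$ is vacuously false, since the left side always exceeds $\pi/2$ (because $\Re\mathrm{Li}_2(it)<0$) while the right side is below $\pi/2$. The fix is easy once stated correctly: $-\log(1-it)$ has argument in $(0,\pi/2)$ and $\theta(t)/2\in(\pi/4,\pi/2)$, so the quotient has argument in $(-\pi/2,\pi/4)$ and hence positive real part. The paper carries this out in polar form, reducing the sign of $f_1'(it)$ to that of $\sin(\theta/2)\arctan t-\tfrac12\cos(\theta/2)\ln(1+t^2)$ and bounding it below by $\tfrac{\sqrt2}{2}\int_0^t\frac{1-s}{1+s^2}\,ds\ge0$.

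The more serious gap is concavity. Your sketch (either $2gg''\le(g')^2$ or ``same first-quadrant analysis'' on the second derivative) does not close, because the second derivative brings in $\cos(3\theta/2)$ and $\sin(3\theta/2)$, and the crude range $\theta\in(\pi/2,\pi)$ gives $3\theta/2\in(3\pi/4,3\pi/2)$, where $\sin(3\theta/2)$ changes sign. The missing lemma---and the heart of the paper's proof---is that $\theta(t)=\arg\mathrm{Li}_2(it)$ is \emph{increasing} on $[0,1]$ (checked directly from the signs of $\dot u,\ddot u,\dot v,\ddot v$), so $\theta(t)\in[\pi/2,\theta(1)]$ with $\theta(1)=\pi-\arctan(48G/\pi^2)<2\pi/3$. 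This sharper range forces $3\theta/2\in[3\pi/4,\pi]$, giving $\sin(3\theta/2)>0$ and $\cos(3\theta/2)<0$; the paper then splits $f_1''(it)=\Re(g_1+g_2)$ and shows each piece is negative using these sign patterns together with elementary integral inequalities like $(\arctan t)^2>\tfrac14(\ln(1+t^2))^2$. Without pinning down the range of $\theta$ via its monotonicity, neither of your proposed concavity routes will go through. (A minor point: your endpoint value $f_1(i)$ is also miscomputed---the correct expression has $-\pi^2/48$ under the outer root and $\pi^4/2304$ under the inner one.)
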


On circles, $f_k(re^{it})$ behaves in a cosine like fashion.  

\begin{proposition}
\label{prop:decreasing}
For a fixed value of $r\in (0,1],$ the function 
$t \mapsto f_{1}(re^{it})$ is decreasing on $[0,\pi ]$.
\end{proposition}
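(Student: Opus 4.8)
\textbf{Proof proposal for Proposition \ref{prop:decreasing}.}

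The plan is to reduce the claim to a monotonicity statement about $\Re\sqrt{\mathrm{Li}_2(w)}$ as $w = r^k e^{ikt}$ traverses, for $t \in [0,\pi]$, an arc of the circle $|w| = r^k$ from the positive real point $r^k$ to the point $-r^k$ (for $k=1$ this is just the upper half of the circle $|w|=r$). Write $\mathrm{Li}_2(re^{i\theta}) = \rho(\theta) e^{i\psi(\theta)}$ with $\rho > 0$ on the relevant range. Then $\Re\sqrt{\mathrm{Li}_2(re^{i\theta})} = \sqrt{\rho(\theta)}\,\cos(\psi(\theta)/2)$, so I must show this product is nonincreasing in $\theta$ on $[0,\pi]$. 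Differentiating, the sign of the derivative is governed by $\tfrac{\rho'}{2\rho}\cos(\psi/2) - \tfrac{\psi'}{2}\sin(\psi/2)$, and since $\cos(\psi/2) \ge 0$ here (the argument of $\mathrm{Li}_2$ on the upper half disk stays in $[0,\pi)$, so $\psi/2 \in [0,\pi/2)$), it suffices to prove $\rho' \le 0$ and $\psi' \ge 0$ on $(0,\pi)$ — i.e. that along the circle the modulus of the dilogarithm decreases and its argument increases.

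To get these two facts I would use the logarithmic derivative of $\mathrm{Li}_2$ along the circle. From $\mathrm{Li}_2'(w) = -\log(1-w)/w$ we get, with $w = re^{i\theta}$,
\[
\frac{d}{d\theta}\log\mathrm{Li}_2(w) = \frac{i w\, \mathrm{Li}_2'(w)}{\mathrm{Li}_2(w)} = \frac{-i\log(1-w)}{\mathrm{Li}_2(w)}.
\]
The real part of this expression is $(\log\rho)' = \rho'/\rho$ and the imaginary part is $\psi'$. So I need: $\Re\!\big[{-i\log(1-w)}/{\mathrm{Li}_2(w)}\big] \le 0$ and $\Im\!\big[{-i\log(1-w)}/{\mathrm{Li}_2(w)}\big] \ge 0$ for $w$ on the open upper semicircle $|w| = r$, $0<\arg w<\pi$. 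Equivalently, writing $A = \log(1-w)$ and $B = \mathrm{Li}_2(w)$, I want $-iA\bar B$ to lie in the closed second quadrant (nonpositive real part, nonnegative imaginary part), i.e. $A\bar B$ lies in the closed third quadrant, i.e. $\arg(A\bar B) \in [\pi/2, \pi]$ up to the usual branch bookkeeping. The most efficient route is probably to control $\arg A$ and $\arg B$ separately on the upper semicircle: for $w$ in the upper half disk, $1-w$ lies in the lower half plane shifted, so $\log(1-w)$ has negative imaginary part and one can pin down $\arg\log(1-w)$ fairly precisely; and $\mathrm{Li}_2(w) = \sum w^n/n^2$ maps the upper half disk into a region whose argument is well-understood (it lies in the upper half plane, argument between $0$ and $\pi$, and in fact one has the sharper conformality statements on $\mathrm{Li}_2$ cited in the introduction). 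Combining the two argument bounds gives the quadrant location of $-iA\bar B$.

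The main obstacle I anticipate is making the argument bounds on $\log(1-w)$ and on $\mathrm{Li}_2(w)$ simultaneously tight enough: each alone is an easy half-plane statement, but their product must land in a \emph{quarter} plane, so I need roughly $\pi/2$ worth of combined slack, which is exactly on the boundary of what the crude half-plane bounds give. I expect the fix is to use the integral representation $\mathrm{Li}_2(w) = -\int_0^w \log(1-u)\,du/u$ to relate $\arg\mathrm{Li}_2(w)$ directly to $\arg\log(1-w)$ along the radial path, showing the two arguments are correlated in just the way needed — essentially an averaging argument showing $\arg\mathrm{Li}_2(w)$ is a suitable weighted mean of the values of $\arg(-\log(1-u))$ for $u$ between $0$ and $w$, which then lies in the required range. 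An alternative, if the direct estimate proves stubborn, is to verify the two inequalities $\rho'\le 0$, $\psi'\ge 0$ by a continuity/degree argument: check they hold near $\theta = 0$ (where everything is real-analytic and computable), and show the boundary cases $\Re[-iA\bar B] = 0$ or $\Im[-iA\bar B]=0$ cannot occur for $0<\theta<\pi$ by ruling out the corresponding algebraic conditions on $w$. Either way the endpoint behavior at $r=1$ should be handled by continuity in $r$ from the already-established interior case, since $\mathrm{Li}_2$ extends continuously to $\overline{\mathbb D}$.
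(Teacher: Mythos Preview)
Your polar decomposition $f_1(re^{it}) = \sqrt{\rho(t)}\cos(\psi(t)/2)$ and the reduction to showing $\rho' \le 0$ and $\psi' \ge 0$ is exactly what the paper does. The divergence is in how these two monotonicity facts are established, and this is where your proposal has a genuine gap that you yourself flag: the quadrant argument for $-iA\bar B$ requires $\pi/2$ more precision than the crude half-plane bounds on $\arg\log(1-w)$ and $\arg\mathrm{Li}_2(w)$ provide, and neither your averaging idea via the integral representation nor the continuity/degree alternative is actually carried out. These are not routine completions; the averaging argument in particular would need a careful monotonicity statement about $\arg(-\log(1-u)/u)$ along the arc, which is essentially as hard as the original problem.

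The paper sidesteps both difficulties by invoking two external results. For $\psi' \ge 0$ (argument increasing), note that this is literally the star-likeness condition $\Re\!\big[z\,\mathrm{Li}_2'(z)/\mathrm{Li}_2(z)\big] > 0$, and the paper cites Lewis's theorem that every polylogarithm $\mathrm{Li}_s$ with $s>0$ is univalent and star-like on $\mathbb D$. For $\rho' \le 0$ (modulus decreasing), the paper appeals to a theorem of Fej\'er: if $h(z)=\sum c_n z^n$ has $c_n>0$ and the fourth differences $\Delta^4(r^n c_n)$ are nonnegative, then $t\mapsto|h(re^{it})|$ is decreasing on $[0,\pi]$. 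The condition $\Delta^4(r^n/n^2)\ge 0$ is verified by recognizing $r^n/(n+1)^2$ as a Hausdorff moment sequence for the density $-(1/r)\ln(x/r)$ on $[0,1]$, whence complete monotonicity. So what looked like a delicate joint estimate on $\log(1-w)$ and $\mathrm{Li}_2(w)$ dissolves into two clean citations; your direct route, if it can be made to work, would amount to reproving special cases of Lewis and Fej\'er.
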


Second are root dilogarithm dominance facts that we will use extensively throughout our examples.

\begin{theorem} \label{fkdomanancetheorem}
For $0 < \vert z \vert \leq 1$, 
\begin{enumerate}
\item 
$ f_k(z)\le f_k(\vert z \vert)\le f_2(\vert z \vert) < f_1(z),$  $k\geq 2,$  $\vert\arg z\vert \le \pi/3.$
\item 
$ f_k(z)\le f_k(\vert z \vert) < f_1(z),$  $k\geq 3,$  $\vert\arg z\vert \le \pi/2$,
\item 
$ f_k(z) < f_1(z)$, $k \geq 2$, $0 \leq  \arg z \leq \pi/2$,
\item
$ f_k(z) < \max[  f_1(z), f_2(z), f_3(z)],$   $  k \geq 4$, $\pi/2 \leq \arg z \leq \pi$,
\end{enumerate}
\end{theorem}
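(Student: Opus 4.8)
The plan is to establish the four dominance inequalities by reducing each to properties of the single function $f_1$ on rays together with comparisons between $f_1$, $f_2$, $f_3$ and the general $f_k$. The unifying observation is that $f_k(z) = \frac{1}{k}\Re\sqrt{\mathrm{Li}_2(z^k)} = \frac{1}{k}\Re\sqrt{\mathrm{Li}_2((|z|e^{i\arg z})^k)}$, so that $f_k(z)$ only depends on $|z|$ and $k\arg z$; writing $r = |z|$ and $\theta = \arg z$, we have $f_k(re^{i\theta}) = \frac{1}{k} g(r^k, k\theta)$ where $g(\rho,\phi) = \Re\sqrt{\mathrm{Li}_2(\rho e^{i\phi})}$. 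First I would record two monotonicity facts that do most of the work: (i) for fixed $r$, the map $t\mapsto g(r,t) = k f_1(re^{it/k})\big|_{k=1}$ is decreasing on $[0,\pi]$ — this is exactly Proposition~\ref{prop:decreasing} applied at radius $r$; and (ii) the map $r\mapsto g(r,0) = \sqrt{\mathrm{Li}_2(r)}$ is increasing on $(0,1]$, which is elementary since $\mathrm{Li}_2$ is increasing and positive on $(0,1)$. From (i) and (ii) the chain $f_k(z)\le f_k(|z|)$ in items (1)--(2) is immediate: $f_k(z) = \frac1k g(r^k, k\theta)$, and since $0\le k\theta\le\pi$ in those regimes (that is exactly what the hypotheses $|\arg z|\le\pi/3$ with $k\ge 2$ "almost" give — see the obstacle paragraph), decreasingness in the angle yields $g(r^k,k\theta)\le g(r^k,0) = k f_k(|z|)$.

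Next I would handle the radius comparison $f_k(|z|)\le f_2(|z|)$ (for $k\ge 2$) and $f_2(|z|) < f_1(z)$. For the first, since we are on the positive real axis it suffices to show $r\mapsto \frac1k\sqrt{\mathrm{Li}_2(r^k)}$ is decreasing in $k$ for fixed $r\in(0,1)$; equivalently that $k\mapsto k^{-2}\mathrm{Li}_2(r^k)$ is decreasing, which follows from the series $\mathrm{Li}_2(r^k) = \sum_{n\ge 1} r^{kn}/n^2$ and the termwise estimate $k^{-2} r^{kn} \le \ell^{-2} r^{\ell n}$ for $k\ge\ell$ (here $r<1$, so $r^{kn}\le r^{\ell n}$, and $k^{-2}\le \ell^{-2}$). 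The strict inequality $f_2(|z|) < f_1(z)$ for $|\arg z|\le\pi/3$ is the genuinely delicate piece: we need $\frac12\sqrt{\mathrm{Li}_2(r^2)} < \Re\sqrt{\mathrm{Li}_2(re^{i\theta})}$. I would prove the stronger pointwise claim $\frac12\sqrt{\mathrm{Li}_2(r^2)} < f_1(re^{i\theta})$ by combining the concavity/positivity information of Proposition~\ref{f1calculus} (which controls $f_1$ on the imaginary axis) with Proposition~\ref{prop:decreasing}, reducing to the worst angle $\theta = \pi/3$: one shows $f_1(re^{i\pi/3}) > \frac12 f_1(r^2)\cdot(\text{something})$ — more precisely, that at the boundary angle the two curves $t\mapsto f_1(re^{it})$ and $r\mapsto \frac12 f_2(r)$ do not cross inside the disk, which can be checked by a single-variable analysis of $\Re\sqrt{\mathrm{Li}_2(re^{i\pi/3})} - \frac12\sqrt{\mathrm{Li}_2(r^2)}$ on $(0,1]$, verifying it is positive at $r=1$ (where $\mathrm{Li}_2(1) = \pi^2/6$ and $\mathrm{Li}_2(e^{i\pi/3})$ has a known value by the Bloch--Wigner/special-value identities) and has no interior zero.

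For item (2), the same scheme applies but now $|\arg z|\le\pi/2$ and $k\ge 3$, so $k\theta$ can exceed $\pi$; however $k\theta \le k\pi/2$, and since $g(\rho,\cdot)$ is $2\pi$-antiperiodic-ish only through $\Re\sqrt{\cdot}$ one must instead argue that $|g(\rho,\phi)| \le g(\rho,0)$ for all real $\phi$ — i.e. the maximum of $\Re\sqrt{\mathrm{Li}_2(\rho e^{i\phi})}$ over $\phi$ is attained at $\phi=0$ — which I would get from the fact that $\mathrm{Li}_2(\rho e^{i\phi})$ has largest real part and the square root respects this after checking the branch stays in the right half-plane; then $f_k(z)\le f_k(|z|) < f_1(z)$ follows for $k\ge 3$ because $f_k(|z|)\le f_3(|z|)$ and one shows $f_3(r) < f_1(re^{i\theta})$ for $|\theta|\le\pi/2$ directly (the gap between $f_1$ and $f_3$ is larger than between $f_1$ and $f_2$, so this is easier than the $\pi/3$ case). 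Item (3) is then an interpolation: for $k=2,3$ it is covered by (1)--(2) on the sub-range $\arg z\le\pi/3$ resp. $\pi/2$, and for $k\ge 4$ with $\arg z\le\pi/2$ it follows from (2). Item (4) is the hardest and I would treat it last: for $\pi/2\le\arg z\le\pi$ and $k\ge 4$, write $\theta\in[\pi/2,\pi]$ and note $k\theta\in[2\pi,k\pi]$; here one cannot beat all of $f_1,f_2,f_3$ with $f_1$ alone, so the strategy is to partition $[\pi/2,\pi]$ into arcs on which one of $f_1,f_2,f_3$ dominates — e.g. near $\arg z = \pi$, $f_2(z) = \frac12\Re\sqrt{\mathrm{Li}_2(z^2)}$ with $z^2$ near the positive axis is large, so $f_2$ dominates; near $\arg z$ slightly above $\pi/2$, $f_1$ or $f_3$ wins — and on each arc invoke the angular-decrease principle plus the radius comparison to control $f_k$ by $\frac1k g(r^k, k\theta \bmod 2\pi \text{ reflected into } [0,\pi])$, bounding that in turn by the appropriate $f_j$. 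I expect the main obstacle to be precisely this case analysis in item (4): tracking which of the three "base" functions dominates as $\arg z$ sweeps from $\pi/2$ to $\pi$ requires understanding the crossing structure of $f_1,f_2,f_3$ near the boundary of the disk, and the verification that $f_k$ for large $k$ never pokes above the upper envelope is where the $2\pi$-folding of the angle $k\theta$ and the branch-cut behavior of $\sqrt{\mathrm{Li}_2}$ (the hypothesis $L_m(z)\le 0$ regime) interact most intricately — this is the reason the authors defer the detailed study of $f_k$ to an appendix.
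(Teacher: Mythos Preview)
Your broad outline captures the flavor of the argument, but there is a concrete gap in item~(3) and you are missing the main structural tool the paper uses for items~(3) and~(4).

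\textbf{The gap in item (3).} You write that ``for $k=2,3$ it is covered by (1)--(2) on the sub-range $\arg z\le\pi/3$ resp.\ $\pi/2$''. But item~(1) only controls $f_2(z)<f_1(z)$ for $|\arg z|\le\pi/3$; you give no argument for $f_2(z)<f_1(z)$ when $\pi/3<\arg z\le\pi/2$, and your later remarks do not fill this. The paper handles precisely this case by a different mechanism: since $f_1$ and $f_2$ are harmonic on the open quarter disk (no branch cuts there), the maximum principle reduces $f_2(z)<f_1(z)$ to checking on the two radial segments and on the boundary arc $e^{it}$, $t\in[0,\pi/2]$. The arc is then split at $t=\pi/4$ and handled by the angular monotonicity (Proposition~\ref{prop:decreasing}) together with item~(1). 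Your proposal never invokes harmonicity or the maximum principle, and without it this case does not close.

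\textbf{The missing engine in item (4).} Your plan of ``partitioning $[\pi/2,\pi]$ into arcs \dots\ and on each arc invoke the angular-decrease principle plus the radius comparison'' is the right shape, but the angle-folding heuristic by itself does not control $f_k$ for all $k\ge4$. The paper proceeds in two stages. First, explicit two-sided bounds $\frac{\pi^2}{12}|z|\le|Li_2(z)|\le\frac{\pi^2}{6}|z|$ give $f_k(r)\le\frac{\pi}{k\sqrt6}r^{k/2}$, and a separate lower bound $f_1(ir)>\frac{\pi\sqrt2}{8}\sqrt r$ (proved from $\Re Li_2(ir)=\tfrac14 Li_2(-r^2)$) yields the radial comparisons $f_3(r)<f_1(ir)$, $f_5(r)<f_2(re^{i3\pi/4})$, $f_7(r)<f_3(ir)$; by angular monotonicity these dispose of all $k\ge7$ on $[\pi/2,3\pi/4]$ and all $k\ge5$ on $[3\pi/4,\pi]$. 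Second, the finitely many residual cases ($f_4$ vs.\ $f_1$ or $f_2$, $f_5$ vs.\ $f_1$ or $f_2$, $f_6$ vs.\ $f_3$) are each settled by the maximum principle on an appropriate subsector, checking the radial boundary lines and the unit-circle arc via the explicit form $Li_2(e^{it})=\tfrac{\pi^2}{6}-\tfrac{t(2\pi-t)}{4}-iCl_2(t)$ and secant/tangent bounds for the Clausen integral. Your sketch has neither the explicit power bounds that reduce to finitely many $k$, nor the harmonicity argument that finishes each.

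\textbf{A smaller point on item (1).} Your proposed verification of $f_2(r)<f_1(re^{i\pi/3})$ by ``checking positivity at $r=1$ and no interior zero'' is plausible but unspecific. The paper instead observes that $\Re Li_2(re^{i\theta})>0$ for $|\theta|\le\pi/3$ (via the identity $\Re Li_2(re^{i\pi/3})=\tfrac16 Li_2(-r^3)-\tfrac12 Li_2(-r)$ and the argument monotonicity of $Li_2$), and then the chain
\[
f_1(z)=\tfrac{1}{\sqrt2}\sqrt{\Re Li_2(z)+|Li_2(z)|}>\tfrac{1}{\sqrt2}\sqrt{|Li_2(z)|}\ge\tfrac{1}{\sqrt2}\sqrt{\tfrac{\pi^2}{12}|z|}=\tfrac12\sqrt{\tfrac{\pi^2}{6}|z|}\ge f_2(|z|)
\]
gives the strict inequality in one line. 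This sign condition on $\Re Li_2$ is the clean idea you are missing for part~(1).
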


\section{Partition Polynomials}

\subsection{Partition Polynomials Weighted by Number of Parts}\label{section:original_polynomials}

Consider the constant exponent sequence $\{a_m\}$ where $a_m=1$ for all $m$ so there is no restriction on the
parts of a partition.
For $k \in {\mathbb Z}^+$ and $t \in \widehat{ {\mathbb Z}_k}$,
 the Dirichlet series $D_{t,k}(s)$ are given by
\[
D_{t,k}(s) = \sum_{m=1}^\infty \frac{   e_k( mt) }{ m^s}
=
\frac{1}{k^s} \, \sum_{r=1}^k e_k( r t)  \zeta( s, r/k) 
=
F( t/k,s)
\]
where $F(\lambda,s)$ is the periodic zeta function
\[
F(\lambda,s) = \sum_{m=1}^\infty \frac{ e( m \lambda)}{ m^s} .
\]
$F(\lambda,s)$ is periodic in $\lambda$ with period 1. When $\lambda$ is not an integer,
then $F(\lambda,s)$ is an entire function of $s$; otherwise, $F(\lambda,s)$ reduces to the Riemann
zeta function.  

\begin{remark}
In this example $D_{t,k}(s)$ reduces to the usual  Dirichlet $L$-function
relative to a  character of ${\mathbb Z}_k$ when $(t,k)=1$.
\end{remark}

Since $\zeta(0, r/k) =1/2 - r/k$, for $1 \leq r < k$,
\[
D_{t,k}(0)=  \sum_{r=1}^k \, (\tfrac{1}{2} - \tfrac{r}{k} ) \, e_k( tr ) 
=
- \frac{1}{k} \sum_{r=1}^k r e_k( t r) ,
 \quad 1 \leq t < k ,
\]
while $D_{k,k}(0)=-1/2$.
Since $F(s ,\lambda)$ has a pole at $s=1$ when  $\lambda$ is an integer, we find
\[
Res( D_{t,k},s=1) =  Res( \zeta(s), s=1) \, \delta_{t,k} =    \delta_{t,k} .
\]
Hence its
  $\widehat{{\mathbb Z}_k}$ Fourier transform is
\[
c_k(j)
=
\frac{1}{k} , \quad 1\leq   j \leq k .
\]
For $h$  relatively prime to $k$  such that $1 \leq h < k$,
 we find  an explicit form for $L_{h,k}(z)$:
\begin{align*}
L_{h,k}(z)^2
&=
\sum_{j=1}^k  c_k(j) Li_2(   e_k(j) z)
=
\frac{1}{k} \sum_{ j=1}^k  Li_2(   e_k(j) z)
=
\frac{1}{k^2} Li_2(z^k ) ;
\end{align*}
in particular,  the functions $L_{h,k}(z)$ are independent of the choice of $h$ so we write $L_k(z)$ for them. 
Furthermore, each  $L_k(z)$ is analytic on the unit disk except for branch cuts at the rays
$\arg z = \pi j/k,$ $0 \leq j <k$.

For small values of $k$, we write out the values of $b_k$:
\[
b_1(1)=-1/2 ; \, b_2(1) = (-1)^n /2 ; \,
b_3(1)= 1/6,  b_3(2)=-1/6,  b_3(3)=-1/2 .
\]
Hence we find that
\begin{align*}
\omega_{1,1,n}(z)&= 
\sqrt{1-z}, \quad \omega_{1,2,n}(z)= (-1)^n \sqrt{1-z},
\\
\omega_{1,3,n}(z)
&=
e_3(-n) \,
\frac{ (1- e_3(2) z)^{1/6} (1-z)^{1/2}}{  ( 1- e_3(1) z) )^{1/6}} ,
\\
\omega_{2,3,n}(z)
&=
e_3(-2n) \,
\frac{ (1- e_3(1) z)^{1/6} (1-z)^{1/2}}{  ( 1- e_3(2) z) )^{1/6}} ,
\\
\end{align*}
We can verify directly that $\omega_{1,3,n}(z) \neq  - \omega_{2,3,n}(z)$ for $\vert z \vert<1$. 
Using Theorem \ref{fkdomanancetheorem}, we obtain that there are only three phases $R(1),$ $R(2),$ and $R(3).$  These phases can be more concretely written down as
\begin{align*}
R(1) &= \left\{z\in \mathbb{D}: \Re \sqrt{Li_2(z)}>\frac{1}{2} \Re \sqrt{Li_2(z^2)},\frac{1}{3} \Re \sqrt{Li_2(z^3)}\right\}, \\
R(2) &= \left\{z\in \mathbb{D}: \frac{1}{2}\Re \sqrt{Li_2(z^2)}>\frac{1}{2} \Re \sqrt{Li_2(z)},\frac{1}{3} \Re \sqrt{Li_2(z^3)}\right\}, \\
R(3) &= \left\{z\in \mathbb{D}: \frac{1}{3}\Re \sqrt{Li_2(z^3)}>\frac{1}{2} \Re \sqrt{Li_2(z)}, \Re \sqrt{Li_2(z)}\right\}.
\end{align*}
Using Theorem \ref{theorem:asymptotic_form} we record the following proposition.
 \begin{proposition}\label{prop:partitionpolyphase}
The zero attractor for the partition  polynomials is
  \[
  S^1 \cup \partial R(1) \cup \partial R(2) \cup \partial R(3) .
  \]
  \end{proposition}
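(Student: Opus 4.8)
The plan is to pin down exactly the phases of $\{F_n(z)\}$ inside $\mathbb D$, check the nonvanishing hypothesis of Theorem~\ref{theorem:asymptotic_form}, and then read off the zero attractor $A$ from that theorem together with the containment $A\subseteq\overline{\mathbb D}$ proved above.

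First I would record the data already assembled for this example: $c_k(j)=1/k$, so $L_{h,k}(z)=L_k(z)=\tfrac1k\sqrt{\mathrm{Li}_2(z^k)}$ is independent of $h$ and its real part $\Re L_k(z)=f_k(z)$ is precisely the root dilogarithm of Definition~\ref{def:root_dilog}; moreover $D_{t,k}(s)=F(t/k,s)$ is entire for $t\neq k$ and equals $\zeta(s)$ (a simple pole at $s=1$ of residue $1$) for $t=k$, so the Meinardus hypotheses hold and the asymptotic forms \eqref{eq:form1}--\eqref{eq:form2} are available on each phase. Next I would determine the phases. Since the phase functions are the $f_k$, the phase $R(k)$ is the maximal open subset of $\mathbb D$ on which $f_k$ strictly dominates all the other $f_\ell$. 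Using the symmetry $f_k(\overline z)=f_k(z)$ it suffices to argue for $0\le\arg z\le\pi$, and there Theorem~\ref{fkdomanancetheorem}(1)--(4) cover the overlapping sectors $|\arg z|\le\pi/3$, $|\arg z|\le\pi/2$, and $\pi/2\le\arg z\le\pi$, and together yield $f_k(z)<\max(f_1(z),f_2(z),f_3(z))$ for all $z\in\mathbb D\setminus\{0\}$ whenever $k\ge4$. Hence $R(k)=\emptyset$ for $k\ge4$, the decomposition of $\mathbb D$ is controlled by $f_1,f_2,f_3$ alone, and the phases are exactly the three sets $R(1),R(2),R(3)$ displayed before the proposition; a few explicit evaluations (e.g.\ at $z=\tfrac12$, at $z=-\tfrac12$, and at a point on the arc through $e^{3\pi i/4}$) show each is nonempty, so the hypotheses of Theorem~\ref{theorem:asymptotic_form} are met.

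I would then verify $\Omega_{n,m}(z)\neq0$ on $R(m)\setminus\{0\}$: for $m=1,2$ this is immediate since $\Omega_{n,1}(z)=\sqrt{1-z}$ and $\Omega_{n,2}(z)=(-1)^n\sqrt{1-z}$ never vanish on $\mathbb D$, and for $m=3$ it is the already-noted inequality $\omega_{1,3,n}(z)\neq-\omega_{2,3,n}(z)$ for $|z|<1$. Now Theorem~\ref{theorem:asymptotic_form} applies: part~(a) puts $\partial R(m)\cap\mathbb D$ into $A$, part~(c) puts $S^1$ into $A$, and part~(b) keeps every nonzero point of each $R(m)$ out of $A$. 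I would also note that on each phase $\Re L_m=f_m>f_\ell\ge0$ for the other two phase functions, so $\Re L_m>0$ strictly on $R(m)$; hence $L_m$ is nowhere purely imaginary there, the branch-cut regime \eqref{eq:form2} never occurs inside a phase, \eqref{eq:form1} governs $F_n$ throughout $R(m)$, and no extra branch-cut piece is contributed to $A$. Finally, since $\overline{R(1)}\cup\overline{R(2)}\cup\overline{R(3)}=\mathbb D$ and no $R(m)$ contains $0$ (all $f_k$ vanish there, while $F_n(0)=0$ always), parts (a) and (b) give $A\cap\mathbb D=(\partial R(1)\cup\partial R(2)\cup\partial R(3))\cap\mathbb D$; combining this with $S^1\subseteq A\subseteq\overline{\mathbb D}$ yields $A=S^1\cup\partial R(1)\cup\partial R(2)\cup\partial R(3)$.

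The main obstacle is the phase identification: one must stitch the several sub-sectors of Theorem~\ref{fkdomanancetheorem} into the single statement ``$f_k<\max(f_1,f_2,f_3)$ on $\mathbb D\setminus\{0\}$ for all $k\ge4$'' and confirm that all three of $R(1),R(2),R(3)$ are genuinely nonempty, the third region being small. Everything after that is bookkeeping with Theorem~\ref{theorem:asymptotic_form} and the containment $A\subseteq\overline{\mathbb D}$.
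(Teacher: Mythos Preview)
Your proposal is correct and follows precisely the route the paper takes: invoke Theorem~\ref{fkdomanancetheorem} to reduce the phase competition to $f_1,f_2,f_3$, and then read off the attractor from Theorem~\ref{theorem:asymptotic_form} together with $A\subseteq\overline{\mathbb D}$. Your write-up is in fact more careful than the paper's, which merely cites the two theorems; you explicitly check the nonvanishing of $\Omega_{n,m}$ (for $m=1,2$ directly, for $m=3$ via $\omega_{1,3,n}\neq-\omega_{2,3,n}$), note that $f_m>0$ on $R(m)$ rules out the branch-cut regime~\eqref{eq:form2}, and handle the point $0$---all details the paper leaves implicit or relegates to the Remark following the proposition.
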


 \begin{figure}\label{fig3}
 \begin{center}
        \includegraphics[height=4.5cm,width=4.5cm]{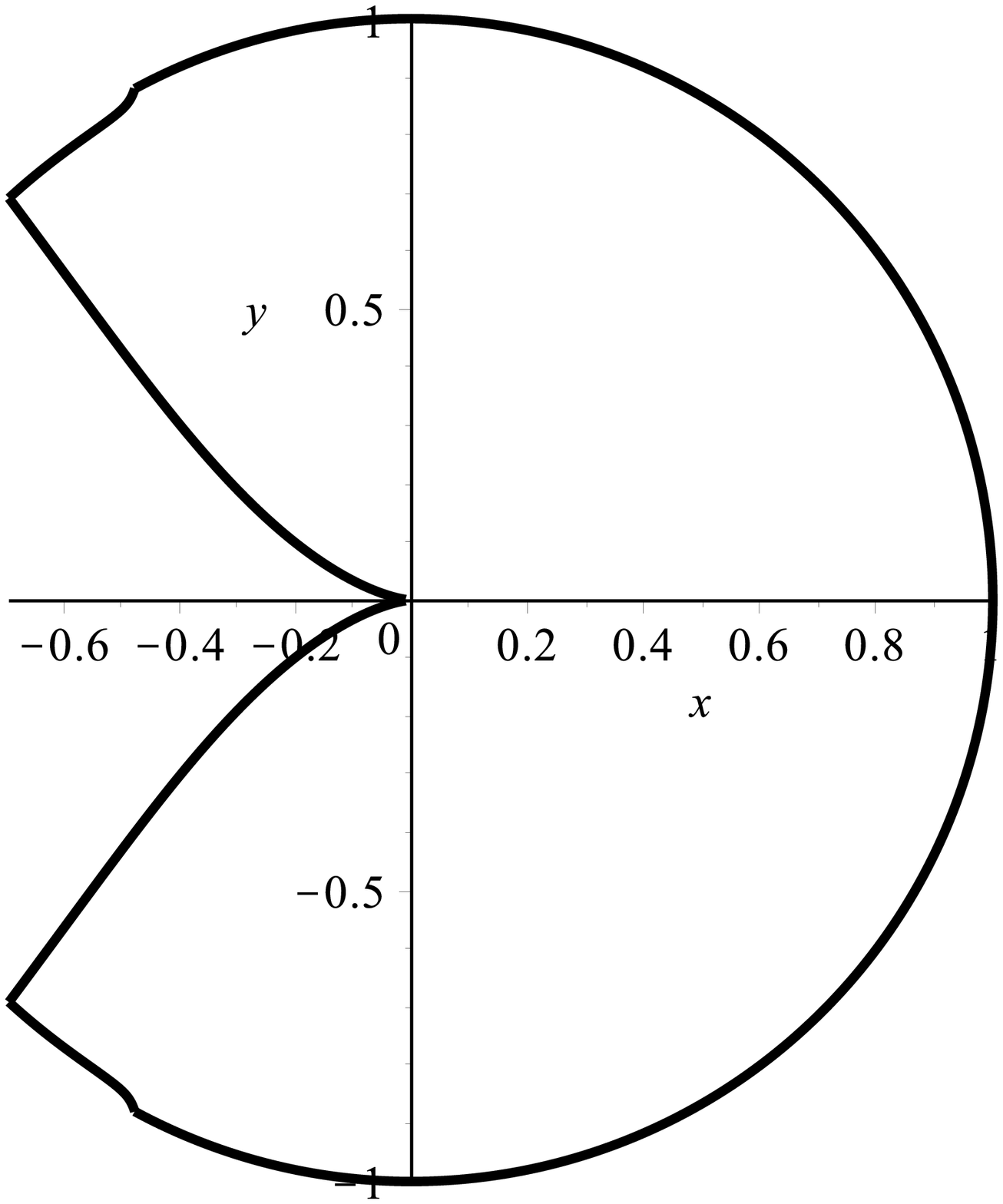} 
        \,
\includegraphics[height=4.50cm,width=4.50cm]{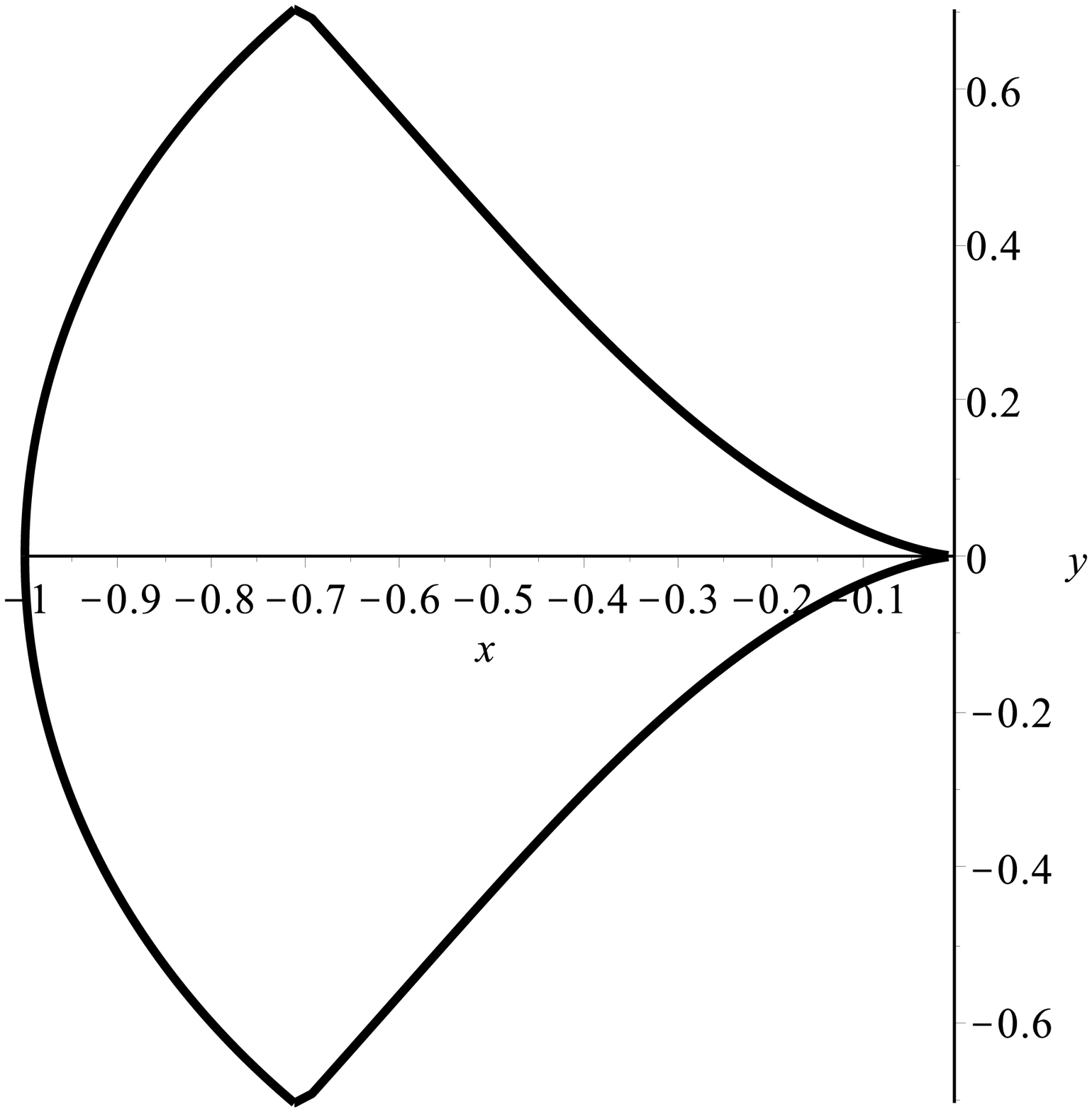} 
\qquad
\includegraphics[height=4.5cm,width=4.5cm]{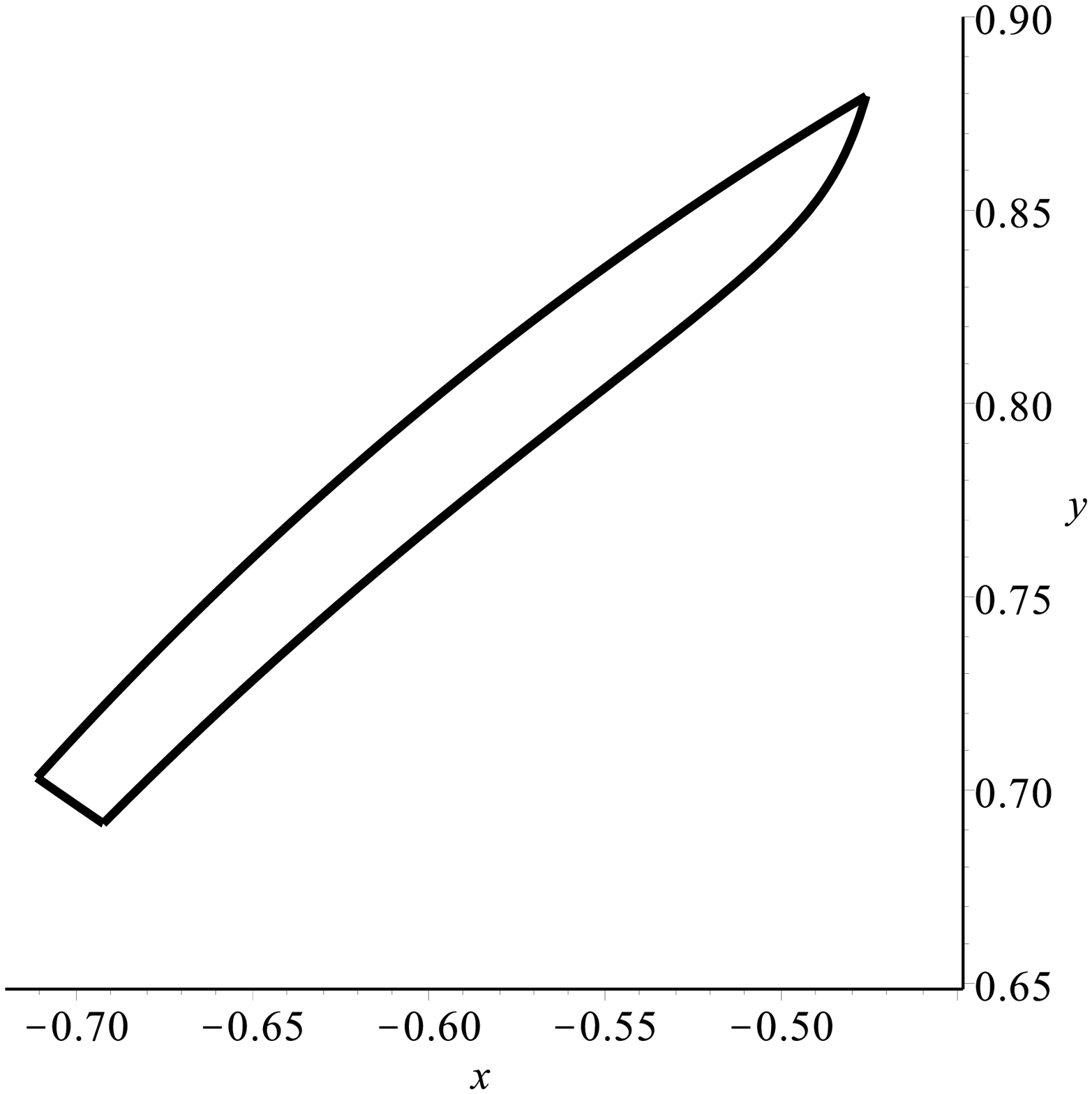} 
\end{center}
\caption{
Phases for weighted partition polynomials: (a) Region 1
(b) Region 2, (c) Region 3 in upper half plane.
}
\end{figure}

   \begin{figure}\label{fig4}
   \begin{center}
        \includegraphics[height=5.18cm,width=5.18cm]{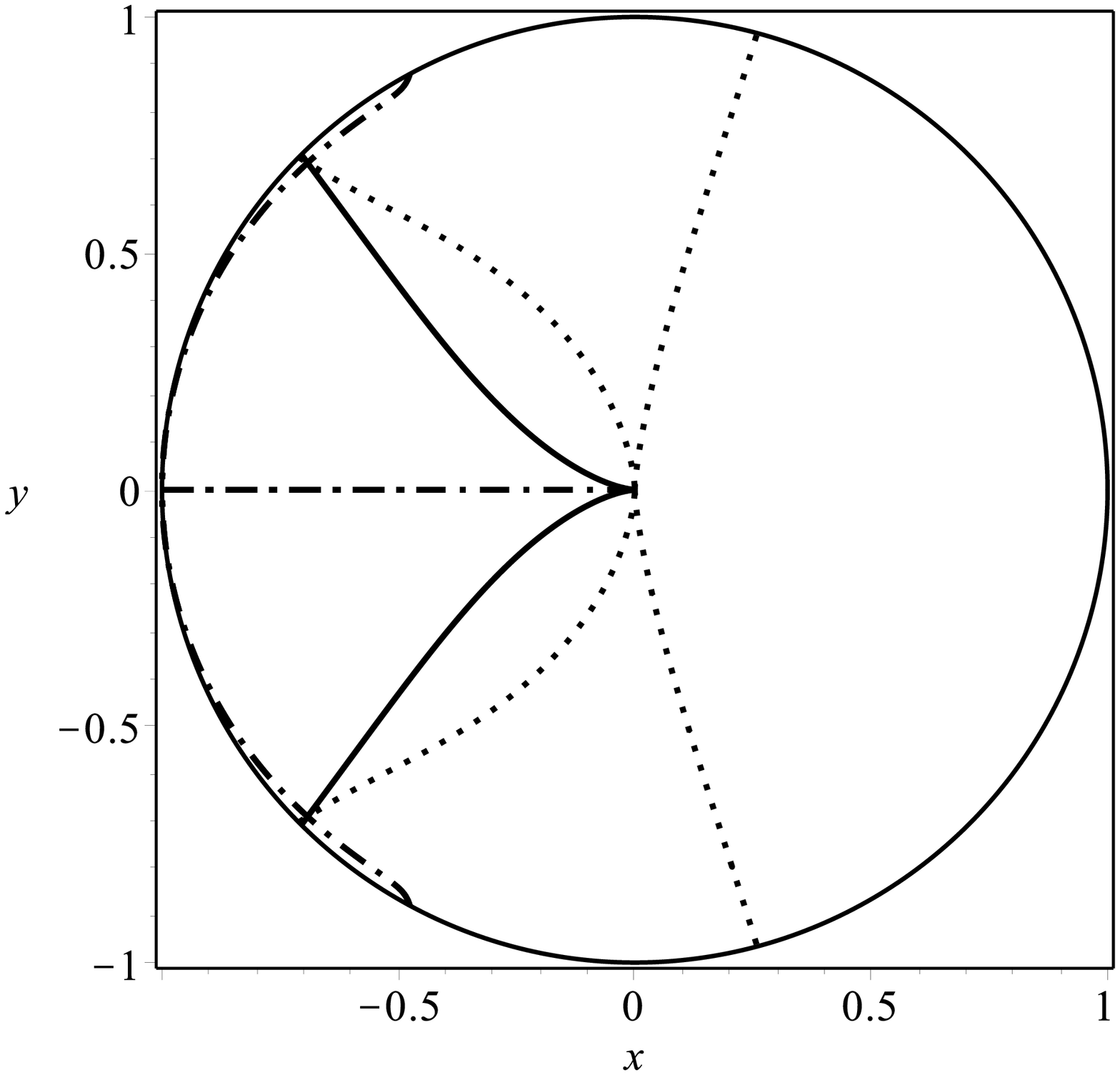} 
        \qquad
\includegraphics[height=5.05cm,width=5.05cm]{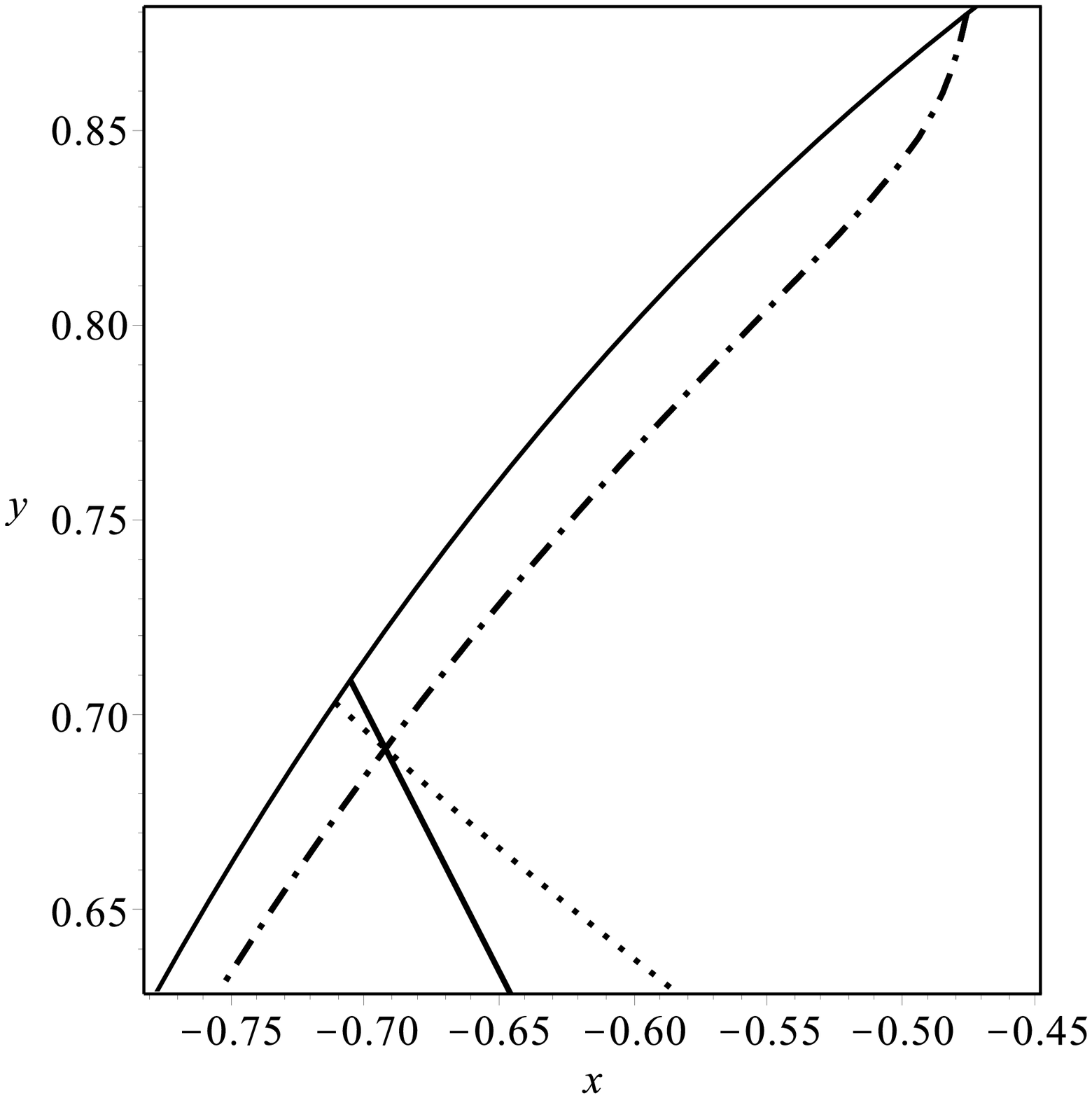} 
\end{center}
\caption{
(a) Level sets $f_1(z)=f_2(z)$ (solid line), $f_2(z)=f_3(z)$ (dotted line), $f_1(z)=f_3(z)$ (dash-dot lines) inside the unit disk, (b) Closeup near the triple point
}
\end{figure}

\begin{remark} 
The appendix shows that there are at most three phases; in fact, all three do occur. In the right-half plane,
including the imaginary axis, $f_2(z), f_3(z) < f_1(z)$. In the second quadrant, consider 
$f_1( r e^{it})$ which is decreasing to $0$ while  $f_2( r e^{it})$ is increasing from $0$, $t \in [\pi/2,\pi]$. 
Hence, for each $0<r \leq 1$, there is a unique value $t_{12}(r)$ such that
$f_1( r e^{ i t_{12}(r)}) = f_2( r e^{ i t_{12}(r)})$.  Similar reasoning shows 
$f_2( r e^{it}) = f_3( r^{it})$ has at most one solution for $2 \pi/3 \leq t \leq \pi$; in fact,
$f_3( r e^{ i 5 \pi /6}) < f_2( r e^{ i 5 \pi /6})$ on $(0,1]$.

The phase $R(1)$ includes the unit disk that lies in the right-half plane.
The phases $R(2)$ and $R(3)$ lie in the left-hand plane since $f_1(z) > f_k(z)$,
for $k \geq 2$, $0<\vert z \vert \leq 1$, and $\Re z \geq 0$.   For fixed $r >0$,
$f_1( r^{it})$ is decreasing to $0$ while $f_2( e^{ it})$ is increasing from $0$ on $[\pi/2,\pi]$.
So $f_1(z) = f_2(z)$ determines a level set that connects $0$ to a unique point on the unit circle.
See Figure \ref{fig3}.
\end{remark}

\subsection{Partition Polynomials Whose Parts Satisfy a Linear Congruence}\label{section:linear_congruence}

For a fixed  positive integer $p \geq 2$,
consider the exponent sequence $\{a_m\}$ whose nonzero entries satisfy $ m \equiv 1 \, {\rm mod} \,  p$. 
The  corresponding partition polynomials  count partitions all of whose parts satisfy the congruence
$1 \mod p$.
 For $k\in {\mathbb Z}^+$ and  $t \in \widehat{ {\mathbb Z}_k}$,
let
    \[
    D_{t,k}(s) 
    =
    \sum_{m=1}^\infty \frac{a_m e_k(tm)}{m^s}
    =
     \sum_{\ell=0}^\infty \frac{ e_k(t(p \ell+1))}{(p \ell+1)^s}
     =
      \frac{e_k(t)}{p^s} L(p t/k, s,1/p)
    \] 
     where $L(\lambda,s,\nu)$ is the Lerch zeta function. 
     We observe that  if $k \mid pt$ then $D_{t,k}(s)$ reduces to 
     $e_k(t) p^{-s} \zeta(s, 1/p)$. Since $\zeta(0, \alpha) = 1/2-\alpha$ and
     \[
L(\lambda,0, \nu  ) =  \frac{1}{1 - \exp(2 \pi i \lambda )} ,
\quad \lambda  \notin {\mathbb Z} .
\]
 we have for $1 \leq t \leq k$
     \[
     D_{t,k}(0) = 
     \begin{cases}
    e_k(t) ( 1/2-1/p),& k \mid pt,
     \\
      \frac{e_k(t)}{1 - e_k(pt)  } ,&  k  \nmid pt
     \end{cases}
     \]
     In the special case for $p=2$ with  $1\leq t < k$ and $t$ is relatively prime to $k$
     \[
     D_{t,k}(0)=  \frac{ i}{2 \sin( 2 \pi t/k)} 
     \]
     and, with $p=2$,   $D_{k,k}(0)=0$.

    Recall that $L( \lambda,s, \nu)$ is an entire function of $s$ if and only if $\lambda \notin {\mathbb Z}$. 
    When $\lambda$ is
    an integer, $L( \lambda,s, \nu)$ is a mereomorphic function of $s$ with a unique singularity at $s=1$ which is a simple pole with residue $1$.
    Hence $D_{t,k}(s)$ has a singularity if and only if $k \mid pt$. If this is case, then the singularity is a simple pole at $s=1$ with residue
\[
Res( D_{t,k}(s), s=1)
=
\frac{1}{p} e_k(t) .
\]
\begin{lemma}
For $p \geq 3$, the finite Fourier transform $b_k(t)$ of $ t \mapsto D_{t,p}(0)$ is
\[
b_k(t)
=
\begin{cases}
\frac{p/2-1}{p},& t=1, 2  \mid p \textrm{ and }  2 \mid p 
\\
\frac{p-2 }{ 2p} , & t=1, 2 \nmid p
\\
0,& 2 \leq t \leq p 
\end{cases}
\]

\end{lemma}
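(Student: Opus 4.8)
The plan is to compute the finite Fourier transform $b_k(t) = \frac{1}{k}\sum_{s=1}^{k} e_k(-st) D_{s,k}(0)$ directly from the closed form of $D_{s,k}(0)$ derived just above the statement, in the case $k = p$. (I note the statement as written uses $D_{t,p}(0)$ as the input but indexes the output by $b_k(t)$ with $1 \le t \le p$; I would first make this consistent by reading it as $b_p(t)$.) The key observation is that when $k = p$, the condition $k \mid ps$ becomes $p \mid ps$, which holds for \emph{every} $s$. So in fact \emph{all} the values $D_{s,p}(0)$ fall into the first branch of the case distinction: $D_{s,p}(0) = e_p(s)(1/2 - 1/p)$ for $1 \le s \le p$. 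This collapses the problem to evaluating a single clean exponential sum.

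First I would substitute this into the definition:
\[
b_p(t) = \frac{1}{p}\sum_{s=1}^{p} e_p(-st)\, e_p(s)\left(\tfrac12 - \tfrac1p\right)
= \frac{p-2}{2p}\cdot \frac{1}{p}\sum_{s=1}^{p} e_p\big(s(1-t)\big).
\]
Then I would invoke the orthogonality of characters on ${\mathbb Z}_p$: the sum $\sum_{s=1}^{p} e_p(s(1-t))$ equals $p$ when $t \equiv 1 \pmod p$ and $0$ otherwise. Since $1 \le t \le p$, the congruence $t \equiv 1$ holds only at $t = 1$. This immediately yields $b_p(1) = \frac{p-2}{2p}$ and $b_p(t) = 0$ for $2 \le t \le p$, which matches the $2 \nmid p$ branch of the claimed formula.

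The remaining issue is the $2 \mid p$ branch, which claims $b_p(1) = \frac{p/2 - 1}{p}$. But $\frac{p/2-1}{p} = \frac{p - 2}{2p}$, so this is the \emph{same} value — the two cases in the statement agree and the parity split is cosmetic (presumably written to emphasize that $p/2-1$ is an integer when $p$ is even). So once the exponential sum is evaluated there is nothing further to check. I anticipate no real obstacle here; the only subtlety worth spelling out in the writeup is the point that $k = p$ forces $k \mid ps$ universally, so the second (more complicated) branch of the $D_{t,k}(0)$ formula — and the Lerch value $L(\lambda,0,\nu) = 1/(1-e^{2\pi i\lambda})$ — never enters. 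If one instead wanted $b_k$ for general $k$ (not just $k = p$), the computation would genuinely require handling the $1/(1-e_k(ps))$ terms and would be substantially harder, but the lemma as stated only concerns $k = p$.
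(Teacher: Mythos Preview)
Your argument is correct. The paper actually states this lemma without proof, so there is no approach to compare against; your computation supplies the missing verification. Your key observation---that for $k=p$ the divisibility condition $k\mid pt$ holds for every $t$, collapsing all the $D_{t,p}(0)$ into the single closed form $e_p(t)(1/2-1/p)$---is exactly the right one, and the orthogonality evaluation is routine from there. Your remark that the two $t=1$ branches in the statement coincide numerically (since $(p/2-1)/p=(p-2)/(2p)$) is also correct and worth noting, as is your clarification that the lemma should be read with $k=p$.
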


\begin{example}
For the special case: $p=2$ with small values of $k$,
   we record the values of the finite Fourier transform $b_k$:
    \begin{align*}
     b_{3}(1)&= - b_{3}(2)= 1/3, \quad b_{3}(3)=0
    \\
    b_{4}(1) &=b_{4}(3)= 1/4, \quad b_{4}(2)=b_{4}(4)=0,
    \\
    b_{6}(1) &= - b_{6}(5) = 1/3, \quad b_{6}(2)=b_{6}(3)=b_{6}(4)=b_{6}(6)=0 .
    \end{align*}
  This allows us to write out the explicit forms for the corresponding $\omega_{h,k,n}(z)$ functions:
\begin{align*}
\omega_{1,1,n}(z)
 &=
 1,
 \quad \omega_{1,2,n}(z) 
 =
 (-1)^n,\quad 
 \\
 \omega_{1,4,n}(z) 
 &=
 i^{-n}\sqrt[4]{\frac{z-i}{z+i}}, \quad \omega_{3,4,n}(z) 
 =
 i^{n}\sqrt[4]{\frac{z+i}{ z-i}}
 \end{align*}
 Further we can check directly that $\omega_{1,4,n}(z)  = - \omega_{3,4,n}(z) $ only if $z=0$.
 \end{example}
 
 \begin{example}
For  $p=3$,  we have
\begin{align*}
b_{1}(1) &= - 1/2, \quad b_{2}(1)=1/3, b_{2}(2)=-1/6
\\
b_{6}(1) &= 1/3, b_{6}(4)=-1/6, \quad b_{6}(2)=b_{6}(3) = b_{6}(5) = b_{6}(6)=0
\end{align*}
 \end{example}

    \begin{lemma}
If $(k,p)=m>1$, then $c_k(j)=
\begin{cases}
\frac{(k,p)}{kp},& j \equiv 1 \mod (k,p),
\\
0,& j \not\equiv 1 \mod (k,p)
\end{cases}$
\end{lemma}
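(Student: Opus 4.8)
The plan is to substitute the explicit residue formula (established in the computation preceding the statement) into the definition of $c_k$ and then evaluate a finite sum of roots of unity. Recall that $D_{t,k}(s) = e_k(t)\,p^{-s}\,L(pt/k,s,1/p)$ has a pole at $s=1$ exactly when $k \mid pt$, in which case $\mathrm{Res}(D_{t,k}(s),s=1) = \tfrac1p e_k(t)$, and the residue vanishes for all other $t$. Feeding this into Definition \ref{def:b_k,a_k} gives
\[
c_k(j) = \frac1k \sum_{t=1}^{k} e_k(-tj)\,\mathrm{Res}(D_{t,k}(s),s=1)
= \frac{1}{kp} \sum_{\substack{1 \le t \le k \\ k \mid pt}} e_k\!\bigl(t(1-j)\bigr),
\]
so the whole problem reduces to describing the index set $\{\,t : 1 \le t \le k,\ k \mid pt\,\}$ and summing a character of $\mathbb{Z}_k$ over it.

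Next I would carry out the divisibility bookkeeping. Write $m = (k,p)$ and $k = m k'$, $p = m p'$ with $(k',p')=1$. Then $k \mid pt \iff k' \mid p' t \iff k' \mid t$, the last equivalence using coprimality of $k'$ and $p'$. Hence the index set is exactly $\{k', 2k', \dots, mk' = k\}$, a set of $m$ values; writing $t = k'\ell$ with $1 \le \ell \le m$ and using $k'/k = 1/m$ turns each summand into $e_m(\ell(1-j))$, so
\[
c_k(j) = \frac{1}{kp} \sum_{\ell=1}^{m} e_m\!\bigl(\ell(1-j)\bigr).
\]

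Finally, the inner sum is the orthogonality relation for $\mathbb{Z}_m$: $\sum_{\ell=1}^{m} e_m(\ell r)$ equals $m$ if $m \mid r$ and $0$ otherwise. Taking $r = 1-j$ yields $c_k(j) = m/(kp) = (k,p)/(kp)$ when $j \equiv 1 \bmod (k,p)$ and $c_k(j)=0$ when $j \not\equiv 1 \bmod (k,p)$, which is the claim. There is no genuine obstacle here; the only step that needs care is the reduction $k \mid pt \iff k' \mid t$, i.e.\ correctly extracting the common factor $m$ from both $k$ and $p$ before invoking $(k',p')=1$, after which the result is just the evaluation of a geometric sum of $m$th roots of unity.
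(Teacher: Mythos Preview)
Your proof is correct and follows essentially the same approach as the paper: both identify the index set $\{t:k\mid pt\}$ as the multiples of $k/(k,p)$ in $\{1,\dots,k\}$ via the coprimality of $k/(k,p)$ and $p/(k,p)$, substitute $t=\ell\cdot k/(k,p)$, and reduce to the orthogonality relation for $\mathbb{Z}_{(k,p)}$. The only differences are notational (your $m,k',p',\ell$ correspond to the paper's $(k,p),k_0,p_0,v$).
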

\begin{proof}
Let $1 \leq t \leq k$. Then $ k \mid pt $ if and only if $k_0 \mid p_0t$ where $k_0=k/(k,p)$ and $p_0 = p /(k,p)$. In other words,
$k \vert \mid pt$ if and only if $p_0 t \in k_0 {\mathbb Z}$. In our case, $t= v k_0$, $1 \leq v \leq  (k,p)$. Consider
\begin{align*}
c_k(j)
&=
\frac{1}{k} \sum_{t=1}^k e_k( -jt) \, Res( D_{t,k}(s),s=1)
=
\frac{1}{k} \sum_{t=1}^k e_k( -jt) \, \delta_{  k \mid pt } \frac{ e_k(t)}{p}
\\
&=
\frac{1}{k} \sum_{t=1}^k e_k( -jt) \, \delta_{  k_0 \mid p_0 t } \frac{ e_k(t)}{p}
=
\frac{1}{k} \sum_{v=1}^{ (k,p) } e_k( -j v k_0) \, \frac{ e_k( vk_0)}{ p}
\\
&=
\frac{1}{kp} \sum_{v=1}^{(k,p)} e_k ( v k_0 (1-j)) 
=
\frac{1}{kp} \sum_{v=1}^{ (k,p)} e_{ (k,p)}( v (1-j) )
\\
&=
\begin{cases}
\frac{ ( k,p) }{pk}  , & j \equiv 1 \mod (k,p)
\\
0, &  j \not\equiv 1 \mod (k,p)
\end{cases}
\end{align*}
\end{proof}

We use this Kubert-type identity in the next lemma:
  \[
   \sum_{m=1}^{k} Li_2(  z e_k( m )) = \frac{1}{k} Li_2( z^k ) .
   \]

\begin{lemma} 
Let $1\leq h < k$ and $(h,k)=1$. Then
   \[
   \displaystyle   L_{h,k}(z)^2 
   =  \frac{  (k,p)^2 }{ k^2 p}
Li_2  \left(   \,  e_{  (k,p)} (h  )  \,  z^{ k/(k,p)}  \right) .
\]
   \end{lemma}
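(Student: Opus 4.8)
The plan is to compute $L_{h,k}(z)^2$ directly from its definition $L_{h,k}(z)^2 = \sum_{j=1}^k c_k(j) Li_2( e_k(jh) z)$, substituting the value of $c_k(j)$ just established. Write $m = (k,p)$ and $k_0 = k/m$. Since $c_k(j) = m/(kp)$ when $j \equiv 1 \bmod m$ and vanishes otherwise, only the indices $j = 1, 1+m, 1+2m, \dots, 1+(k_0-1)m$ survive, and each carries the same constant $m/(kp)$. Thus
\[
L_{h,k}(z)^2 = \frac{m}{kp} \sum_{r=0}^{k_0-1} Li_2\!\left( e_k\!\big( h(1+rm) \big)\, z \right).
\]
Since $e_k(h(1+rm)) = e_k(h)\, e_k(hrm) = e_k(h)\, e_{k_0}(hr)$, the inner sum is $\sum_{r=0}^{k_0-1} Li_2\big( w\, e_{k_0}(hr) \big)$ with $w = e_k(h) z$.

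The next step is to evaluate this $k_0$-term sum using the Kubert-type identity $\sum_{m=1}^{k} Li_2( z e_k(m)) = \frac{1}{k} Li_2(z^k)$ stated just before the lemma. Because $(h,k)=1$ implies $(h,k_0)=1$, the map $r \mapsto hr \bmod k_0$ permutes the residues mod $k_0$, so $\sum_{r=0}^{k_0-1} Li_2\big( w\, e_{k_0}(hr)\big) = \sum_{r=0}^{k_0-1} Li_2\big( w\, e_{k_0}(r)\big) = \frac{1}{k_0} Li_2(w^{k_0})$ by the identity (with index $k_0$). Now $w^{k_0} = (e_k(h) z)^{k_0} = e_k(h k_0) z^{k_0} = e_m(h)\, z^{k_0}$, since $e_k(h k_0) = \exp(2\pi i h k_0 / k) = \exp(2\pi i h / m) = e_m(h)$. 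Assembling the pieces gives
\[
L_{h,k}(z)^2 = \frac{m}{kp} \cdot \frac{1}{k_0} Li_2\!\left( e_m(h)\, z^{k_0} \right) = \frac{m^2}{k^2 p} Li_2\!\left( e_{(k,p)}(h)\, z^{k/(k,p)} \right),
\]
which is the claimed formula.

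The only genuinely delicate points are bookkeeping rather than conceptual. First, one must be careful that the Kubert identity is being applied with the correct index and that branch-cut conventions (the paper fixes $\sqrt[s]{x} = \exp((\log x)/s)$ with $\Im \log \in (-\pi,\pi]$, per the notational conventions) are consistent, since $Li_2$ has a branch cut on $[1,\infty)$ and we are asserting an identity of analytic functions on $\mathbb{D}$; this is handled by noting both sides are analytic on $\mathbb{D}$ away from the relevant rays and agree on a neighborhood, so they agree by the identity theorem. Second, one should double-check the permutation argument: $(h,k)=1$ guarantees $(h,k_0)=1$ since $k_0 \mid k$, so multiplication by $h$ is a bijection of $\mathbb{Z}/k_0\mathbb{Z}$, which is what lets us drop $h$ from the inner sum. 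I expect the main obstacle, such as it is, to be confirming that the exponent reduction $e_k(hk_0) = e_{(k,p)}(h)$ and the residue-permutation step interact correctly — essentially verifying that the coset structure of $(k,p)\mathbb{Z}/k\mathbb{Z}$ inside $\mathbb{Z}/k\mathbb{Z}$ matches the $c_k(j)$ support computed in the previous lemma.
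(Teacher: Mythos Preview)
Your proof is correct and follows essentially the same route as the paper: restrict the sum to $j\equiv 1\bmod (k,p)$, factor out $e_k(h)$, apply the Kubert identity with index $k_0=k/(k,p)$, and simplify $e_k(hk_0)=e_{(k,p)}(h)$. You are in fact more explicit than the paper at the one nontrivial step---the paper silently replaces $\sum_v Li_2(e_{k_0}(vh)\,w)$ by $\sum_v Li_2(e_{k_0}(v)\,w)$, whereas you justify it via the permutation $v\mapsto hv$ on $\mathbb{Z}/k_0\mathbb{Z}$ coming from $(h,k_0)=1$.
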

\begin{proof}
Consider the following expansions
\begin{align*}
&L_{h,k}(z)^2
=
\sum_{j=1}^k c_k(j) Li_2 \left( e_k( j h) z  \right) 
=
\sum_{ k \mid (j-1)} \frac{ (k,p)}{kp} Li_2 \left ( e_k( j h) z \right) 
\\
&=
\sum_{v=1}^{ k/( k,p)}  \frac{(k,p)}{kp} Li_2 \left( e_k(   ( v(k,p)+1) h )z \right)
\\
&=
 \frac{(k,p)}{kp}
\sum_{v=1}^{ k/( k,p)}  Li_2 \left ( e_{k/(k,p) } (    v  )  \,  [e_k(h) z]  \right)
\\
&=
\frac{ (k,p)}{ k p} \, \frac{1}{k/ (k, p)}
Li_2 \left (   [ e_k(h)z]^{k/ (k,p)} \right) 
\\
&=
\frac{ (k,p)}{ k p} \, \frac{1}{k/ (k, p)}
Li_2 \left(   [ e_k(h k/(k,p) ) ]  \,  z^{ k/(k,p)}  \right) 
\\
&=
\frac{  (k,p)^2 }{ k^2 p}
Li_2  \left(   \,  e_{  (k,p)} (h  )  \,  z^{ k/(k,p)} \right)   .
\end{align*}
\end{proof}

\begin{remark}\label{arethmetlkcomputation}
For polynomials counting partitions whose parts are congruent to $a$ modulo $p$ with $(a,p)=1$, then
\[
L_{h,k}(z)^2 = 
\frac{  (k,p)^2 }{ k^2 p}
Li_2  \left(   \,  e_{  (k,p)} (h a  )  \,  z^{ k/(k,p)} \right)   .
\]

\end{remark}

    For   $p \geq 2$,  assume that  the only nonzero entries of the
    exponent sequence $\{ a_m\}$ satisfy $m \equiv 1 \,  {\rm mod} \, p$.
       Calculating directly from the generating function $G(z,q)$, we get
       \[
F_\ell( e^{ 2 \pi i / p} z)  =  e^{2 \pi i \ell /p}  F_\ell(z)
\]
Since the polynomial $F_\ell(z)$ has real coefficients, $\overline{ F_\ell(z)} = F_\ell( \overline{z})$ as well. So their  moduli $\vert F_\ell(z)\vert$ are invariant under
the action of the dihedral group generated by the rotation by angle $2\pi/ p$ and reflection about the real axis.  
Note the choice of $1 \mod p$ is done for simplicity and a similar analysis can be done for $a \mod p$ together with the
requriement  $a_1=1$ although the computations are needlessly lengthy.  
Nonetheless, for clarity for  the reminder of this section, we will still use the two variable notation for phases
\[
R(h,k) = \{z\in \mathbb{D}\setminus 0: \Re L_{h,k}(z) \ \textrm{is maximal}\}.
\]
 with $R(p,p)=R(1,1)=R(0,p)$ for notational uniformity.  
The phase functions connect to the root dilogarithm function by
\begin{equation}\label{rsqarethemetic}
\Re L_{h,k}(z) = \frac{1}{\sqrt{p}}f_{k/(k,p)}(e_k(h)z).
\end{equation}
We need an elementary number theory lemma to handle the cyclic nature of 
$\Re L_{h,k}(z).$

\begin{lemma} \label{MsicNumTheoreyLemma1}For every $k,j\in \mathbb{N},$ $k/(k,j)=1$ if and only if $k\mid j.$ \end{lemma}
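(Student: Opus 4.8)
The plan is to reduce both implications to the elementary characterization of $(k,j)$ as the positive common divisor of $k$ and $j$ that is divisible by every common divisor (equivalently, the largest one); no asymptotics, Lerch zeta, or dilogarithm machinery enters here, as this is purely a divisibility statement, and the standing convention $k,j\in\mathbb{N}$ guarantees $(k,j)$ is a positive integer so that the quotient $k/(k,j)$ and the inequalities below make sense.

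First I would dispose of the ``if'' direction. Assuming $k\mid j$, observe that $k$ is then a common divisor of $k$ and $j$, so $k$ divides $(k,j)$; combined with the trivial fact that $(k,j)\mid k$, this forces $(k,j)=k$ (they are positive integers each dividing the other), and hence $k/(k,j)=1$.

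For the ``only if'' direction I would argue directly: if $k/(k,j)=1$, then $(k,j)=k$, and since the greatest common divisor always divides $j$, we conclude $k=(k,j)\mid j$.

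There is no genuine obstacle: the lemma is immediate once the defining property of the gcd is invoked, the only point worth a word being the positivity convention on $(k,j)$ just mentioned. In the paper this statement is used only to identify exactly when the rescaled exponent $k/(k,p)$ collapses to $1$ in the formula for $L_{h,k}(z)^2$ from the previous lemma, so a one-line proof of this flavor suffices.
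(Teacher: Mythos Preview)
Your proof is correct and follows essentially the same route as the paper's own: both directions hinge on the defining property that a common divisor of $k$ and $j$ divides $(k,j)$, together with $(k,j)\mid k$ and $(k,j)\mid j$. If anything, your write-up is slightly cleaner in the ``if'' direction, since the paper's sentence ``But $k\le j$ so $k/(k,j)=1$'' is a bit garbled (it should be invoking $(k,j)\mid k$ rather than $k\le j$), whereas you spell out explicitly that $k\mid(k,j)$ and $(k,j)\mid k$ force equality.
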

\begin{proof}
Suppose that $k\mid j$ and of course $k\mid k.$  By the definition of greatest common divisor, $k\mid (k,j).$  But $k\le j$ so $k/(k,j)=1.$ If $k/(k,j)=1$ then $k\mid (k,j).$  Therefore by the definition of greatest common divisor, $k\mid j.$
\end{proof}

The next lemma is a consequence of Theorem \ref{fkdomanancetheorem} part (1).

\begin{lemma} \label{lemma:odd_max}
We can reduce the set of phase functions for every $z\in \mathbb{D},$ 
$j \geq 3,$ 
\[
\max
\{ 
\Re L_{h,k}(z)  : k\mid p,    (k,p)=1 \}
>
\max \{\Re L_{h,k}(z) :  k\nmid p \}
\] 
In terms of phases we have
\[
\overline{\mathbb{D}} = \bigcup_{h\in \mathbb{Z}_p}\overline{R(h,p)}.
\]
 \end{lemma}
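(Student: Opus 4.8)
The plan is to reduce every phase function to the root dilogarithms $f_1$ and $f_2$ and then apply part~(1) of Theorem~\ref{fkdomanancetheorem}. Starting from the identity $\Re L_{h,k}(z)=\tfrac1{\sqrt p}\,f_{k/(k,p)}(e_k(h)z)$ of~\eqref{rsqarethemetic}, I would separate the indices $(h,k)$ into those with $k\mid p$ and those with $k\nmid p$. By Lemma~\ref{MsicNumTheoreyLemma1} the first class is exactly $\{(h,k):k/(k,p)=1\}$, and there $\Re L_{h,k}(z)=\tfrac1{\sqrt p}f_1(e_k(h)z)$; since $k\mid p$ makes $e_k(h)$ a $p$-th root of unity, writing $j/p=h/k$ in lowest terms gives $e_k(h)=e_p(j)$ and hence $\Re L_{h,k}(z)=\Re L_{j,p}(z)$ for a suitable $j\in\mathbb Z_p$, and conversely every $\Re L_{j,p}(z)$ arises this way. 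Consequently
\[
\max\{\,\Re L_{h,k}(z):k\mid p,\ (h,k)=1\,\}=\max_{j\in\mathbb Z_p}\Re L_{j,p}(z),
\]
so the left-hand maximum in the lemma is just the maximum over the index $k=p$.

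The substance is the case $k\nmid p$, i.e.\ $m:=k/(k,p)\ge 2$, where I would show $\Re L_{h,k}(z)<\max_{j\in\mathbb Z_p}\Re L_{j,p}(z)$ for all $z\in\mathbb D\setminus\{0\}$. Put $r=|z|$. Using $f_m(w)=\tfrac1m f_1(w^m)$ together with the fact that, by Proposition~\ref{prop:decreasing} and the conjugate symmetry of $f_1$, $f_1$ attains its largest value on each circle at the positive real point (so $f_1(u)\le f_1(|u|)$ for all $u$ in the closed disk), one gets $f_m(e_k(h)z)\le f_m(r)$; and part~(1) of Theorem~\ref{fkdomanancetheorem} applied at the real point $r$ gives $f_m(r)\le f_2(r)$. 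On the other side, because $p\ge 3$ the $p$ equally spaced points $e_p(j)z$ on the circle of radius $r$ include one, say $e_p(j_0)z=re^{i\theta_0}$, with $|\theta_0|\le\pi/p\le\pi/3$, so part~(1) of Theorem~\ref{fkdomanancetheorem} applied at $re^{i\theta_0}$ gives $f_2(r)<f_1(re^{i\theta_0})$. Chaining these bounds,
\[
\Re L_{h,k}(z)=\tfrac1{\sqrt p}f_m(e_k(h)z)\le\tfrac1{\sqrt p}f_2(r)<\tfrac1{\sqrt p}f_1(re^{i\theta_0})=\Re L_{j_0,p}(z)\le\max_{j\in\mathbb Z_p}\Re L_{j,p}(z).
\]
Since $\tfrac1{\sqrt p}f_2(r)$ is a single bound independent of $(h,k)$, this also dominates the supremum over the infinitely many indices with $k\nmid p$ and yields the strict inequality asserted in the lemma.

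Finally, the covering statement follows from the fact, established above, that the maximum of all phase functions at any $z\in\mathbb D\setminus\{0\}$ is attained by one of the $\Re L_{j,p}$, $j\in\mathbb Z_p$: if that maximum is attained uniquely at $j_0$ it strictly exceeds every other phase function, so $z\in R(j_0,p)$, while if it is attained by several indices then $z$ lies on the common level set of the corresponding phase functions and hence in their closures; the origin lies in $\overline{R(0,p)}$, and every point of the unit circle is a limit of interior points and so, by pigeonhole, lies in some $\overline{R(h,p)}$. This gives $\overline{\mathbb D}=\bigcup_{h\in\mathbb Z_p}\overline{R(h,p)}$. The one step that needs more than the pointwise estimate is verifying that each level-set point is genuinely a limit of an interior phase $R(j,p)$; this is where the real-analyticity of the phase functions and the maximality built into the definition of a phase enter (no two phase functions agree on an open set, and the isolated triple points are limits of all three adjacent phases), and it is the part of a fully rigorous write-up I expect to require the most care.
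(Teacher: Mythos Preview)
Your proposal is correct and follows essentially the same route as the paper: reduce via \eqref{rsqarethemetic} to root dilogarithms, use the pigeonhole principle to find $j_0$ with $|\arg(e_p(j_0)z)|\le\pi/p\le\pi/3$, and then apply part~(1) of Theorem~\ref{fkdomanancetheorem} to chain $f_{k/(k,p)}(e_k(h)z)\le f_2(|z|)<f_1(e_p(j_0)z)$. The only notable difference is that the paper's proof tacitly restricts to $p$ prime (so that $k\mid p$ forces $k\in\{1,p\}$), whereas you handle general $p\ge3$ by observing that for any $k\mid p$ with $(h,k)=1$ one has $e_k(h)=e_p(j)$ for a suitable $j$, so the left-hand maximum reduces to $\max_{j\in\mathbb Z_p}\Re L_{j,p}(z)$; your extra paragraph on the closure statement is also more scrupulous than the paper, which simply asserts it.
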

 \begin{proof}
If $p$ is a prime, we observe, using Lemma \ref{MsicNumTheoreyLemma1},  that
if $p/(p,k)=1$ then $k\mid p$ and either $k=1$ or $k=p.$
In the case $k=1$, then $h=1$ since $(h,k)=1$; otherwise, $k=p$ and so  $h\in \mathbb{Z}_p^\times.$
Using the definition of $\Re L_{h,k}(z)$ in Equation \ref{rsqarethemetic}
and  applying Proposition \ref{prop:decreasing}, we obtain
\begin{align*}
\max \{ \Re L_{h,k}(z) : k\mid p,   (k,h)=1 \}
&=
 \max \{ \frac{1}{\sqrt{p}}f_{1}\left(e_p(h)z\right) : 1\leq h \leq  p \} 
 \\
&=
\frac{1}{\sqrt{p}}f_{1}\left(e_p(h^*)z\right).
\end{align*}
where  $h^*$ is chosen as the minimizer of  $\left|\arg \left(e_p(h^*)z\right) \right|$ over $h\in \mathbb{Z}_p$. 
By Theorem \ref{fkdomanancetheorem} part (1) and the Equation \ref{rsqarethemetic},
we next find that  if $ \left|\arg \left(e_p(h^*)z\right) \right|\le \pi/3$ and $k \nmid p$ then
\[
\Re L_{h,k}(z)
 = 
 \frac{1}{\sqrt{p}}f_{k/(k,p)}\left(e_k(h)z\right) \le \frac{1}{\sqrt{p}}f_2(|z|)
  \leq
   \frac{1}{\sqrt{p}}f_1(e_p(h^*)z).
\]
Hence it is enough to show there exists at least one $h\in \mathbb{Z}_p$
 such  that 
 $\left|\arg \left(e_p(h)z\right) \right| \leq {\pi }/{p} \leq  {\pi}/{3}.$
 But this is a classic consequence of the pigeonhole principle.  
\end{proof}

\begin{theorem}
The phase $R(h,p)$ is an open angular wedge of angle $2\pi /p$ of radius 1 centered at $\arg z =2  h \pi/p.$ 
\end{theorem}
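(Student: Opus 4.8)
The plan is to reduce the description of $R(h,p)$, via the earlier reduction lemmas, to a one-line fact about equally spaced points on a circle. Lemma~\ref{lemma:odd_max} says that for every $z\in\mathbb D\setminus\{0\}$ the overall maximum of the phase functions $\Re L_{h',k'}(z)$ is attained among those $k'$ with $k'\mid p$, and by Lemma~\ref{MsicNumTheoreyLemma1} these are precisely the $k'$ with $k'/(k',p)=1$. For such $k'$, Equation~\eqref{rsqarethemetic} gives $\Re L_{h',k'}(z)=\tfrac1{\sqrt p}\,f_1\!\bigl(e_{k'}(h')z\bigr)$, and as $k'$ runs over the divisors of $p$ and $h'$ over the residues coprime to $k'$ the rotation factors $e_{k'}(h')$ run exactly once over all $p$-th roots of unity. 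Hence, after collapsing the coincidences $R(p,p)=R(1,1)=R(0,p)$, the only phase functions that ever attain the overall maximum are the $p$ functions $g_h(z):=\tfrac1{\sqrt p}\,f_1\!\bigl(e_p(h)z\bigr)$, $h\in\mathbb Z_p$, and $R(h,p)=\{\,z\in\mathbb D\setminus\{0\}: g_h(z)>g_{h'}(z)\ \text{for all }h'\neq h\,\}$. So the theorem reduces to showing this set is an open angular wedge of opening $2\pi/p$ reaching the unit circle.

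The single analytic input needed is the shape of $f_1$ on circles. Since $\mathrm{Li}_2$ has real Taylor coefficients, $f_1(\bar z)=f_1(z)$, so $t\mapsto f_1(re^{it})$ is even; combined with the strict decrease on $[0,\pi]$ from Proposition~\ref{prop:decreasing}, this gives for fixed $r\in(0,1]$ and $s,t\in(-\pi,\pi]$: $f_1(re^{it})>f_1(re^{is})$ iff $|t|<|s|$, with equality iff $s=\pm t$. Consequently $g_h(z)$ is largest for the unique $h$ whose rotated argument $\arg z+\tfrac{2\pi h}{p}$ is closest to $0$ modulo $2\pi$.

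Next I would run the circle-geometry argument. The $p$ numbers $\arg z+\tfrac{2\pi h}{p}$ form an arithmetic progression of step $2\pi/p$ on the circle, so exactly one lies in any half-open arc of length $2\pi/p$. If $\arg z$ is not congruent to an odd multiple of $\pi/p$, the closest one, at distance $d<\pi/p$ from $0$, strictly beats the runner-up, which is at distance $\ge 2\pi/p-d>d$; and no antipodal coincidence $s=-t$ can occur, as that would force $2d$ to be a nonzero multiple of $2\pi/p$, impossible for $0<d<\pi/p$. Thus on the open wedge $W_h:=\{\,0<|z|<1:|\arg z+\tfrac{2\pi h}{p}|<\tfrac{\pi}{p}\,\}$ the function $g_h$ is the unique strict maximum, so $W_h\subseteq R(h,p)$. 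On each bounding ray (where $\arg z$ is an odd multiple of $\pi/p$) two consecutive rotated points land at distance exactly $\pi/p$ on opposite sides of $0$, giving an antipodal tie between two adjacent $g_h$'s that still dominates the rest, so such rays lie in no open phase. Since the $p$ wedges $W_h$ are disjoint, exhaust $\mathbb D\setminus\{0\}$ up to those rays, and the phases are disjoint open sets, $z\in R(h,p)$ forces $z\in W_h$; hence $R(h,p)=W_h$, an open sector of angle $2\pi/p$ reaching $|z|=1$ and bisected by $\arg z\equiv-2\pi h/p$. Since $h\mapsto-h$ permutes $\mathbb Z_p$ this is the same family of wedges as in the statement. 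As a sanity check, the identity $F_\ell(e^{2\pi i/p}z)=e^{2\pi i\ell/p}F_\ell(z)$ makes $|F_\ell|$ invariant under rotation by $2\pi/p$, so the whole phase picture must be rotationally symmetric of order $p$; one could alternatively analyze $R(0,p)$ only and rotate.

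The one genuinely nontrivial point is the \emph{strict} monotonicity of $t\mapsto f_1(re^{it})$ on $[0,\pi]$ supplied by Proposition~\ref{prop:decreasing}: strictness is exactly what forbids a plateau on which two rotated copies of $f_1$ agree on a two-dimensional set, which would contradict the wedge description (and the very existence of a genuine phase decomposition). Everything else — the reduction through Lemma~\ref{lemma:odd_max}, the arithmetic of the $p$-th roots of unity, and the ``disjoint open sets covering the disk'' bookkeeping that upgrades $W_h\subseteq R(h,p)$ to equality — is routine; in particular the ``radius $1$'' claim needs no extra estimate on $f_1$ near $|z|=1$, since $W_h\subseteq R(h,p)$ was shown for all $|z|<1$.
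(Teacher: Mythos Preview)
Your proof is correct and follows essentially the same approach as the paper: reduce via Lemma~\ref{lemma:odd_max} to the $p$ rotated copies of $f_1$, then use Proposition~\ref{prop:decreasing} together with the evenness $f_1(\bar z)=f_1(z)$ to see that on each circle the unique winner is the copy whose rotated argument lies closest to $0$. Your circle-geometry packaging is a tidier version of the paper's triangle-inequality step, and you are more explicit than the paper about the boundary rays and about upgrading $W_h\subseteq R(h,p)$ to equality via disjointness, but the key ingredients are identical.
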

\begin{proof}
By Lemma  \ref{lemma:odd_max}, we already know that 
\[
\overline{\mathbb{D}} = \bigcup_{h\in \mathbb{Z}_p} \overline{R(h,p)}
\]
 We also have a symmetry that $f_1(e_p(h)z) = f_1(e_p(h-1)z e_p(1)z)$ implying that if 
 $z\in R(h,p)$ if and only if $e_p(1)z \in R(h-1,p).$  Set wise we have
\[
R(n,p)=e_p(1)R(n-1,p). 
\] 

Hence we need only to show $R(1,1)$ contains the wedge $\vert\arg z \vert< \pi/p$ and because phases are disjoint and symmetric we will be done.  But by Proposition \ref{prop:decreasing}, Equation \ref{rsqarethemetic} and the previous lemma we need only determine for 
$\vert\arg z\vert\leq \pi/p$ that $\vert\arg z\vert = \min_{1\le h \le p}\vert\arg\left( e_p(h)z\right)\vert.$

Suppose $\vert\arg e_p(n)\vert>\frac{3\pi}{2p}$ then by the triangle inequality 
$\vert\arg ze_p(n)\vert>\frac{\pi}{p}$ and by Proposition \ref{prop:decreasing}, 
\[
f_1(z)\geq f_1(e_{2p}(1))>f_1(ze_p(n)).
\] 
This leaves only two potential candidates which can dominate $f_1(z);$
namely,
 $f_1(e_p(-1)z)$ and $f_1(e_p(1)z).$  So long as
 \[
 \frac{\pi}{p}-\arg z>\arg z, \quad \arg z+ \frac{\pi}{p}>  - \arg (z),
 \] 
 then by Proposition \ref{prop:decreasing}, $f_1(z)$ dominates the both of them.  This concludes the proof.
\end{proof}

Like the other examples all we need now do is apply Theorem \ref{theorem:asymptotic_form}. 
Of course, the boundaries of wedges can be written as the solution to $z^p=-1$ and so we conclude the example. 
              
\begin{proposition}\label{prop:original_asymptotics}
Given $p >2$, the zero attractor for partition polynomials corresponding
 to partitions into parts congruent to $1\mod p$ is the unit circle and a set of $p$ spokes.
$$
S^1\cup \{z\in \mathbb{D}: z^{p}=-1\}
$$
\end{proposition}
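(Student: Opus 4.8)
The plan is to chain together the structural results already assembled. By the preceding theorem we have the explicit description of the phases: each $R(h,p)$ is an open angular wedge of angle $2\pi/p$ and radius $1$ centered at $\arg z = 2h\pi/p$, and these wedges exhaust $\overline{\mathbb D}$. First I would verify the hypothesis of Theorem \ref{theorem:asymptotic_form}, namely that $\Omega_{n,p}(z) \neq 0$ for $0 \neq z \in R(h,p)$. Here the relevant $\omega_{h,k,n}(z)$ are the ones with $k \mid p$ surviving the dominance reduction in Lemma \ref{lemma:odd_max}, and for $p$ prime these are $\omega_{1,1,n}(z) = 1$ and the $\omega_{h,p,n}(z)$; one reads off from $b_p$ (all $b_p(t) = 0$ for $2 \le t \le p$, only $b_p(1) \ne 0$) that each $\omega_{h,p,n}(z)$ is a nonvanishing power of $(1 - e_p(h)z)$ times a root of unity, hence nonzero on the open wedge, and that the sum $\Omega_{n,p}(z)$ over the $\varphi(p)$ coprime residues cannot vanish identically — this is the analogue of the ``$\omega_{1,3,n} \ne -\omega_{2,3,n}$'' checks done in the earlier example and should follow by the same direct argument, e.g.\ comparing arguments or leading behavior as $z \to 1$.

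Granting that, Theorem \ref{theorem:asymptotic_form}(a) gives that every point of $\partial R(h,p) \cap \mathbb D$ lies in the zero attractor; part (b) gives that no nonzero interior point of any $R(h,p)$ does; and part (c) gives the unit circle. So the zero attractor is contained in $S^1 \cup \bigcup_h (\partial R(h,p) \cap \mathbb D)$, and contains $S^1$ and all the wedge boundaries inside $\mathbb D$. It remains only to identify $\bigcup_h \partial R(h,p) \cap \mathbb D$ geometrically. Since consecutive wedges share the boundary ray $\arg z = (2h+1)\pi/p$, the union of all wedge boundaries inside the open disk is exactly the set of $p$ radial segments $\{ r e^{i(2h+1)\pi/p} : 0 \le r < 1,\ h = 0,\dots,p-1\}$, which is precisely $\{ z \in \mathbb D : z^p = -1\}$ scaled radially — i.e.\ the $p$ spokes $\{z \in \mathbb D : (z/|z|)^p = -1\}$, whose closure adds the $p$ points $z^p = -1$ on $S^1$ already contained in $S^1$. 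Writing this as $S^1 \cup \{z : z^p = -1,\ |z| \le 1\}$ gives the stated form, the ``spokes'' being understood as the segments joining $0$ to these roots.

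One should also dispatch the point $z = 0$ and the origin-versus-branch-cut bookkeeping: $0$ is the common vertex of all $p$ wedges, so it lies on $\partial R(h,p)$ for every $h$ and is captured by the general principle that the union of phase boundaries (with branch-cut contributions) is the attractor; alternatively $0$ is a limit of zeros on the spokes, so it is in the Hausdorff limit regardless. The branch cuts of $L_{h,p}(z)$ lie along the same rays $\arg z = \pi j/p$ that already make up the wedge boundaries, so they contribute nothing new. The main obstacle is the $\Omega_{n,p}(z) \ne 0$ verification: it requires knowing the $b_p$-exponents explicitly (supplied by the Lemma computing $b_k$) and then a genuine if routine argument that a sum of $\varphi(p)$ nonvanishing ``Blaschke-type'' factors with distinct phases cannot cancel on the open wedge — everything else is a direct assembly of Theorem \ref{theorem:asymptotic_form}, the wedge theorem, and the elementary observation that adjacent wedges meet along the $p$-th roots of $-1$ directions.
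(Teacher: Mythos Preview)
Your proposal is correct and follows essentially the same route as the paper: apply Theorem~\ref{theorem:asymptotic_form} to the wedge decomposition established in the preceding theorem, then identify the common boundaries of adjacent wedges as the rays in the directions of the $p$-th roots of $-1$. In fact you are more careful than the paper, which simply asserts ``all we need now do is apply Theorem~\ref{theorem:asymptotic_form}'' and does not spell out the $\Omega_{n,p}(z)\neq 0$ verification or the origin/branch-cut bookkeeping that you flag.
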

       
       \subsection{Odd Partition Polynomials} 
When $p=2,$ we obtain a nondegenerate behavior for the odd parts polynomials.  
Using Remark \ref{arethmetlkcomputation}, we obtain
\begin{align*}
\sqrt{2}L_{h,2k-1}(z) = L_{2k-1}(z)
&=
\frac{1}{2k-1} \sqrt{ Li_2( z^{ 2k-1})},
\\
\sqrt{2}L_{h,2k}(z)= L_{2k}(z)
&=
\frac{1}{k} \sqrt{ Li_2( - z^k) } .
\end{align*}
Once again that we note the connection to the root dilogarithm function 
where $\Re L_k(z) = f_k(z)$ for $k$ odd and  $\Re L_{2k}(z) = f_{1}(-z^{k})/k$ for even indices.

 \begin{theorem}  The only nonempty phases are $R(1),$ $R(2),$ and $R(4).$ \end{theorem}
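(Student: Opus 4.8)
The plan is to identify the phases among the function family $\{\Re L_{h,2k-1}(z)\}\cup\{\Re L_{h,2k}(z)\}$, which by the formulas just established equals $\{f_{2k-1}(z)\}\cup\{f_1(-z^k)/k\}$ up to the common factor $1/\sqrt 2$. The key is that Theorem~\ref{fkdomanancetheorem}, together with Propositions~\ref{f1calculus} and~\ref{prop:decreasing}, lets us eliminate all indices except $1$, $2$, and $4$. First I would handle the odd indices: by Remark~\ref{arethmetlkcomputation} the odd-index phase functions are exactly the $f_k(z)$ with $k$ odd, and part~(3) of Theorem~\ref{fkdomanancetheorem} gives $f_k(z)<f_1(z)$ for $k\ge 2$, $0\le\arg z\le\pi/2$; combined with the reflection symmetry $\vert F_n(\bar z)\vert=\vert F_n(z)\vert$ this covers the whole right half disk, and parts~(1),(2),(4) cover the left half disk, so no odd index $k\ge 3$ ever dominates. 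This leaves $k=1$ among the odd indices.

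Next I would treat the even indices. The even-index phase function of index $2k$ is $f_1(-z^k)/k = f_1(e(1/2)z^k)/k$, i.e.\ a rescaled and rotated copy of $f_1$ on the $k$-fold cover. For $k=1$ this is $f_1(-z)=\Re L_2(z)$; for $k=2$ it is $f_1(-z^2)/2=\Re L_4(z)$. For $k\ge 3$ I would argue that $f_1(-z^k)/k$ is dominated pointwise: writing $w=-z^k$, we have $f_1(-z^k)/k=\tfrac1k\Re\sqrt{Li_2(w)}$ with $\vert w\vert=\vert z\vert^k\le\vert z\vert$, and one compares against $f_1(z)$, $f_2(z)=\tfrac12\Re\sqrt{Li_2(z^2)}$, $f_3(z)$, $f_4(z)=\tfrac14\Re\sqrt{Li_2(z^4)}$ using monotonicity in $\vert z\vert$ and the relation $f_k(z)\le f_k(\vert z\vert)$. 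Concretely, $\tfrac1k f_1(\vert z\vert^k)\le\tfrac1k f_1(\vert z\vert)$ and for $k\ge 3$ this should fall below $\max[f_1(z),f_2(z),f_3(z),f_4(z)]$ after a quadrant-by-quadrant check analogous to the previous subsections: in the region where $\arg(-z^k)$ is small one uses part~(1)/(2) bounds, and elsewhere part~(4). The upshot is that among even indices only $2k=2$ and $2k=4$ survive.

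Having reduced the candidate set to $\{L_1, L_2, L_4\}$, it remains to verify that each of $R(1)$, $R(2)$, $R(4)$ is actually nonempty — otherwise the phase would not be counted. For $R(1)$ this is immediate: on the positive real axis (or a neighborhood of it) $f_1$ strictly dominates all the others by part~(1), so a full neighborhood of $(0,1)$ lies in $R(1)$. For $R(2)$, since $\Re L_2(z)=f_1(-z)$, by the same argument a neighborhood of the negative real axis lies in $R(2)$. For $R(4)$, since $\Re L_4(z)=f_1(-z^2)/2$, this is maximal near the points where $-z^2>0$, i.e.\ near the imaginary axis; one checks via Proposition~\ref{f1calculus} (concavity and positivity of $t\mapsto f_1(it)$) and the decay estimates that $f_1(-z^2)/2$ exceeds both $f_1(z)$ and $f_1(-z)$ on a nonempty open set straddling $\arg z=\pi/2$ — this is the analogue of the ``triple point'' analysis in the weighted case and is where the genuinely nondegenerate behavior enters. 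I expect this last step — showing $R(4)\ne\varnothing$, i.e.\ that $f_1(-z^2)/2$ genuinely wins somewhere rather than being everywhere squeezed between the $k=1,2$ functions — to be the main obstacle, requiring a careful comparison of $f_1(re^{it})$ for $t$ near $\pi/2$ against $f_1(ir'\!,\cdot)$-type values; once all three regions are nonempty and all other indices eliminated, Theorem~\ref{theorem:asymptotic_form} finishes the statement.
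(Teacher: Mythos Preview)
Your proposal has a genuine gap stemming from conflating two different families: the root dilogarithms $f_k(z)=\tfrac1k\Re\sqrt{Li_2(z^k)}$ of Definition~\ref{def:root_dilog}, and the phase functions $\Re L_k(z)$ for the odd-parts problem. These coincide only for odd $k$; for even indices $\Re L_{2m}(z)=f_1(-z^m)/m$, which is \emph{not} $f_{2m}(z)$. When you invoke part~(4) of Theorem~\ref{fkdomanancetheorem} on the left half disk, you obtain $f_k(z)<\max[f_1(z),f_2(z),f_3(z)]$ with $f_2,f_3$ the root dilogarithms --- but $f_2(z)$ is not a phase function in this problem, and $f_3(z)=\Re L_3(z)$ is precisely one of the functions you are trying to eliminate, so this does not show that $\Re L_k$ is dominated by one of $\Re L_1,\Re L_2,\Re L_4$. (Part~(4) also does not apply to $k=3$ at all.) The same confusion recurs in your even-index sketch, where you list $f_2(z)=\tfrac12\Re\sqrt{Li_2(z^2)}$ and $f_4(z)=\tfrac14\Re\sqrt{Li_2(z^4)}$ as comparison targets rather than the actual $\Re L_2,\Re L_4$.

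The paper's argument sidesteps all of this via the relation $\Re L_2(z)=f_1(-z)$, which you record in the non-emptiness discussion but never exploit for domination. One first checks $\Re L_k(z)\le f_3(|z|)$ for every $k\notin\{1,2,4\}$: for odd $k\ge 3$ this is $f_k(z)\le f_k(|z|)\le f_3(|z|)$, and for $k=2m\ge 6$ it is $f_1(-z^m)/m\le f_1(|z|^m)/m\le f_3(|z|)$. Then part~(2) of Theorem~\ref{fkdomanancetheorem} alone finishes both half-disks: if $|\arg z|\le\pi/2$ it gives $f_3(|z|)<f_1(z)=\Re L_1(z)$, and if $|\arg z|>\pi/2$ the same inequality applied to $-z$ gives $f_3(|z|)=f_3(|{-z}|)<f_1(-z)=\Re L_2(z)$. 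No quadrant-by-quadrant casework and no appeal to parts~(1),(3),(4) is needed. You also overestimate the difficulty of $R(4)\ne\varnothing$: at $z=i$ one has $\Re L_4(i)=f_1(1)/2$ while $\Re L_1(i)=\Re L_2(i)=f_1(i)$, and $f_1(i)<f_2(1)=f_1(1)/2$ is exactly the computation recorded in Remark~\ref{remark:f1(ir)}; so the paper's ``inspection near $z=1,-1,i$'' is a one-line check, not a delicate comparison.
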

 \begin{proof} Inspection of $L_{k}(z)$ near the points $z=1,-1,i$ proves that $R(1),$ $R(2),$ and $R(4)$ are nonempty.  For $k>4,$ we note that $\Re L_k(z) \le f_3(\vert z \vert)$ which by Theorem \ref{fkdomanancetheorem} must be bounded by either $\Re L_1(z)=f_1(z)$ or $\Re L_{2}(z)= f_1(-z)$ depending on whether
  $\Re z>0$ or $\Re z<0.$    Hence $R(k)$ is empty.

The case of $k=3$ follows by the same Lemma since
\[
\Re L_{3}(z) = f_3(z) \leq f_3(\vert z \vert) \leq \max\left( \Re L_1(z),\Re L_2(z)\right).
\]
 \end{proof}
 
Using Theorem \ref{theorem:asymptotic_form} we record the following proposition.

 \begin{proposition}\label{prop:partitionpolyphase}
The zero attractor for the odd parts partition polynomials is given by
  \[
  S^1 \cup \partial R(1) \cup \partial R(2) \cup \partial R(4) .
  \]
  \end{proposition}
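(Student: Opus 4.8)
The plan is to apply Theorem~\ref{theorem:asymptotic_form} to the odd parts partition polynomials, so the work is to verify the hypotheses of that theorem and then assemble the conclusion. The three phases $R(1), R(2), R(4)$ inside $\mathbb D$ have already been identified as the only nonempty phases in the preceding theorem, so parts~(a) and (b) of Theorem~\ref{theorem:asymptotic_form} will immediately give that the zero attractor contains $\partial R(1) \cup \partial R(2) \cup \partial R(4)$ inside $\mathbb D$ and that no nonzero interior point of any phase lies in the attractor; part~(c) of that same theorem, together with part~(c) of the earlier Proposition on the outer asymptotics (which confines the attractor to $\overline{\mathbb D}$), will give the unit circle $S^1$ and rule out points outside $\overline{\mathbb D}$. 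Thus the attractor is exactly $S^1 \cup \partial R(1) \cup \partial R(2) \cup \partial R(4)$, provided the standing hypothesis $\Omega_{n,m_j}(z) \ne 0$ for $0 \ne z \in R(m_j)$ is checked for $m_j \in \{1,2,4\}$.

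The key steps I would carry out, in order, are: first, recall from the explicit $b_k$ computations in the $p=2$ example that $\omega_{1,1,n}(z)=1$, $\omega_{1,2,n}(z)=(-1)^n$, and $\omega_{1,4,n}(z) = i^{-n}\sqrt[4]{(z-i)/(z+i)}$, $\omega_{3,4,n}(z) = i^{n}\sqrt[4]{(z+i)/(z-i)}$, and hence that $\Omega_{n,1}(z)$ and $\Omega_{n,2}(z)$ are nonvanishing constants of modulus $1$, while $\Omega_{n,4}(z) = \omega_{1,4,n}(z) + \omega_{3,4,n}(z)$; second, invoke the already-noted fact that $\omega_{1,4,n}(z) = -\omega_{3,4,n}(z)$ only if $z=0$, which is precisely the statement that $\Omega_{n,4}(z)\ne 0$ for $0\ne z\in R(4)$ (indeed for all $z\ne 0$ in $\mathbb D$ away from the branch points $\pm i$); third, confirm that the branch-cut loci of $L_1, L_2, L_4$ — namely the negative real axis for $L_1$, the positive real axis for $L_2$ (from $Li_2(-z^k)$-type terms), and the imaginary axis for $L_4$ — either lie outside the relevant phases or are already contained in the union of phase boundaries, so that the ``further contributions from branch cuts'' clause in the discussion preceding the theorem adds nothing new; fourth, apply Theorem~\ref{theorem:asymptotic_form}(a),(b),(c) to conclude.

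The main obstacle I anticipate is the verification that $\Omega_{n,4}(z) \ne 0$ on $R(4)$ rigorously and uniformly enough to feed into part~(b) of Theorem~\ref{theorem:asymptotic_form} (which requires the limit $\tfrac{1}{2\sqrt\pi}\Omega_{n,m}(z)\sqrt{L_m(z)}$ to be genuinely nonzero on compacta), since $\Omega_{n,4}$ depends on $n$ through the factors $i^{\mp n}$; one must check that the sum $i^{-n}\sqrt[4]{(z-i)/(z+i)} + i^{n}\sqrt[4]{(z+i)/(z-i)}$ does not vanish for any $n$, which amounts to showing $\bigl(\sqrt[4]{(z-i)/(z+i)}\bigr)^2 = (z-i)/(z+i) \ne -i^{2n} = \pm 1$ for $z\in R(4)\setminus\{0\}$ — equivalently $z\ne 0$ and $z$ not real, both of which hold on the phase $R(4)$ since $R(4)$ is contained in the open upper-or-lower quadrant region near $z=\pm i$. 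A secondary, more bookkeeping-heavy obstacle is making the branch-cut analysis airtight: one must confirm that along each branch cut the second asymptotic form~\eqref{eq:form2} still produces zeros only on $\partial R(1)\cup\partial R(2)\cup\partial R(4)$ and on $S^1$, using Proposition~\ref{f1calculus} and Proposition~\ref{prop:decreasing} to control the sign of $L_m$ near those rays, so that no spurious curves are introduced. Once these points are settled the proposition follows directly, exactly as in the analogous weighted-partition case (Proposition~\ref{prop:partitionpolyphase} above).
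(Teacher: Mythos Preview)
Your proposal is correct and follows essentially the same approach as the paper: the paper's proof is the single sentence ``Using Theorem~\ref{theorem:asymptotic_form} we record the following proposition,'' and you are doing exactly that while spelling out the hypothesis checks (the nonvanishing of $\Omega_{n,1}$, $\Omega_{n,2}$, $\Omega_{n,4}$, the last of which the paper already recorded in the $p=2$ Example as ``$\omega_{1,4,n}(z)=-\omega_{3,4,n}(z)$ only if $z=0$''). Your extra care about branch cuts goes a bit beyond what the paper writes out at this point, but it is consistent with the framework and does no harm; one minor slip is the parenthetical ``equivalently $z\ne 0$ and $z$ not real'' in your $\Omega_{n,4}$ analysis --- in fact $(z-i)/(z+i)=1$ is impossible and $(z-i)/(z+i)=-1$ forces $z=0$, so only $z=0$ is excluded, not the real axis.
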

However, we can go a bit further and identify curves to the boundaries of $R(m).$

\begin{proposition}\label{positivitybeta}
On $(0,1]$,
there is a unique solution $\beta$ to $f_1(i r)= f_2(r)$; further,  $3/4< \beta <1$.  
\end{proposition}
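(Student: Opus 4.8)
The plan is to treat everything as a function of the single real variable $r\in(0,1]$ and to study the sign of $g(r)=f_1(ir)-f_2(r)$. First I would put both terms in a usable form. Since $r^{2}\in(0,1]$, the value $\mathrm{Li}_2(r^{2})$ is a real number in $(0,\pi^{2}/6]$, so $f_2(r)=\tfrac12\sqrt{\mathrm{Li}_2(r^{2})}$ is a positive real-analytic function on $(0,1)$; by Proposition~\ref{f1calculus}, $r\mapsto f_1(ir)$ is positive, strictly increasing, and concave on $(0,1)$. Thus $g$ is continuous on $(0,1]$ and extends continuously by $g(0)=0$.

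For existence and the bound $\beta<1$: near $r=0$ the expansions $\mathrm{Li}_2(ir)=ir+O(r^{2})$ and $\mathrm{Li}_2(r^{2})=r^{2}+O(r^{4})$ give $f_1(ir)=\sqrt{r/2}+o(\sqrt r)$ while $f_2(r)=r/2+o(r)$, so $g(r)>0$ for all sufficiently small $r>0$. At $r=1$ I would use the classical value $\mathrm{Li}_2(i)=-\pi^{2}/48+iG$, where $G$ is Catalan's constant, together with the elementary identity $(\Re\sqrt w)^{2}=\tfrac12(|w|+\Re w)$ for the principal square root. This gives $f_1(i)^{2}=\tfrac12\bigl(\sqrt{\pi^{4}/2304+G^{2}}-\pi^{2}/48\bigr)$, and comparing with $f_2(1)^{2}=\pi^{2}/24$ reduces the desired inequality $f_1(i)<f_2(1)$ to $G^{2}<\pi^{4}/96$, which holds because $G<1$ and $\pi^{4}>96$. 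Hence $g(1)<0$, and by the intermediate value theorem $g$ vanishes somewhere in $(0,1)$; in particular every solution $\beta$ satisfies $\beta<1$.

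For uniqueness and the bound $\beta>3/4$: I claim $g$ is strictly concave on $(0,1)$. Since $f_1(ir)$ is concave, it suffices that $f_2$ be strictly convex. Writing $f_2(r)=\tfrac12\,r\,\psi(r)$ with $\psi(r)=\bigl(\int_{0}^{1}\tfrac{-\ln t}{1-r^{2}t}\,dt\bigr)^{1/2}$, which uses $\mathrm{Li}_2(r^{2})=\int_{0}^{1}\tfrac{-r^{2}\ln t}{1-r^{2}t}\,dt$, one checks that for each fixed $t\in(0,1)$ the map $r\mapsto(1-r^{2}t)^{-1}$ is log-convex on $[0,1)$ (its logarithm has second derivative $2t(1+r^{2}t)/(1-r^{2}t)^{2}>0$); since log-convexity is stable under the positive integral $\int_{0}^{1}(-\ln t)(\cdots)\,dt$ by H\"older's inequality, $\psi(r)^{2}$ is log-convex, hence $\psi$ is log-convex, hence convex, and $\psi$ is increasing. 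Then $(r\psi)''=2\psi'+r\psi''\ge 2\psi'>0$, so $f_2$ is strictly convex and $g$ is strictly concave. A strictly concave $g$ with $g(0^{+})=0$, $g>0$ just above $0$, and $g(1)<0$ must rise to a single maximum and then strictly decrease, so it has exactly one zero in $(0,1)$, which is $\beta$. Finally, since $g$ changes sign exactly once (from $+$ to $-$), $\beta>3/4$ follows from the single numerical inequality $g(3/4)>0$: using $f_1(3i/4)^{2}=\tfrac12\bigl(|\mathrm{Li}_2(3i/4)|+\Re\,\mathrm{Li}_2(3i/4)\bigr)$ with $\Re\,\mathrm{Li}_2(3i/4)=\tfrac14\sum_{m\ge1}(-9/16)^{m}/m^{2}>-9/64$, the alternating-series bound $|\mathrm{Li}_2(3i/4)|\ge|\Im\,\mathrm{Li}_2(3i/4)|=\sum_{k\ge0}(-1)^{k}(3/4)^{2k+1}/(2k+1)^{2}>3/4-(3/4)^{3}/9=45/64$, and $\mathrm{Li}_2(9/16)\le 9/16+\tfrac14(9/16)^{2}/(7/16)=333/448$, one obtains $f_1(3i/4)^{2}>9/32>333/1792\ge f_2(3/4)^{2}$.

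The step I expect to be the real work is the strict convexity of $f_2$: one must set up the integral representation of $\mathrm{Li}_2$ carefully and verify the stability of log-convexity under integration, and one must confirm that $g$ is \emph{strictly} (not merely weakly) concave so the ``exactly one zero'' conclusion is airtight. The endpoint estimates at $r=0$, $3/4$, and $1$ are routine once the standard value $\mathrm{Li}_2(i)=-\pi^{2}/48+iG$ and the duplication formula $\mathrm{Li}_2(z)+\mathrm{Li}_2(-z)=\tfrac12\mathrm{Li}_2(z^{2})$ are in hand.
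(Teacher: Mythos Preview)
Your proof is correct and follows the same overall strategy as the paper: show that $g(r)=f_1(ir)-f_2(r)$ is strictly concave (equivalently, the paper's $h=-g$ is convex), so that $g$ has at most two zeros, one of which is at $r=0$; then check the signs at $r=1$ and near $r=3/4$. The technical details differ in two places. First, for the convexity of $f_2$ the paper computes $f_2''(r)$ directly and verifies that the auxiliary function $2r^{2}\mathrm{Li}_2(r^{2})-(1-r^{2})(\ln(1-r^{2}))^{2}+(1-r^{2})\ln(1-r^{2})$ has only nonnegative Taylor coefficients, whereas you go through an integral representation and a log-convexity argument; both work, and yours is a pleasant alternative. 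Second, for the bound $\beta>3/4$ the paper avoids any single-point numerical estimate by using the global inequalities $f_2(r)<\tfrac{\pi r}{2\sqrt6}$ and $f_1(ir)>\tfrac{\pi\sqrt{2r}}{8}$ (its Lemmas on $|Li_2(z)|$ and on $f_1(ir)$), which together give $g(r)>0$ on the whole interval $(0,3/4)$; this is a bit cleaner than your direct evaluation at $r=3/4$, though your estimates there are fine.
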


\begin{theorem}\label{Oddpartsphases}  
There exists exactly three phases $R(1),$ $R(2),$ and $R(4).$
\begin{enumerate}
\item 
The boundary of $R(4)$ consists of $ \gamma$
$-\gamma,$ $\overline{\gamma},$ 
and $-\overline{\gamma}$
where $\gamma$ is the level set
$ \{ z :  \Re L_1(z) = \Re L_4(z);  \Re z , \Im z \geq 0\}$.
\item 
The boundary of $R(1)$ is $  i [0,\beta] \cup  i  [-\beta,0] \cup\gamma \cup \overline{\gamma}.$
\item 
The boundary of $R(2)$ is $i [0,\beta]\cup  i [-\beta,0]\cup-\gamma \cup -\overline{\gamma}.$
\end{enumerate}
\end{theorem}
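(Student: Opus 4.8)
The plan is to pin down each phase boundary by combining the rotational/reflection symmetries of $\Re L_k(z)$ with the one‑variable calculus of $f_1$ and $f_2$ from Propositions~\ref{f1calculus} and~\ref{prop:decreasing}, and then invoke Theorem~\ref{theorem:asymptotic_form}. First I would record the relevant symmetries: since $\Re L_1(z)=f_1(z)$ is even in $\arg z$ about $0$ and $\Re L_2(z)=f_1(-z)$ is even about $\pi$, while $\Re L_4(z)=f_1(-z^2)/2$ is even about both $\pi/2$ and $0$, the whole competition among $\{\Re L_1,\Re L_2,\Re L_4\}$ is invariant under the Klein four‑group generated by $z\mapsto\bar z$ and $z\mapsto -z$. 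Consequently it suffices to determine the phase structure in the closed first quadrant; the rest follows by reflecting, which is exactly why the boundary pieces come in the quadruples $\pm\gamma,\pm\overline{\gamma}$ and the axis segments $i[0,\beta]\cup i[-\beta,0]$ appear symmetrically in $\partial R(1)$ and $\partial R(2)$.

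Second, I would work on the positive imaginary axis. On the segment $z=ir$, $0<r<1$, we have $\Re L_1(ir)=f_1(ir)$, $\Re L_2(ir)=f_1(-ir)=f_1(ir)$ (by conjugation symmetry of $f_1$ on the imaginary axis), and $\Re L_4(ir)=f_1(r^2)/2=f_2(r)$. Proposition~\ref{positivitybeta} gives a unique $\beta\in(3/4,1)$ with $f_1(i\beta)=f_2(\beta)$, and the monotonicity facts (Proposition~\ref{f1calculus} for $f_1(ir)$ increasing, plus the corresponding behavior of $f_2(r)=f_1(r^2)/2$) show that $f_1(ir)>f_2(r)$ for $r<\beta$ and $f_1(ir)<f_2(r)$ for $r>\beta$. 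Thus along the imaginary axis $R(1)$ (equivalently $R(2)$, which coincides with $R(1)$ there by the equality $\Re L_1=\Re L_2$) wins on $i(0,\beta)$ and $R(4)$ wins on $i(\beta,1)$. The point $z=i\beta$ is therefore a triple‑type boundary point, and the segment $i[0,\beta]$ is a common boundary of $R(1)$ and $R(2)$ once we know $R(4)$ separates them past $i\beta$ — this is where the symmetry $z\mapsto-z$ swapping $R(1)\leftrightarrow R(2)$ is used to conclude the axis is in both boundaries.

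Third, I would handle the open first quadrant. Here $\Re L_2(z)=f_1(-z)<f_1(z)=\Re L_1(z)$ for $\Re z>0$ by Theorem~\ref{fkdomanancetheorem} (the root‑dilogarithm dominance: $f_1(z)>f_k(z)$ for $0\le\arg z\le\pi/2$, applied with the argument reflected), so $R(2)$ does not enter the open right half‑plane and the only competition in the first quadrant is between $\Re L_1$ and $\Re L_4$. I would define $\gamma=\{z:\Re L_1(z)=\Re L_4(z),\ \Re z,\Im z\ge 0\}$ and argue it is a single simple arc from the point $i\beta$ on the imaginary axis to a point on the unit circle, with $\Re L_1>\Re L_4$ on the side toward the positive real axis and $\Re L_4>\Re L_1$ on the side toward $i$; this uses Proposition~\ref{prop:decreasing} (both $t\mapsto f_1(re^{it})$ and $t\mapsto f_1(-r^2 e^{2it})=2f_4(re^{it})$ are monotone in $t$ on the relevant range, but with opposite monotonicity near $t=\pi/2$, forcing a unique crossing on each circle $|z|=r$) together with the implicit function theorem to get that $\gamma$ is a smooth curve. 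Then $R(1)\cap\{\Re z,\Im z>0\}$ is the region between the positive real axis and $\gamma$, and $R(4)\cap\{\Re z,\Im z>0\}$ is the region between $\gamma$ and the positive imaginary axis above $i\beta$. Assembling the four reflected copies gives $\partial R(4)=\gamma\cup(-\gamma)\cup\overline{\gamma}\cup(-\overline{\gamma})$, and gluing the quadrant pictures along the axes gives $\partial R(1)=i[0,\beta]\cup i[-\beta,0]\cup\gamma\cup\overline{\gamma}$ and $\partial R(2)=i[0,\beta]\cup i[-\beta,0]\cup(-\gamma)\cup(-\overline{\gamma})$. Finally, nonemptiness of all three phases is the earlier theorem (inspection near $z=1,-1,i$), and Theorem~\ref{theorem:asymptotic_form} converts these boundaries into the zero attractor; one must also check the branch‑cut hypothesis, namely that $\Omega_{n,m}\ne0$ on each phase, which for the odd‑parts case reduces to the explicit $\omega$ computations already given.

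The main obstacle I expect is the third step: rigorously showing that the level set $\{\Re L_1=\Re L_4\}$ in the first quadrant is exactly one arc joining $i\beta$ to the unit circle — i.e. that there is no extra branch, no self‑intersection, and the right sign of the difference on each side. The monotonicity propositions control the behavior on individual circles $|z|=r$, so "one crossing per circle" should follow, but one still has to verify that these crossings vary continuously in $r$ from $r=1$ down to the value where the crossing reaches the imaginary axis (which must be $i\beta$ by Step two), and that $\gamma$ actually terminates on $S^1$ rather than spiraling or hitting the real axis; establishing this monotone‑in‑$r$ parametrization of $\gamma$ and its endpoints is the technical heart of the argument.
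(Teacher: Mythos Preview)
Your approach is essentially the paper's: reduce by the symmetries $z\mapsto -z$ and $z\mapsto\bar z$ to the upper quarter disk, eliminate $\Re L_2$ there (note that $\Re L_2(z)=f_1(-z)$, so the right reference is Proposition~\ref{prop:decreasing}, not Theorem~\ref{fkdomanancetheorem}), and then compare $\Re L_1$ with $\Re L_4$ circle by circle. The obstacle you flag as the ``technical heart'' is exactly what the paper dispatches in one line: $h(re^{it})=f_1(re^{it})-\tfrac12 f_1(-r^2e^{2it})$ is strictly decreasing in $t$ on $[0,\pi/2]$ by Proposition~\ref{prop:decreasing} (the first term decreases, the second increases), so the intermediate value theorem together with Proposition~\ref{positivitybeta} gives precisely one crossing per circle for $r\in(\beta,1]$ and none for $r<\beta$, which already parametrizes $\gamma$ by $r$ and rules out extra branches, spiraling, or contact with the real axis---no implicit function theorem is needed.
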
  
\begin{proof}
Since  $\Re L_1(-z)=\Re L_2(z)$ and  $ \Re L_4(z)=\Re L_4(-z)$,
 we need only classify the phases on the upper quarter disk 
 $\mathbb{D}^{++}.$ On this quarter disk, we have by Proposition \ref{prop:decreasing} that 
 $\Re L_1(z) > \Re L_2(z)$, $z \neq 0$. 
It remains to examine $\Re L_1(z)$ and $\Re L_4(z).$
Consider the function
 \[
 h(z)= \Re L_1(z)-\Re L_{4}(z)=f_1(z)-\frac{1}{2}f_1(-z^2).
 \]  
 If $z\in \mathbb{D}^{++}$ and $h(z)>0$ then $z\in R(1)$ and if $h(z)<0$ then $z\in R(4).$  
 But by Proposition \ref{prop:decreasing} $h(re^{it})$ is decreasing.

The theorem now follows from Proposition \ref{positivitybeta}, and the intermediate values theorem.
\end{proof}  
   
The more detailed reduction of the zero attractor now follows directly from Theorem \ref{theorem:asymptotic_form}.

\begin{theorem} \label{innerasym} 
          If $A$ is the zero attractor of $F_n(z)$ then \[
 A= S^1 \cup \gamma \cup -\gamma \cup \overline{\gamma} \cup - \overline{\gamma}\cup i[-\beta, \beta] .
  \]
   \end{theorem}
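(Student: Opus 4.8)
The plan is to assemble the final description of the zero attractor $A$ from the pieces already established, using Theorem \ref{theorem:asymptotic_form} as the master tool. Recall that the zero attractor of a partition polynomial sequence whose phases inside $\mathbb D$ are $R(m_1),\dots,R(m_\ell)$ is the union of the unit circle together with the boundaries of those phases (with possible contributions from branch cuts). By the preceding theorem we know the phases are exactly $R(1)$, $R(2)$, and $R(4)$, and Theorem \ref{Oddpartsphases} gives each of $\partial R(1)$, $\partial R(2)$, $\partial R(4)$ explicitly in terms of the level curve $\gamma$ and the segment $i[-\beta,\beta]$. So the first step is simply to record
\[
\partial R(1)\cup\partial R(2)\cup\partial R(4)
=
\gamma\cup(-\gamma)\cup\overline\gamma\cup(-\overline\gamma)\cup i[-\beta,\beta],
\]
which follows by taking the union of the three boundary descriptions in Theorem \ref{Oddpartsphases} and noting that $i[0,\beta]\cup i[-\beta,0]=i[-\beta,\beta]$, while the four reflected copies of $\gamma$ appear among the three boundaries. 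This already produces the set on the right-hand side of the statement.

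Next I would verify that the hypotheses of Theorem \ref{theorem:asymptotic_form} are met, namely that $\Omega_{n,m}(z)\neq 0$ for $0\neq z\in R(m)$ with $m=1,2,4$. For $m=1$ and $m=2$ the relevant $\omega$ functions are, from the $p=2$ computation, $\omega_{1,1,n}(z)=1$ and $\omega_{1,2,n}(z)=(-1)^n$, both manifestly nonzero. For $m=4$ we have $\Omega_{n,4}(z)=\omega_{1,4,n}(z)+\omega_{3,4,n}(z)$ with $\omega_{1,4,n}(z)=i^{-n}\sqrt[4]{(z-i)/(z+i)}$ and $\omega_{3,4,n}(z)=i^{n}\sqrt[4]{(z+i)/(z-i)}$; since it was checked directly that $\omega_{1,4,n}(z)=-\omega_{3,4,n}(z)$ only when $z=0$, the sum is nonzero on $R(4)\setminus\{0\}$. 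With these checks in place, parts (a) and (c) of Theorem \ref{theorem:asymptotic_form} give that every point of $\mathbb D\cap(\partial R(1)\cup\partial R(2)\cup\partial R(4))$ and every point of $S^1$ lies in $A$, while part (b) gives that no nonzero point of $R(1)\cup R(2)\cup R(4)$ lies in $A$; together with Proposition 2(c), which confines $A$ to $\overline{\mathbb D}$, this shows $A\subseteq S^1\cup\gamma\cup(-\gamma)\cup\overline\gamma\cup(-\overline\gamma)\cup i[-\beta,\beta]$ and the reverse inclusion for all points except possibly those on the branch cuts.

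The one subtlety — and the place where a little care is needed rather than a genuine obstacle — is the branch-cut segment $i[-\beta,\beta]$: $L_1(z)=\sqrt{Li_2(z)}$ has a branch cut along the imaginary axis (where $\arg z=\pi/2$, one of the rays $\arg z=\pi j/k$), and there the asymptotics \eqref{eq:form2} apply instead of \eqref{eq:form1}. So I must argue that the real-part balancing that puts $i[0,\beta]$ into $\partial R(1)\cap\partial R(2)$ is exactly the condition $f_1(ir)=f_2(r)$ of Proposition \ref{positivitybeta}, which holds precisely for $0<r\le\beta$; for $r>\beta$ the phase $R(1)$ (resp.\ $R(2)$) strictly dominates on one side, so those points of the imaginary axis are interior to a single phase and, by the nonvanishing of the oscillatory prefactor in \eqref{eq:form2}, are not in $A$. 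Thus the imaginary-axis contribution to $A$ is exactly $i[-\beta,\beta]$, matching the claimed set. Finally, assembling the two inclusions yields
\[
A = S^1\cup\gamma\cup(-\gamma)\cup\overline\gamma\cup(-\overline\gamma)\cup i[-\beta,\beta],
\]
which is the assertion. The main thing to be careful about is precisely this interplay between the branch cut on the imaginary axis and the phase boundary there; everything else is a direct bookkeeping application of the already-proved theorems.
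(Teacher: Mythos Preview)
Your overall approach matches the paper's: the paper simply states that the result ``follows directly from Theorem~\ref{theorem:asymptotic_form},'' and your first three paragraphs correctly unpack this---take the union of the three phase boundaries from Theorem~\ref{Oddpartsphases}, verify the nonvanishing of $\Omega_{n,m}$, and invoke parts (a), (b), (c) of Theorem~\ref{theorem:asymptotic_form} together with Proposition~2(c). That much is fine and is all that is needed.

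Your ``subtlety'' paragraph, however, contains two factual errors. First, $L_1(z)=\tfrac{1}{\sqrt{2}}\sqrt{Li_2(z)}$ does \emph{not} have a branch cut on the imaginary axis: the branch cut occurs where $Li_2(z)\le 0$, namely on $[-1,0]$. (Likewise $L_2$ has its cut on $[0,1]$ and $L_4$ on $[-1,1]$; none touch the imaginary axis.) So equation~\eqref{eq:form2} is irrelevant here. Second, for $r>\beta$ on the imaginary axis it is not $R(1)$ or $R(2)$ that dominates but $R(4)$: by Proposition~\ref{positivitybeta} one has $f_1(ir)<f_2(r)$ there, i.e.\ $\Re L_1(ir)=\Re L_2(ir)<\Re L_4(ir)$, so $ir\in R(4)$.

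The good news is that this paragraph is unnecessary. The segment $i[-\beta,\beta]$ is already handled by part~(a) of Theorem~\ref{theorem:asymptotic_form} as a genuine phase boundary: on the entire imaginary axis $\Re L_1(ir)=f_1(ir)=f_1(-ir)=\Re L_2(ir)$ by the conjugate symmetry of $f_1$, and for $0<r\le\beta$ this common value strictly exceeds $\Re L_4(ir)$, so $ir\in\partial R(1)\cap\partial R(2)$ exactly as Theorem~\ref{Oddpartsphases} records. Since $\Re L_1(z)=f_1(z)$ and $\Re L_2(z)=f_1(-z)$ are distinct harmonic functions (they disagree off the imaginary axis), they are not harmonic continuations of one another, and Sokal's criterion applies without any branch-cut analysis. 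Simply delete the last paragraph and your argument is complete.
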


     \begin{figure}\label{fig1}
     \begin{center}
        \includegraphics[height=4.5cm,width=4.5cm]{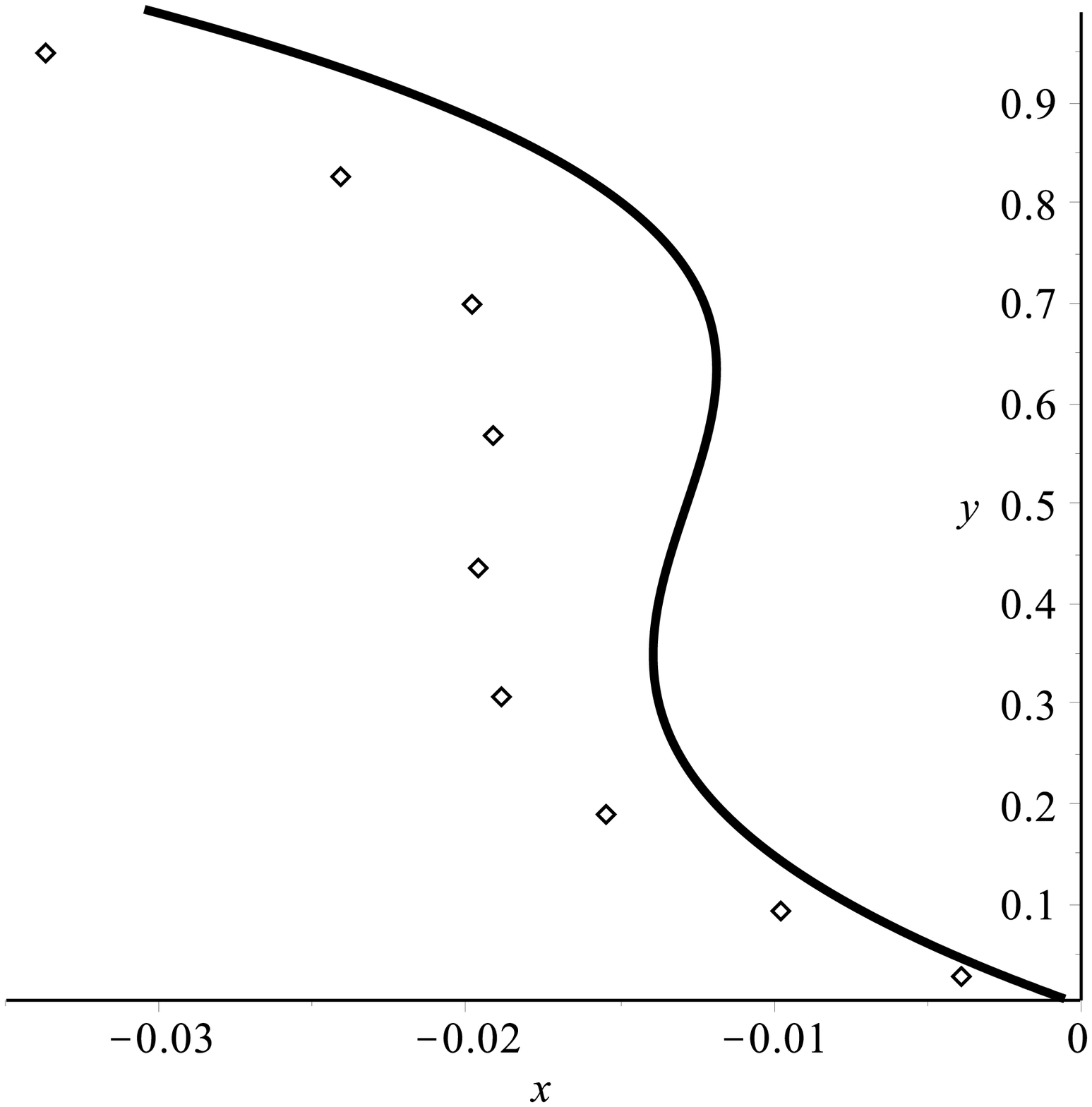} 
        \quad
\includegraphics[height=5.0cm,width=5.0cm]{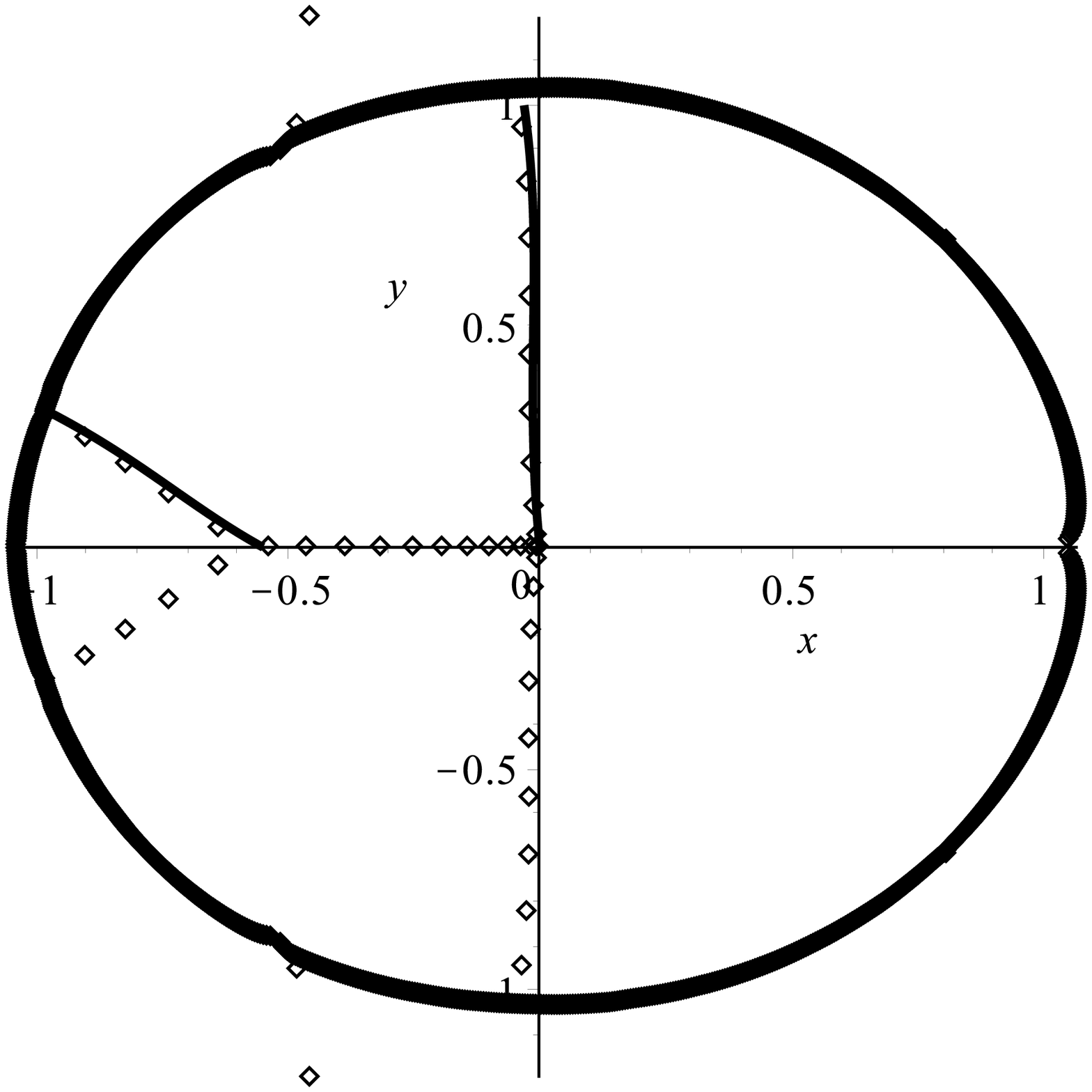} 
\qquad
\includegraphics[height=4.5cm,width=4.5cm]{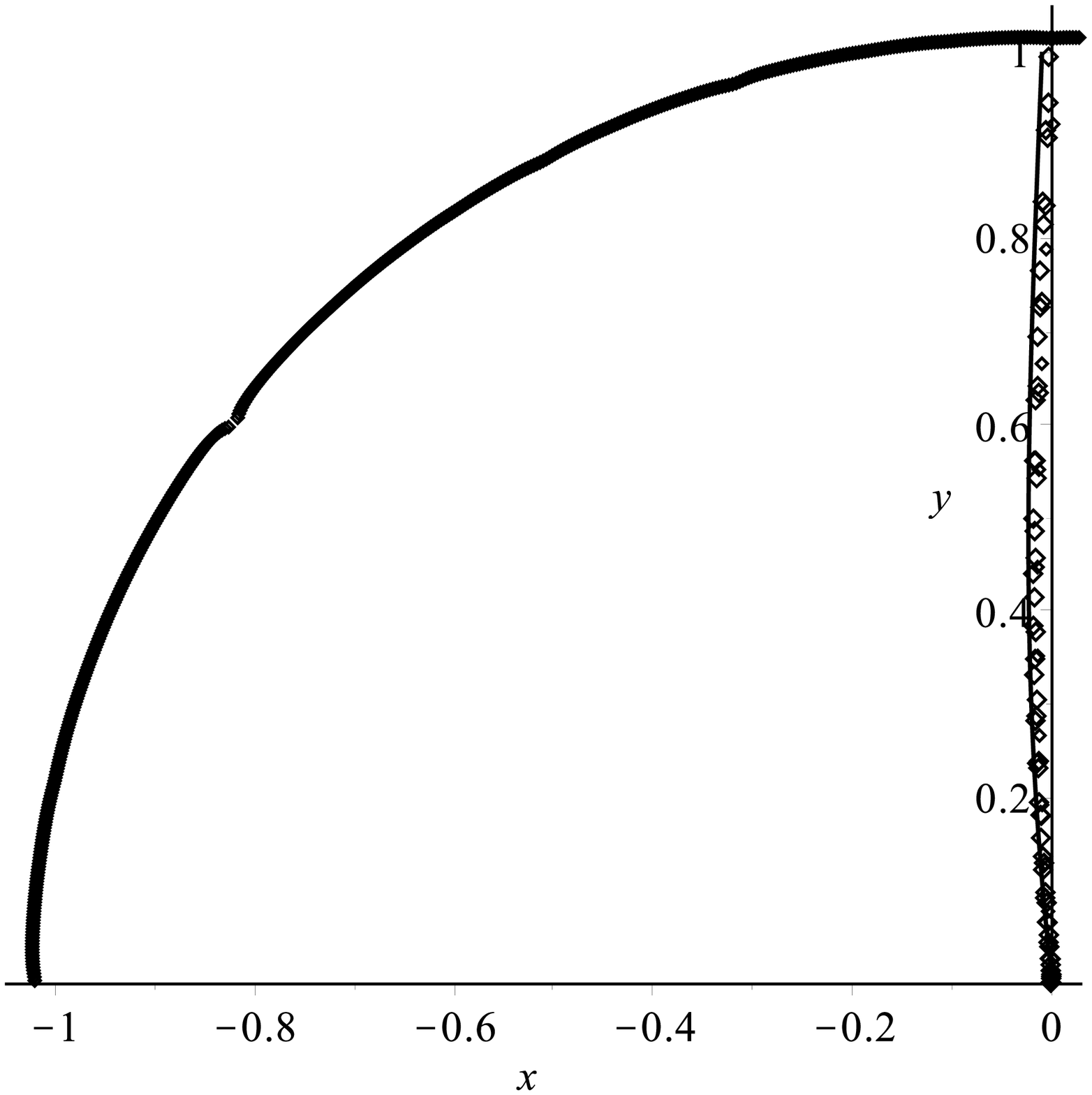} 
\end{center}
\caption{Closeups of  zero attractor inside unit disk:
(a) Closeup: parts are congruent to 0 or 2 modulo 3 \,
(b); parts are congruent to 0 or 2 modulo 3
\,
(c) parts are congruent to 1 or 4 modulo 5,
}
\end{figure}

  \begin{figure}\label{fig2}
       \begin{center}
        \includegraphics[height=4.5cm,width=4.5cm]{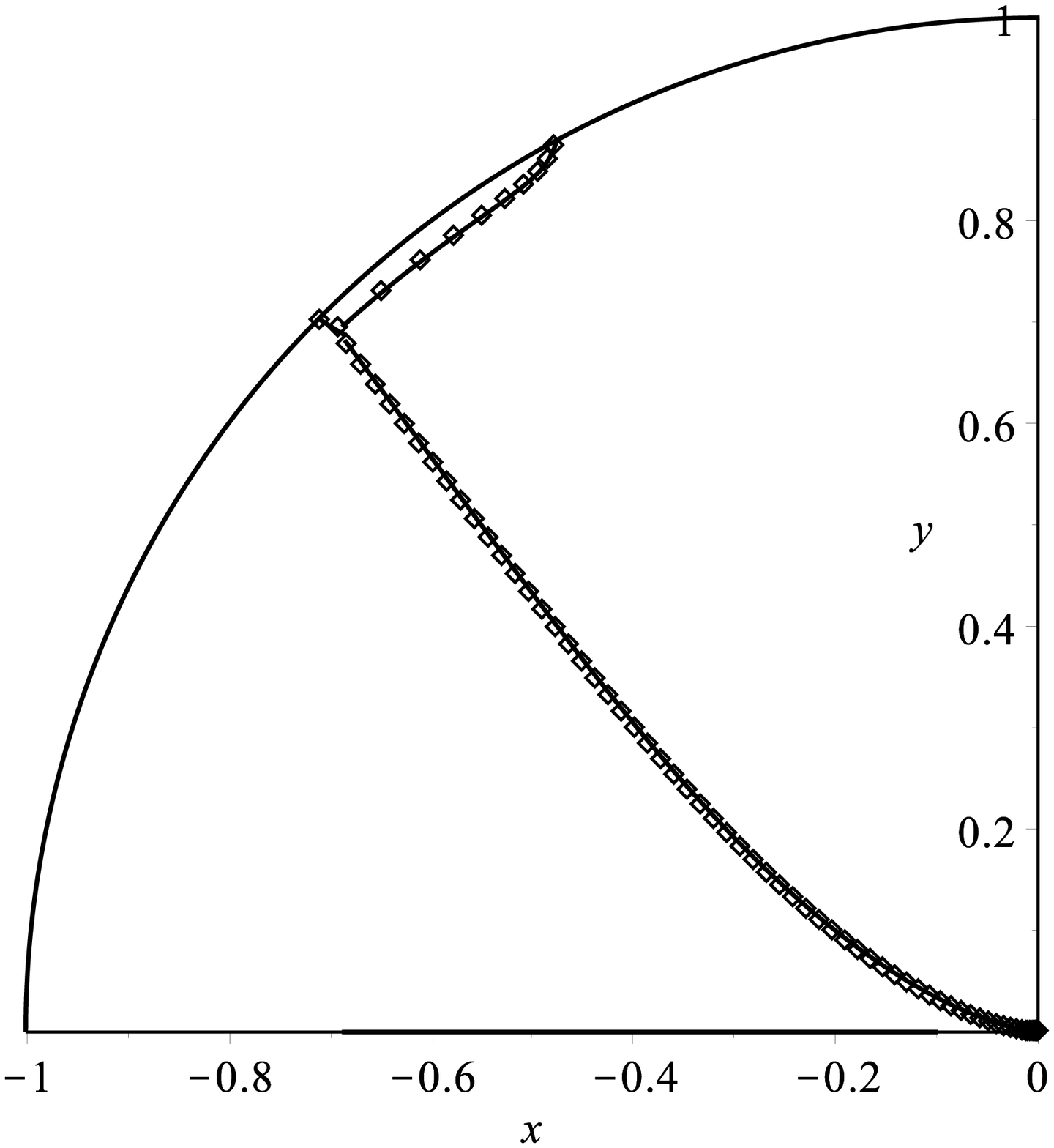} 
        \quad
\includegraphics[height=5.0cm,width=5.0cm]{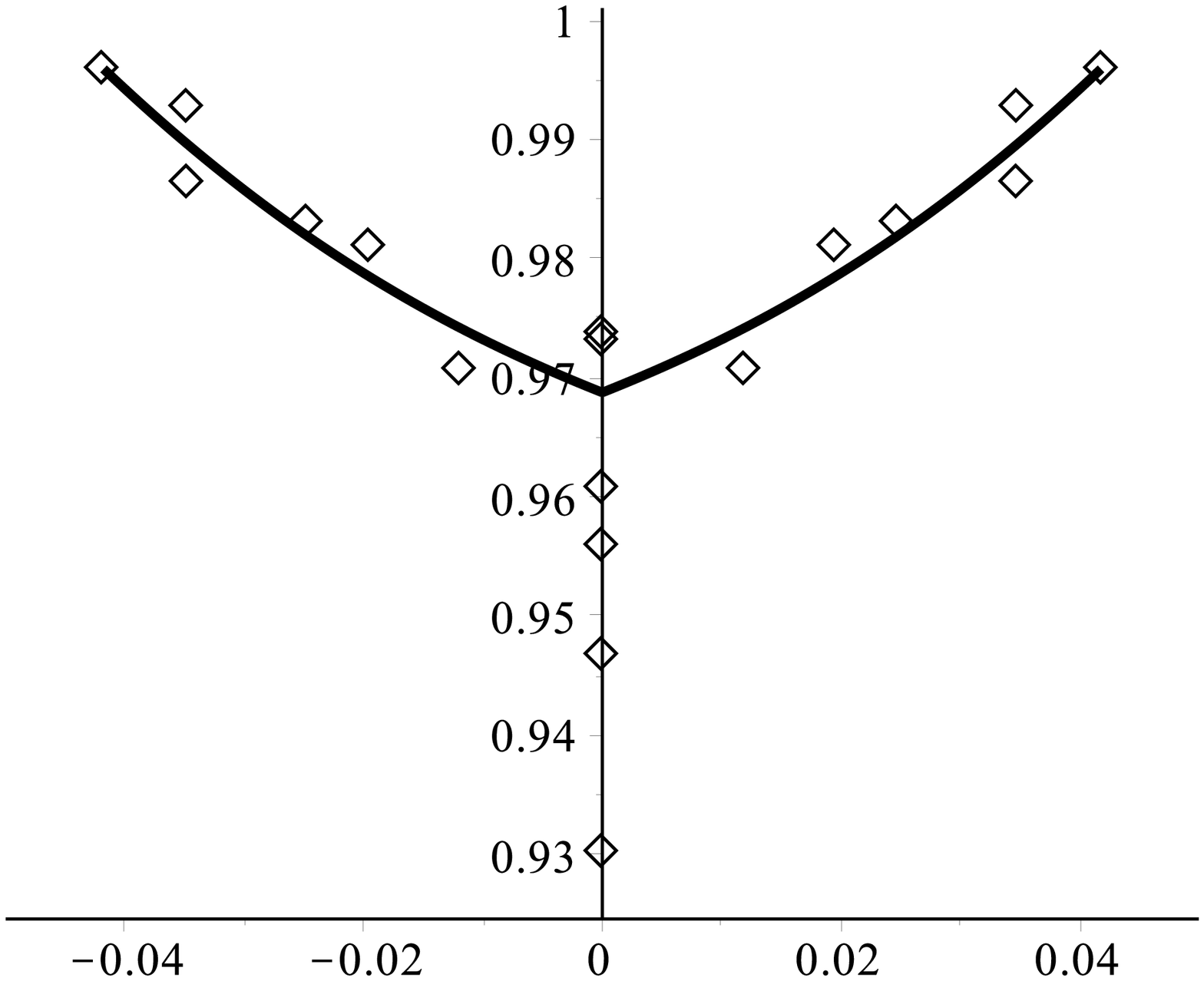} 
\qquad
\includegraphics[height=4.5cm,width=4.5cm]{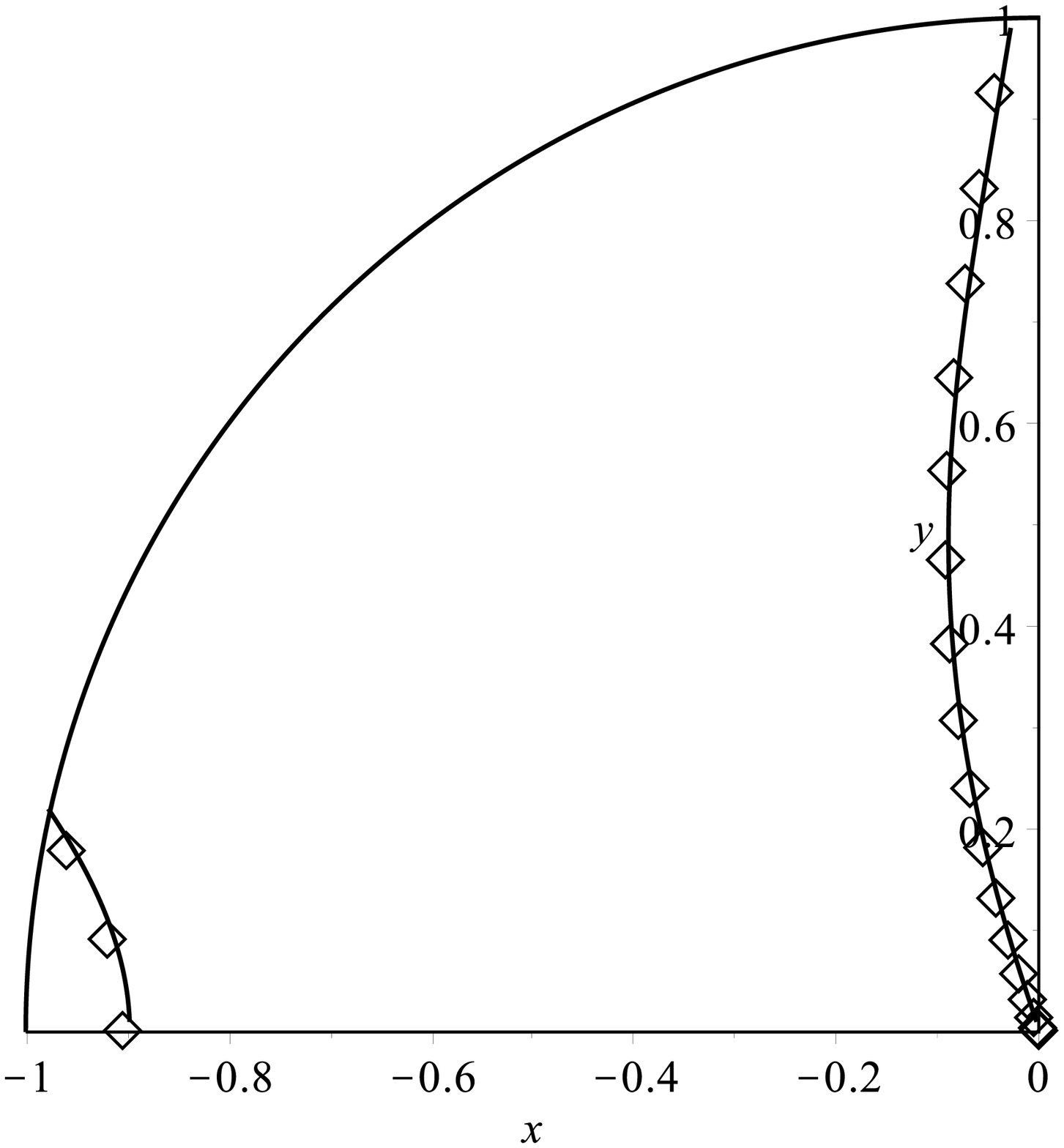} 
\end{center}
\caption{Closeups of  zero attractor inside unit disk:
(a) all parts, zeros of degree 25,000; \,
(b) parts are odd, zeros of degree $2^k$, $k=12,\dots,15$;
\,
(c) parts are congruent to 1 or 2 modulo 3, degree 2500
}
\end{figure}

\section{Further Examples and Conclusion}
This framework for the zeros of partition polynomials is quite deep and general.  
For mathematical convenience we restricted ourselves to the cases that can be fully worked out analytically,
 but this framework generalizes beyond just these cases. 

Here are a few more examples, where we have done a numerical setup and computed potential phase functions  
but have yet to identify 
$\partial R(m).$
In these cases, we  require that the partition parts   are  solutions to $x^2 \equiv 1 \mod p$; that is, they lie in the residue classes $1$ or $p-1$ modulo $p$.
The calculation of the corresponding phase functions   follow  from the same method as those for linear congruences.

Let $(h,k)=1$ with $1 \leq h < k$. Then set
\[
L_{h,k}(z)^2 
=
\frac{ (k,p)^2}{ k^2 p} 
\left(
Li_2(  e_{ (k,p)}( h ) z^{ k/(k,p)} ) + Li_2( e_{ (k, p)}( -h) z^{ k / (k, p)} 
\right) .
\]
When we let $p=3,$ 
\[
L_{h,3k}(z) = L_k(z) = \frac{1}{k\sqrt{p}}\sqrt{2Li_2(z^k)}
\]
and for $k$  not divisible by 3,
\[
L_{h,k}(z) = \frac{1}{k\sqrt{p}}\sqrt{\frac{1}{3}Li_2(z^{3k})-Li_2(z^k)}.
\]
The zero attractor can be seen in Figure (\ref{fig2}) where we have identified the curves which appear to be portions of the
level sets $f_1=f_3$ and $f_2=f_3.$  

For $p=5,$ we obtain the same kind of phenomenon except things cannot be reduced as easily. 
As before, the phase functions are
\[
L_{h,5k}(z) = L_k(z) = \frac{1}{k\sqrt{p}}\sqrt{2Li_2(z^k)}
\]
and for $k$ not divisible by 5,
\[
L_{h,k}(z) = \frac{1}{k\sqrt{p}}\sqrt{\frac{1}{3}Li_2(e_5(1)z^{k})+Li_2(e_5(-1)z^k)}.
\]
Figure (\ref{fig1})  shows with a little bit stronger evidence that the unit disk is made up of three phases $R(1),$  $R(2),$ and $R(3).$

This framework reaches beyond the narrow scope we discussed here.   After all, we first worked out
the asymptotics of polynomials
 for plane partitions 
 whose coefficients are indexed by the trace.
 The corresponding phase functions involved
  a cube root rather than a square root \cite{MR3216005}.  Other more interesting applications would be the polynomial analogue of Wright's partitions into powers 
($a_m=\chi_{m^k}(m)$ for $k\in \mathbb{N}$ fixed) and the polynomial analogue of Cayley double partitions \cite{kaneiwa}
($a_m=p(n)$ where $p(n)$ counts the number of partitions of $n$).

The topological behavior of all the examples studied is also quite intriguing. It seems that every zero attractor drawn so far is a connected.  
Does this generalize the behavior of the eigenvalues of the truncations of  infinite banded Toeplitz matrices \cite{ullman}?  Is it possible to classify zero attractors by their fundamental group properties or even perhaps identify each phase as the interior of a specific set of level curves using the  Jordan curve theorem?  These are all interesting questions which beg an answer for them.

\appendix
 \section{Analysis of the Root Dilogarithm} \label{section:phases}

 For convenience,  we repeat the definition of the root dilogarithm 
 \begin{equation} \label{eq:f_k}
f_{k}(z)=
\frac{1}{k}\Re\left[ \sqrt{{\mathrm{Li}}_{2}(z^{k})}\right] , \quad k \in {\mathbb Z}^+,
\end{equation}
where the real part of the square root is chosen as nonnegative.
By construction each $f_k(z)$ is harmonic on the sectors in the unit disk determined by the rays
$\arg z = \pi j /k$, $0 \leq j < k$, and symmetric about the real axis. Further $f_1(z)=0$ if and only if $z \in [-1,0]$.

From this we learn several facts about $f_k(z)$ which derive from $Li_2(z).$  
For handling the real component of the square root we note that for any $z \in {\mathbb C}$,
 $z \neq 0$, 
\begin{equation}\label{eq:real_part_sq}
\sqrt{z} = \pm \frac{1}{\sqrt{2}}
 \left[ \sqrt{ \Re(z) + \vert z \vert } + i \,  \textrm{sign}( \Im z) 
 \sqrt{ - \Re(z) + \vert z \vert} \right].
\end{equation}
There are two special values of the polylogarithm worthy of note 
$Li_{2}(1) = \zeta(2) = {\pi^2}/{6}$ and 
$Li_2(-1) = -\eta(2) = - {\pi^2}/{12}.$ 
The goal of this appendix is to prove the facts in Section \ref{section:fk}.  We begin with computing the behavior of $f_1(z)$ on circles.

\begin{lemma}\label{lemma:argdecrease}
For a fixed value of $r\in (0,1],$ the function 
$t \mapsto \arg Li_2(re^{it})$ is increasing on $[0,\pi ]$.
\end{lemma}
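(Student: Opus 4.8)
The plan is to differentiate $\arg\mathrm{Li}_2(re^{it})$ directly from the power series and reduce the claim to the nonnegativity of a single trigonometric sum, which I then identify as a sum of Fejér kernels with nonnegative weights. Write $\mathrm{Li}_2(re^{it}) = u(t)+iv(t)$ with $u(t)=\sum_{n\ge1}r^n\cos(nt)/n^2$ and $v(t)=\sum_{n\ge1}r^n\sin(nt)/n^2$. First I record that $\mathrm{Li}_2(re^{it})\ne0$ on the range in question: for $t\in(0,\pi)$ the representation $\mathrm{Li}_2(w)=-\int_0^1 s^{-1}\log(1-ws)\,ds$ gives $\Im\mathrm{Li}_2(re^{it})=-\int_0^1 s^{-1}\arg(1-re^{it}s)\,ds>0$, since each $1-re^{it}s$ lies in the open lower half-plane with positive real part; at $t=0,\pi$ the value is the nonzero real number $\mathrm{Li}_2(\pm r)$. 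Hence $t\mapsto\arg\mathrm{Li}_2(re^{it})$ is a well-defined continuous function, differentiable on $(0,\pi]$ (and, for $0<r<1$, also at $t=0$), with $\frac{d}{dt}\arg\mathrm{Li}_2(re^{it})=(uv'-vu')/(u^2+v^2)$, so it suffices to prove $uv'-vu'\ge0$ on $[0,\pi]$.

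Differentiating termwise (valid for $0<r<1$; the case $r=1$ will be recovered by letting $r\to1^-$) gives $u'=-\sum r^n\sin(nt)/n$ and $v'=\sum r^n\cos(nt)/n$, so that $uv'-vu'=\sum_{n,m\ge1}\frac{r^{n+m}}{n^2m}\cos\bigl((n-m)t\bigr)$. Grouping along the diagonals $|n-m|=d$ and fusing the $+d$ and $-d$ diagonals rewrites this as $\tfrac12 b_0+\sum_{d\ge1}b_d\cos(dt)$, where $b_d=\sum_{n\ge1}\phi_n(d)$ with $\phi_n(d)=\dfrac{(2n+d)\,r^{2n+d}}{n^2(n+d)^2}$; note $\phi_n(0)=2r^{2n}/n^3$, so the constant coefficient $\tfrac12 b_0=\sum_n r^{2n}/n^3$ is exactly the $d=0$ diagonal and the single formula $b_d=\sum_n\phi_n(d)$ is consistent at $d=0$ as well.

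The structural heart of the proof is that $(b_d)_{d\ge0}$ is a positive, nonincreasing, convex sequence with $b_d\to0$. Indeed, regarding $d$ as a continuous variable, $\phi_n(d)=\frac{r^{2n}}{n^2}\cdot r^{d}\cdot\bigl(\frac{1}{n+d}+\frac{n}{(n+d)^2}\bigr)$ is a product of positive, nonincreasing, convex functions of $d\in[0,\infty)$, and such a product is again positive, nonincreasing and convex (from $(fg)''=f''g+2f'g'+fg''$ with $f'',g''\ge0$, $f,g\ge0$, $f'g'\ge0$); summing over $n$ preserves these properties, and $b_d\to0$ by the crude bound $b_d\le\sum_{n\le d}\frac{3}{n^2d}+\sum_{n>d}\frac{3}{n^3}\to0$. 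Given this, the classical nonnegativity theorem for convex cosine coefficients — equivalently, two summations by parts giving $\tfrac12 b_0+\sum_{d\ge1}b_d\cos(dt)=\sum_{d\ge0}(d+1)(b_d-2b_{d+1}+b_{d+2})F_d(t)$, where $F_d\ge0$ is the Fejér kernel — yields $uv'-vu'\ge0$ on $[0,\pi]$; the $d=0$ term alone equals $\tfrac12(b_0-2b_1+b_2)F_0(t)>0$ on $(0,\pi)$, so the inequality is strict there and $t\mapsto\arg\mathrm{Li}_2(re^{it})$ is in fact strictly increasing. The boundary case $r=1$ follows because $\arg\mathrm{Li}_2(e^{it})=\lim_{r\to1^-}\arg\mathrm{Li}_2(re^{it})$ pointwise and a pointwise limit of nondecreasing functions is nondecreasing.

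I expect the main obstacle to be the bookkeeping behind the convexity claim: arranging the diagonal resummation so that one formula for $b_d$ handles $d=0$ correctly, and pushing ``a product of positive, nonincreasing, convex functions is convex'' through the infinite sum over $n$ (together with enough decay to legitimize the Fejér representation for $t\in(0,\pi]$ when $r=1$). The other ingredients — termwise differentiation, the estimate $b_d\to0$, and the passage $r\to1^-$ — are routine. Finally, it is worth noting that the statement is precisely the assertion that $\mathrm{Li}_2$ is starlike on $\mathbb D$: since $z\,\mathrm{Li}_2'(z)=-\log(1-z)$, the inequality $uv'-vu'>0$ at $z=re^{it}$ is $\Re\bigl(z\,\mathrm{Li}_2'(z)/\mathrm{Li}_2(z)\bigr)=\Re\bigl(-\log(1-z)/\mathrm{Li}_2(z)\bigr)>0$, the standard starlikeness criterion.
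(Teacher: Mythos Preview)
Your argument is correct, and it is genuinely different from the paper's. The paper's proof is a one-line citation: it invokes the theorem of Lewis that every polylogarithm $\mathrm{Li}_s$ with $s>0$ is univalent and starlike on $\mathbb D$, and starlikeness is by definition the condition $\partial_t\arg g(re^{it})>0$. Your proof, by contrast, is a self-contained verification of starlikeness for $\mathrm{Li}_2$ specifically: you expand $uv'-vu'$ as a cosine series $\tfrac12 b_0+\sum_{d\ge1}b_d\cos(dt)$, check that the coefficient sequence $(b_d)$ is positive, decreasing and convex (via the pleasant factorisation $\phi_n(d)=\tfrac{r^{2n}}{n^2}\cdot r^d\cdot\bigl(\tfrac{1}{n+d}+\tfrac{n}{(n+d)^2}\bigr)$ and the observation that products of positive decreasing convex functions are convex), and then invoke the classical Fej\'er-kernel identity to conclude nonnegativity. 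You correctly handle the boundary $r=1$ by a pointwise limit of nondecreasing functions and you note at the end that what you have proved is exactly the starlikeness criterion $\Re\bigl(z\,\mathrm{Li}_2'(z)/\mathrm{Li}_2(z)\bigr)>0$.

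The trade-off is clear: the paper's route is short but imports a nontrivial external result (Lewis's proof covers all $s>0$ and is not elementary), while your route is longer but entirely elementary and tailored to $s=2$. Your approach also yields the strict inequality on $(0,\pi)$ directly from the $d=0$ Fej\'er term, which the paper gets only implicitly through the cited starlikeness.
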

\begin{proof}
Let $g(z)$ be a univalent function 
on the unit disk 
 normalized by $g(0)=0$ and$g'(0)=1$. Then 
 $g(z)$ is star-like  if
 its derivative satisfies
\[
\frac{ \partial \arg g(r e^{i t})}{\partial t} >0,
\]
for $0<r < 1$ \cite[p. 42]{MR708494}. By a result of  Lewis \cite{MR697137},  
any polylogarithm $Li_s(z)$, $s>0$, is univalent and star-like so the result follows
\end{proof}

By equation (\ref{eq:real_part_sq}),
$f_1(z)=0$ if and only if $\Im Li_2(z)=0$ and $\Re Li_2(z) \leq 0$. Since $Li_2(z)$ is negative on $[-1,0]$,
we find $f_1(z)=0$ if and only if $z \in [-1,0]$.

\begin{lemma}\label{lemma:moddecrease}
For a fixed value of $r\in (0,1],$ the function 
$t \mapsto  \vert  Li_2(re^{it})  \vert$ is decreasing on $[0,\pi ]$.
\end{lemma}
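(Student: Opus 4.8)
The plan is to reduce the monotonicity of $r\mapsto |Li_2(re^{it})|$ — wait, the variable is $t$ — the monotonicity of $t\mapsto |Li_2(re^{it})|$ to a statement about the logarithmic derivative. Write $g(t)=\ln|Li_2(re^{it})|=\Re\ln Li_2(re^{it})$, which is well-defined on $[0,\pi]$ since $Li_2$ does not vanish on the punctured disk (its only zero is at $z=0$). Then it suffices to show $g'(t)\le 0$, i.e. that
\[
\frac{d}{dt}\ln Li_2(re^{it}) = \frac{ire^{it}\,Li_2'(re^{it})}{Li_2(re^{it})}
\]
has nonpositive real part. Since $Li_2'(z)=-\ln(1-z)/z$, we have $zLi_2'(z)=-\ln(1-z)$, so the quantity whose real part we must control is $\Re\bigl[-i\,\ln(1-re^{it})/Li_2(re^{it})\bigr]$, and we want this $\le 0$ for $t\in[0,\pi]$.

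The key idea will be to exploit starlikeness again, as in Lemma \ref{lemma:argdecrease}. The function $z\mapsto -\ln(1-z)$ is itself univalent and starlike on the unit disk (it is $Li_1(z)$, covered by Lewis's theorem \cite{MR697137}), so by the same criterion $\Re\bigl[ize^{it}\cdot\frac{d}{dz}(-\ln(1-z))/(-\ln(1-re^{it}))\bigr]\ge 0$; that is, $\arg(-\ln(1-re^{it}))$ is increasing in $t$. Combining with Lemma \ref{lemma:argdecrease}, which says $\arg Li_2(re^{it})$ is increasing, I would instead argue directly. First note that on $[0,\pi]$ both $Li_2(re^{it})$ and $-\ln(1-re^{it})$ lie in the upper half-plane (for $0<t<\pi$), since $Li_2$ and $Li_1$ map the upper half of the disk to the upper half-plane — this follows from the series having positive coefficients together with the Schwarz reflection symmetry and the starlikeness/univalence (the boundary of the image of the upper half-disk hits the real axis only at the images of $\pm r$). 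Consequently $\arg Li_2(re^{it})\in[0,\pi]$ and $\arg(-\ln(1-re^{it}))\in[0,\pi]$ throughout. Then
\[
\Re\!\left[\frac{-i\,(-\ln(1-re^{it}))}{Li_2(re^{it})}\right]
= \frac{|{-\ln(1-re^{it})}|}{|Li_2(re^{it})|}\,\Re\!\left[-i\,e^{i(\arg(-\ln(1-re^{it}))-\arg Li_2(re^{it}))}\right]
= \frac{|{-\ln(1-re^{it})}|}{|Li_2(re^{it})|}\,\sin\!\bigl(\theta_1(t)-\theta_2(t)\bigr),
\]
where $\theta_1(t)=\arg(-\ln(1-re^{it}))$ and $\theta_2(t)=\arg Li_2(re^{it})$. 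Thus the sign of $g'(t)$ equals the sign of $\sin(\theta_1(t)-\theta_2(t))$, and it remains to show $\theta_1(t)-\theta_2(t)\in[-\pi,0]$ for $t\in[0,\pi]$, equivalently $\theta_2(t)\ge\theta_1(t)$.

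The heart of the matter — and the step I expect to be the main obstacle — is therefore the comparison of arguments $\arg Li_2(re^{it})\ge\arg(-\ln(1-re^{it}))$ on $[0,\pi]$. Intuitively $Li_2$ ``winds more'' than $Li_1$ because its coefficients $1/n^2$ still decay slowly but the function is built by integrating $-\ln(1-z)/z$, which averages the argument toward larger values; I would try to prove it by writing $Li_2(z)/(-\ln(1-z)) = \int_0^1 \frac{-\ln(1-sz)}{sz}\cdot\frac{z}{-\ln(1-z)}\,\frac{ds}{?}$ — more cleanly, $Li_2(z)=\int_0^1 \frac{-\ln(1-sz)}{s}\,ds$ and $-\ln(1-z)=\int_0^1\frac{z}{1-sz}\,ds$, and show that the first integrand has argument dominating that of $-\ln(1-z)$ pointwise in $s$, then invoke the elementary fact that an integral (with positive real weight) of vectors all lying in a half-plane bounded by a line through the origin has argument between the extreme arguments of the integrands. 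For this pointwise comparison one reduces to $\arg(-\ln(1-w))\ge\arg\bigl(w/(1-w)\bigr)$ — wait, one needs to track the $1/s$ weights — so more carefully one shows $\arg\bigl((-\ln(1-sz))/s\bigr)=\arg(-\ln(1-sz))$ is a decreasing function of $s\in(0,1]$ for fixed $z$ with $\Im z>0$ (again starlikeness of $-\ln(1-\cdot)$ along the radius gives $\arg(-\ln(1-sz))$ increasing in $|sz|$, i.e. increasing in $s$ — so actually $\arg$ is largest at $s=1$), whence $\arg Li_2(z)$, being the argument of an average of vectors whose arguments range over $[\arg(-\ln(1-0^+\cdot z)),\arg(-\ln(1-z))] = [0,\arg(-\ln(1-z))]$... this would give $\arg Li_2(z)\le\arg(-\ln(1-z))$, the opposite inequality. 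This sign discrepancy tells me the clean averaging argument needs to be set up with the correct monotone direction, and sorting that out is exactly the delicate point; alternatively one falls back on a direct power-series/positivity computation of $\Im\bigl(Li_2(z)\,\overline{(-\ln(1-z))}\bigr)$ having a definite sign, or simply invokes the monotonicity results for $f_1$ already established (Propositions \ref{f1calculus}, \ref{prop:decreasing}) if the present lemma is only needed as an intermediate step toward them. I would present the argument via the half-plane/averaging lemma once the monotone direction is pinned down by a small explicit check near $t=0$.
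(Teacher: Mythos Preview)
Your approach is different from the paper's, and as written it is incomplete --- but the incompleteness stems from a sign slip rather than from a fundamental obstacle. In passing from ``the quantity whose real part we must control is $\Re\bigl[-i\ln(1-re^{it})/Li_2(re^{it})\bigr]$'' to the displayed computation you silently replaced $-i\ln(1-z)$ by $-i\bigl(-\ln(1-z)\bigr)$. The correct expression is
\[
g'(t)=\Re\frac{i\,Li_1(z)}{Li_2(z)}=-\frac{|Li_1(z)|}{|Li_2(z)|}\sin\bigl(\theta_1(t)-\theta_2(t)\bigr),
\]
so what you actually need is $\theta_1(t)\ge\theta_2(t)$, i.e.\ $\arg Li_1(z)\ge\arg Li_2(z)$ on the upper half disk --- precisely the inequality your averaging argument delivered and which you then dismissed as ``the opposite.'' So the apparent contradiction dissolves. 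That said, the averaging step still has a gap: you assert that $s\mapsto\arg Li_1(sz)$ is increasing along the ray, but starlikeness only controls the \emph{angular} derivative of the argument, not the radial one, and you would need a separate justification (e.g.\ showing $\Im\bigl[w/\bigl((1-w)(-\ln(1-w))\bigr)\bigr]>0$ on the upper half disk) before the cone argument goes through.

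The paper sidesteps all of this with a quite different and much shorter device: Fej\'er's criterion, which says that for a power series $h(z)=\sum c_n z^n$ with positive coefficients, $t\mapsto|h(re^{it})|$ is decreasing on $[0,\pi]$ whenever the fourth forward differences $\Delta^4(r^n c_n)$ are all nonnegative. For $Li_2$ this amounts to $\Delta^4(r^n/n^2)\ge 0$, and the paper verifies this in one stroke by observing that $r^n/(n+1)^2$ is a Hausdorff moment sequence (namely $\int_0^1 x^n\bigl(-\tfrac{1}{r}\ln(x/r)\bigr)\,dx$), whence it is completely monotone and in particular the fourth differences are nonnegative. Your route, once the sign is fixed and the radial monotonicity is supplied, would give an alternative proof tying the modulus monotonicity to the comparison $\arg Li_1\ge\arg Li_2$, which is a pleasant statement in its own right; the paper's route is shorter and requires no such comparison.
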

\begin{proof}
Let $h(z) = \sum_{n=1}^\infty c_n z^n$  be a power series  convergent in the unit disk
with $c_n >0$.
For fixed $0<r \leq 1$, the theorem of Fej\'er described in  \cite[p. 513]{MR1626181}
states that $ t \mapsto  \vert h( r e^{it})\vert$ is decreasing provided $\Delta^4( r^n c_n) \geq 0$ for all $n$.
For $Li_2(z)$,  Fej\'er's conditon becomes $\Delta^4( r^n /n^2) \geq 0$ for all $n$.
These inequalities hold because $a_n=r^n/(n+1)^2$ is a moment sequence for the probability
measure on $[0,1]$ with density function $- (1/r) \ln( x/r)$; in fact,
\[
\frac{r^n} { (n+1)^2} = -\int_0^1 \frac{x^n}{r} \log (x/r) \, dx
\]
It follows that $(-1)^j \Delta^j ( a_n ) > 0$; in particular, the required fourth-order difference is also nonnegative.
\end{proof}

\noindent 
These two lemmas combine to prove our first objective.

\begin{proposition}
[Proposition \ref{prop:decreasing} in main text]
For a fixed value of $r\in (0,1],$ the function 
$t \mapsto f_{1}(re^{it})$ is decreasing on $[0,\pi ]$.
\end{proposition}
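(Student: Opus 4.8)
The plan is to combine the two lemmas just established, Lemma \ref{lemma:argdecrease} and Lemma \ref{lemma:moddecrease}, via the explicit formula (\ref{eq:real_part_sq}) for the real part of a square root. Writing $w = w(t) = \mathrm{Li}_2(re^{it})$, the definition of $f_1$ and the sign convention on the square root give
\[
f_1(re^{it}) = \frac{1}{\sqrt{2}}\sqrt{\,\Re w(t) + |w(t)|\,} = \frac{1}{\sqrt 2}\sqrt{|w(t)|}\,\sqrt{1 + \cos\bigl(\arg w(t)\bigr)}.
\]
So it suffices to show that each of the two factors $|w(t)|$ and $1+\cos(\arg w(t))$ is nonincreasing in $t$ on $[0,\pi]$, since a product of two nonnegative nonincreasing functions is nonincreasing and $x\mapsto\sqrt{x}$ is increasing.

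First I would record that $w(t)\neq 0$ for $t\in[0,\pi]$: by the remark following Lemma \ref{lemma:argdecrease}, $\mathrm{Li}_2(z)$ vanishes on the closed unit disk only on $[-1,0]$, i.e. at arguments $t=\pi$ with $|w(\pi)|$ possibly zero only when $r=1$; at $r=1$, $w(\pi)=\mathrm{Li}_2(-1)=-\pi^2/12\neq 0$, so in fact $w(t)\neq0$ throughout and $\arg w(t)$, $|w(t)|$ are well-defined and smooth. The factor $|w(t)|$ is nonincreasing on $[0,\pi]$ directly by Lemma \ref{lemma:moddecrease}. For the second factor, Lemma \ref{lemma:argdecrease} says $t\mapsto\arg w(t)$ is increasing; I need to know that the values $\arg w(t)$ stay in a range where $\cos$ is decreasing, namely $[0,\pi]$. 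At $t=0$, $w(0)=\mathrm{Li}_2(r)>0$ so $\arg w(0)=0$; as $t$ runs to $\pi$, $\arg w$ increases, and one checks $\arg w(\pi)\le\pi$ because $w(\pi)=\mathrm{Li}_2(-r^k\text{-type value})$ lies in the closed upper half-plane with nonpositive real part (indeed $\mathrm{Li}_2(-r)<0$ is real, so $\arg w(\pi)=\pi$; and for $t<\pi$ the imaginary part of $\mathrm{Li}_2(re^{it})=\sum r^n\sin(nt)/n^2$ is positive, keeping $\arg w(t)\in(0,\pi)$). Hence $\arg w(t)\in[0,\pi]$ for all $t\in[0,\pi]$, on which interval $\cos$ is decreasing, so $t\mapsto\cos(\arg w(t))$ is nonincreasing, and therefore so is $1+\cos(\arg w(t))$.

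Combining, $f_1(re^{it})^2$ is a product of nonnegative nonincreasing functions of $t$, hence nonincreasing, and taking square roots gives that $t\mapsto f_1(re^{it})$ is nonincreasing on $[0,\pi]$; strict monotonicity on $(0,\pi)$ follows since $|w(t)|$ is strictly decreasing there. The main obstacle is the bookkeeping in the previous paragraph: verifying that $\arg\mathrm{Li}_2(re^{it})$ never escapes $[0,\pi]$, so that the increasing-argument conclusion of Lemma \ref{lemma:argdecrease} actually translates into a decreasing-cosine statement. This is where one must use the sign of $\Im\mathrm{Li}_2(re^{it})=\sum_{n\ge1} r^n\sin(nt)/n^2$, which is positive on $(0,\pi)$ because $\mathrm{Li}_2$ maps the open upper half-disk into the open upper half-plane (a consequence of the same Lewis univalence/star-likeness input, or of the positivity of the series). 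Once that range control is in hand, the rest is the elementary algebra of (\ref{eq:real_part_sq}).
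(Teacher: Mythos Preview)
Your proof is correct and follows essentially the same route as the paper: both arguments write $f_1(re^{it})$ as a product of $|Li_2(re^{it})|^{1/2}$ and a decreasing function of $\arg Li_2(re^{it})$ (the paper uses the polar form $\cos(\tfrac12\arg w)$, you use the equivalent $\tfrac{1}{\sqrt2}\sqrt{1+\cos(\arg w)}$ via the half-angle identity), then invoke Lemmas~\ref{lemma:argdecrease} and~\ref{lemma:moddecrease} together with the range control $\arg w(t)\in[0,\pi]$. One small slip: you write that $Li_2(z)$ vanishes on $[-1,0]$, but in fact $Li_2$ has its only zero at $z=0$ (by univalence); it is $f_1$ that vanishes on $[-1,0]$. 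This does not affect your argument, since $r>0$ already gives $w(t)\neq 0$.
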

\begin{proof}
We begin by writing $f_1(r e^{i  t})$ in polar form
\begin{align*}
f_1( r e^{ i t}) 
&= \vert Li_2( r e^{i t}) \vert^{1/2}  \, \cos ( \tfrac{1}{2} \arg Li_2( r e^{i t}) )  .
\end{align*}
Lemmas \ref{lemma:argdecrease} and
\ref{lemma:moddecrease}   show that both factors in this factorization are decreasing functions of $t$.
In particular,  since $ \arg Li_2( r e^{i t}) $ 
is an increasing function of $t$ which is $0$ on the positive real axis and $\pi$ on the negative axis, the function
 $ \cos ( \tfrac{1}{2} \arg Li_2( r e^{i t}) ) $ is decreasing on $[0,\pi]$.
\end{proof}

\noindent 
An immediate application of  this proposition is to obtain  the elementary  bounds we use throughout the appendix.

\begin{lemma}\label{lemma:fkrangebounds}  
For $\vert z \vert \leq 1$, we have the bounds
\\
\begin{enumerate}
\item $\displaystyle \frac{\pi^2}{12} \vert z \vert \leq \vert Li_2(z)\vert \leq \frac{\pi^2}{6} \vert z \vert $, \quad 
\item   $\displaystyle 0\le f_k(z) \leq \frac{ \pi}{ k \sqrt{6}} \vert z \vert^{k/2}$. \label{eq:bound1}
\end{enumerate}
\end{lemma}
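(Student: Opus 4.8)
\textbf{Proof plan for Lemma \ref{lemma:fkrangebounds}.}
The plan is to treat the two statements separately, deriving (2) as a consequence of (1) together with the definition of $f_k$.

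First I would prove the modulus bound (1). The upper bound is immediate from the series: for $|z|\le 1$,
\[
|Li_2(z)| = \left| \sum_{n=1}^\infty \frac{z^n}{n^2}\right| \le \sum_{n=1}^\infty \frac{|z|^n}{n^2} \le |z| \sum_{n=1}^\infty \frac{1}{n^2} = \frac{\pi^2}{6}|z|,
\]
where the factor $|z|$ is pulled out using $|z|^n \le |z|$ for $n\ge 1$. For the lower bound I would use Proposition \ref{prop:decreasing}-adjacent reasoning, specifically Lemma \ref{lemma:moddecrease}: for fixed $r = |z| \in (0,1]$, the map $t\mapsto |Li_2(re^{it})|$ is decreasing on $[0,\pi]$, so its minimum over the circle $|z|=r$ is attained at $t=\pi$, giving $|Li_2(z)| \ge |Li_2(-r)| = \eta(2)\, \text{(as a function of }r\text{)}$. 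Here I would invoke that $|Li_2(-r)| = -Li_2(-r) = \sum_{n=1}^\infty (-1)^{n+1} r^n/n^2 \ge r/1^2 - r/4 = (3/4) r$ by alternating-series bracketing, and then observe $3/4 \ge \pi^2/12$ is false — so instead I would bound $|Li_2(-r)| \ge r \cdot (\pi^2/12)$ directly by noting $-Li_2(-r)/r = \sum_{n\ge 1}(-1)^{n+1} r^{n-1}/n^2$ is an increasing function of $r$ on $[0,1]$ (each partial-sum tail argument, or termwise since the odd-indexed positive terms dominate), so its minimum on $(0,1]$ is the limit as $r\to 0^+$, which is $1 = \pi^2/12 \cdot (12/\pi^2)$; more carefully, the cleanest route is: $-Li_2(-r)/r \ge -Li_2(-1)/1 \cdot (\text{something})$ — actually the honest monotonicity gives the value at $r=1$, namely $\eta(2) = \pi^2/12$, as the \emph{maximum}, and the infimum is $1$ at $r=0$; since $1 \ge \pi^2/12$, the bound $|Li_2(z)| \ge (\pi^2/12)|z|$ holds. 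This circle-reduction step is the main obstacle: one must be careful which endpoint of $[0,\pi]$ and which endpoint of $(0,1]$ delivers the extremum, and confirm the resulting numerical inequality $\inf_{r\in(0,1]} (-Li_2(-r)/r) \ge \pi^2/12$, which follows because $-Li_2(-r)/r$ decreases to $1$ as $r \to 0^+$ only if the series has that monotonicity — so I would instead simply note $-Li_2(-r)/r \ge \pi^2/12$ is equivalent to $-Li_2(-r) \ge (\pi^2/12) r$, and since $g(r) := -Li_2(-r) - (\pi^2/12)r$ satisfies $g(0)=0$, $g(1)=0$, and $g$ is concave on $[0,1]$ (as $g''(r) = -\tfrac{d^2}{dr^2}Li_2(-r) = \tfrac{1}{r(1+r)} - 0 > 0$, wait that is convex) — the sign of $g''$ must be checked and the conclusion drawn from convexity/concavity plus the two boundary zeros.

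Second, statement (2): the nonnegativity $f_k(z) \ge 0$ is built into Definition \ref{def:root_dilog}, where the real part of the square root is chosen nonnegative. For the upper bound, I would write $f_k(z) = \tfrac1k \Re\sqrt{Li_2(z^k)} \le \tfrac1k |\sqrt{Li_2(z^k)}| = \tfrac1k |Li_2(z^k)|^{1/2}$, and then apply the already-proved upper half of (1) to $z^k$ (noting $|z^k| = |z|^k \le 1$):
\[
f_k(z) \le \frac1k \left( \frac{\pi^2}{6} |z|^k \right)^{1/2} = \frac{\pi}{k\sqrt{6}} |z|^{k/2}.
\]
That completes the argument; all that is genuinely at stake is the lower bound in (1), for which the clean statement to lean on is Lemma \ref{lemma:moddecrease} (reduction to the negative real axis) followed by an elementary one-variable estimate on $-Li_2(-r)$.
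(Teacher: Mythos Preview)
Your treatment of part (2) and the upper half of (1) is fine and matches the paper in spirit. The genuine gap is the lower bound in (1): you correctly use Lemma \ref{lemma:moddecrease} to reduce, for each fixed $r$, to the point $z=-r$, but then you never actually establish the one-variable inequality $-Li_2(-r)\ge (\pi^2/12)r$ on $(0,1]$. Your attempts wander: the alternating-series bracketing gives constants you immediately see are wrong, the monotonicity of $-Li_2(-r)/r$ goes the opposite way from what you hope, and your computation of $g''$ is incorrect. For the record, your concavity idea \emph{does} work once carried out: with $g(r)=-Li_2(-r)-(\pi^2/12)r$ one has $g'(r)=\ln(1+r)/r-\pi^2/12$ and
\[
g''(r)=\frac{1}{r^2}\left(\frac{r}{1+r}-\ln(1+r)\right)<0\quad\text{for }r>0,
\]
since $\ln(1+r)>r/(1+r)$; together with $g(0)=g(1)=0$ this forces $g\ge 0$ on $[0,1]$.

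The paper avoids this radial detour entirely. It applies the maximum and minimum modulus principles to the function $Li_2(z)/z$, which is analytic and nonvanishing on the open unit disk because $Li_2$ is univalent there with a simple zero at $0$. Both extrema of $|Li_2(z)/z|$ therefore live on the boundary circle $|z|=1$, and Lemma \ref{lemma:moddecrease} then locates the maximum at $z=1$ (value $\pi^2/6$) and the minimum at $z=-1$ (value $\pi^2/12$). This handles the upper and lower bounds simultaneously in one stroke, with no separate one-variable estimate needed; the key idea you are missing is to package the inequality as a statement about the quotient $Li_2(z)/z$ so that the minimum modulus principle applies.
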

\begin{proof}
(1)
Apply the maximum and minimum modulus principle to $Li_2( z)/z$. Since $Li_2(z)$ is univalent
in the unit disk,
it has a unique zero  at $z=0$
which is simple. The maximum occurs at $z=1$ and minimum at $z=-1$ which is observed using Proposition \ref{lemma:moddecrease}.
\\
(2) This follows immediately from part (1) and the fact that $Li_2(1) = \pi^2/6$.
\end{proof}

\subsection{Behavior of the root dilogarithm on the imaginary axis}

As a heurstic, studying $f_k(z)$ on radial lines and circles is how one proves facts about phases.  Proposition \ref{prop:decreasing} is conclusive on $f_k(z)$'s behavior on the circles $\vert z \vert=r  \leq 1$
 but the behavior on radial lines is much harder.  
We can scrape by with knowing how $f_1(z)$ behaves on the ``main" radial lines; the real and imaginary axes.  
On the negative axis $f_1(z)=0,$ and on the positive axis, $f_1(z)= \sqrt{Li_2(z)}$ which is routine.  
So we focus on the imaginary axis and prove

\begin{proposition}\,
[Proposition \ref{f1calculus} in the main text]
The function $r\to f_1(ir)$ is a positive, increasing, concave function of $r\in (0,1).$
\end{proposition}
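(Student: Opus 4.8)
The plan is to work directly with the formula for $f_1$ on the imaginary axis. Setting $z = ir$ with $r \in (0,1)$, we have $z^1 = ir$, and by the series definition
\[
\mathrm{Li}_2(ir) = \sum_{n=1}^\infty \frac{(ir)^n}{n^2} = \left( - \sum_{m=1}^\infty \frac{(-1)^m r^{2m}}{(2m)^2} \right) + i \left( \sum_{m=0}^\infty \frac{(-1)^m r^{2m+1}}{(2m+1)^2} \right),
\]
so $\Re\,\mathrm{Li}_2(ir) = \tfrac14 \eta_2(r^2) > 0$ and $\Im\,\mathrm{Li}_2(ir) = \chi_2(r) > 0$, where I write $\eta_2$ and $\chi_2$ for the obvious alternating series (the Legendre chi function). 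Using formula (\ref{eq:real_part_sq}) with $\Im z > 0$, we get
\[
f_1(ir) = \frac{1}{\sqrt{2}} \sqrt{ \Re\,\mathrm{Li}_2(ir) + |\mathrm{Li}_2(ir)| }.
\]
Positivity is then immediate since $\Re\,\mathrm{Li}_2(ir) > 0$ for $r \in (0,1)$. For monotonicity, I would first note that $\Re\,\mathrm{Li}_2(ir)$, $\Im\,\mathrm{Li}_2(ir)$, and hence $|\mathrm{Li}_2(ir)|$ are all increasing in $r$ (each is a power series in $r$ with a leading positive term; more carefully, $|\mathrm{Li}_2(ir)|^2 = (\Re)^2 + (\Im)^2$ is a sum of products of increasing positive functions), so $f_1(ir)$ is the square root of an increasing positive function, hence increasing. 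That disposes of positivity and monotonicity cheaply.

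Concavity is the real work. The cleanest route is to change variables: write $w = \mathrm{Li}_2(ir)$ and express things through the real quantity $u = r^2$, or better, differentiate $f_1(ir)$ directly in $r$. Let $g(r) = f_1(ir)^2 = \tfrac12(\Re w + |w|)$. Then $f_1(ir) = \sqrt{g(r)}$, and
\[
\frac{d^2}{dr^2}\sqrt{g} = \frac{2 g\, g'' - (g')^2}{4 g^{3/2}},
\]
so concavity of $f_1(ir)$ is equivalent to $2 g g'' \le (g')^2$ on $(0,1)$. Now $\mathrm{Li}_2'(z) = -\log(1-z)/z$, so along $z = ir$ one has $\frac{d}{dr}\mathrm{Li}_2(ir) = i \cdot \frac{-\log(1-ir)}{ir} = \frac{-\log(1-ir)}{r}$. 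This gives reasonably explicit expressions for $w'$ and hence for $(\Re w)'$ and $|w|' = \Re(\bar w w')/|w|$. The plan is to reduce $2gg'' \le (g')^2$ to an inequality among the elementary series $\eta_2(r^2)$, $\chi_2(r)$, and the components of $-\log(1-ir) = -\tfrac12\log(1+r^2) + i\arctan r$, and then verify it.

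I expect the concavity inequality to be the main obstacle, since after the substitutions it will not collapse to anything as clean as a single Fej\'er-type or moment-sequence condition the way Proposition \ref{prop:decreasing} did. Two fallback strategies I would keep in reserve: (i) integral representation — write $f_1(ir)^2 = \tfrac12(\Re w + |w|)$ and seek a representation of the inner function as $\int_0^1 \phi(x, r)\, d\mu(x)$ with $\phi$ concave in $r$ for each $x$, exploiting the known integral formula $\mathrm{Li}_2(z) = -\int_0^1 \frac{\log(1-zt)}{t}\,dt$; concavity of a nonnegative square root is preserved under suitable averaging only in special cases, so this needs care. (ii) A direct estimate: bound $|w|$ above and below by $\Re w$ plus controlled error (on the imaginary axis $\Im w = \chi_2(r)$ is comparable to $r$ while $\Re w = \tfrac14\eta_2(r^2)$ is order $r^2$, so $\Im w$ dominates near $0$), then handle the regime near $r=0$ by the leading asymptotics $f_1(ir) \sim \sqrt{\chi_2(r)/2} \sim \sqrt{r/2}$ (which is concave) and the regime near $r = 1$ by monotonicity of the second derivative, patching via a single sign change of $g''$. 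Whichever route succeeds, the endpoint behavior $f_1(0)=0$, $f_1(i) = \tfrac{1}{\sqrt2}\sqrt{\Re\,\mathrm{Li}_2(i) + |\mathrm{Li}_2(i)|}$ with $\mathrm{Li}_2(i) = -\tfrac{\pi^2}{48} + iG$ ($G$ = Catalan's constant) should be recorded, since concavity plus these values is exactly what the level-set arguments in Proposition \ref{positivitybeta} and Theorem \ref{Oddpartsphases} consume.
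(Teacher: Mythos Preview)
There is a sign error at the very first step which propagates and invalidates both the positivity and the monotonicity arguments. Since $i^{2m}=(-1)^m$, the even-index terms of $\mathrm{Li}_2(ir)$ give
\[
\Re\,\mathrm{Li}_2(ir)=\sum_{m\ge 1}\frac{(-1)^m r^{2m}}{(2m)^2}=\tfrac14\,\mathrm{Li}_2(-r^2)\le 0,
\]
not $+\tfrac14\eta_2(r^2)>0$; this is exactly equation (\ref{eq:imaginary_axis}) in the paper. Positivity of $f_1(ir)$ still holds, but only because $\Im\,\mathrm{Li}_2(ir)>0$ forces $|w|>|\Re w|$; your stated reason is wrong. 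More seriously, your monotonicity argument collapses: $\Re\,\mathrm{Li}_2(ir)$ is \emph{decreasing} on $(0,1)$, so $g(r)=\tfrac12(\Re w+|w|)$ is a difference of increasing quantities and is not obviously monotone. One can check that even after rewriting $g=\tfrac12 b^2/(\,|w|-a\,)$ with $a=\Re w<0$, $b=\Im w>0$, both numerator and denominator increase, so no cheap argument survives.

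The paper's route avoids this by working in polar form. Because $\Re w\le 0$ and $\Im w\ge 0$, the argument $\theta(r)=\arg\mathrm{Li}_2(ir)$ lies in the second quadrant; a short computation (Lemma following equation (\ref{eq:imaginary_axis})) shows $\theta(r)$ is increasing with $\theta(r)/2\in[\pi/4,\pi/3]$. One then differentiates $\sqrt{\mathrm{Li}_2(ir)}$ directly using $\mathrm{Li}_2'(z)=-\log(1-z)/z$ and takes real parts against $e^{-i\theta(r)/2}$: monotonicity reduces to
\[
\sin\!\big(\tfrac{\theta}{2}\big)\arctan r-\tfrac12\cos\!\big(\tfrac{\theta}{2}\big)\ln(1+r^2)>0,
\]
which follows from the range of $\theta/2$ and the elementary inequality $\arctan r>\tfrac12\ln(1+r^2)$. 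Concavity is handled the same way: the second derivative splits into two pieces whose real parts are each shown negative using the signs of $\cos(3\theta/2)$ and $\sin(3\theta/2)$ on $[\pi/2,2\pi/3]$ (Lemma \ref{lemma:f_1(ir)_concave}). Your plan to reduce concavity to $2gg''\le (g')^2$ is not wrong in principle, but the sign structure you assumed is inverted, and the polar decomposition is what makes the resulting inequalities tractable.
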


We  break up this theorem into several lemmas; for convenience, we let
\[
 r\mapsto \theta= \arg( Li_2(ir )) .
\]

We will begin with an improvement  to the bound of Lemma \ref{lemma:fkrangebounds}
 when $z$  lies on the imaginary axis.   
Recall that the real and imaginary parts of $Li_2( i r)$ have the explicit forms 
\cite[Chapter 5]{lewin}:
\begin{align}\label{eq:imaginary_axis}
\Re Li_2( ir) = Li_2(-r^2)/4 \leq 0,
\quad
\Im Li_2( i r ) =
\int_0^r \tan^{-1}(y) \, \frac{dy}{y}  \geq 0  .
\end{align}

\begin{lemma}\label{lemma:f1(ir)_lower_bound} 
 For $0<r \leq 1$,
$ \displaystyle \frac{ \pi \sqrt{2}}{8} \sqrt{r}  < f_1(ir) $.
\end{lemma}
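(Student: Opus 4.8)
The plan is to convert the statement into a single elementary inequality in $r$ by using the square‑root formula (\ref{eq:real_part_sq}) to get a closed form for $f_1(ir)$. Write $Li_2(ir) = A + iB$. The power series $Li_2(ir) = \sum_{n\ge 1}(ir)^n/n^2$ splits into $A = \Re Li_2(ir) = -\tfrac{r^2}{4} + \tfrac{r^4}{16} - \tfrac{r^6}{36} + \cdots$ and $B = \Im Li_2(ir) = r - \tfrac{r^3}{9} + \tfrac{r^5}{25} - \cdots$, in agreement with the identities in (\ref{eq:imaginary_axis}). Both are alternating series whose terms strictly decrease in modulus on $(0,1]$, so I would truncate: after the first term of $-A$ this gives $-\tfrac{r^2}{4} \le A \le 0$, and after the first two terms of $B$ it gives $B \ge r - \tfrac{r^3}{9} > 0$. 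Since $B > 0$, applying (\ref{eq:real_part_sq}) to $z = Li_2(ir)$ together with the nonnegativity convention in Definition \ref{def:root_dilog} yields $f_1(ir) = \tfrac{1}{\sqrt 2}\sqrt{\,A + \sqrt{A^2+B^2}\,}$.

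From here I would use only $\sqrt{A^2+B^2}\ge B$ together with the two bounds above: $A + \sqrt{A^2+B^2}\ge A + B \ge -\tfrac{r^2}{4} + r - \tfrac{r^3}{9} = r\bigl(1 - \tfrac r4 - \tfrac{r^2}{9}\bigr)$, and the bracketed factor is decreasing on $(0,1]$, hence at least $1 - \tfrac14 - \tfrac19 = \tfrac{23}{36}$. This gives $f_1(ir)^2 \ge \tfrac{23}{72}\,r$, so the whole statement reduces to the numerical comparison $\tfrac{23}{72} > \tfrac{\pi^2}{32} = \bigl(\tfrac{\pi\sqrt 2}{8}\bigr)^2$, i.e. $23\cdot 32 = 736 > 72\pi^2$; since $\pi^2 < 9.87$ we have $72\pi^2 < 711 < 736$. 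Taking positive square roots then yields $f_1(ir) > \tfrac{\pi\sqrt 2}{8}\sqrt r$.

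The only real obstacle is that the inequality is not far from tight: the ratio $f_1(ir)/\sqrt r$ decreases from $1/\sqrt2 \approx 0.707$ as $r\to 0^+$ to roughly $0.605$ at $r=1$, while $\pi\sqrt2/8 \approx 0.555$, leaving only a modest margin near $r=1$. Consequently the crudest estimates fail — replacing $B$ by its global lower bound $\tfrac{\pi}{4}r$ on $(0,1]$, or being careless about the magnitude of the negative quantity $A$, already destroys the argument for $r$ close to $1$. The point is to arrange things so nothing is wasted beyond the single inequality $\sqrt{A^2+B^2}\ge B$: keep the $-r^3/9$ term in the lower bound for $B$ and use the sharp bound $A \ge -r^2/4$, after which the proof collapses to the one-line check $\tfrac{23}{72} > \tfrac{\pi^2}{32}$.
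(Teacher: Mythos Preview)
Your proof is correct and starts from the same square-root formula $f_1(ir)=\tfrac{1}{\sqrt 2}\sqrt{A+\sqrt{A^2+B^2}}$ as the paper, but the two arguments diverge in how they bound the radicand. The paper lower-bounds the full modulus $\sqrt{A^2+B^2}=|Li_2(ir)|$ by $\tfrac{\pi^2}{12}r$ via the minimum-modulus estimate for $Li_2(z)/z$ (Lemma~\ref{lemma:fkrangebounds}), and lower-bounds $A=\tfrac14 Li_2(-r^2)$ by the secant line $-\tfrac{\pi^2}{48}r$ using the concavity of $Li_2(-r^2)$; these two structural estimates combine to give \emph{exactly} $\tfrac{\pi^2}{32}r$ as the lower bound for $f_1(ir)^2$. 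You instead throw away only the $A^2$ term via $\sqrt{A^2+B^2}\ge B$ and then use alternating-series truncations $A\ge -r^2/4$ and $B\ge r-r^3/9$ directly from the power series, reaching $f_1(ir)^2\ge \tfrac{23}{72}r$ and closing with the numerical check $\tfrac{23}{72}>\tfrac{\pi^2}{32}$. Your route is more self-contained --- it needs neither univalence of $Li_2$ nor concavity of $Li_2(-r^2)$ --- while the paper's route has the advantage of explaining why the constant $\tfrac{\pi\sqrt2}{8}$ is the natural one: it is precisely what the two global bounds on $Li_2$ produce, with no slack to spare.
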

\begin{proof}
 To find this bound for $f_1(ir)$ on $(0,1]$, we start  by explicitly writing
 $f_1(ir)$ using equation (\ref{eq:real_part_sq})
 \begin{align*}
 f_1(ir)
 &=
 \Re \sqrt{ Li_2( ir)}
 =
 \frac{1}{\sqrt{2}} \sqrt{
 \Re Li_2( ir) + \vert Li_2( ir)\vert }
 \\
& >
\frac{1}{\sqrt{2}} \sqrt{
 - \frac{1}{48} \pi^2 r +  \frac{1}{12} \pi^2 r } = \frac{ \sqrt{2}}{8}  \pi \, \sqrt{r}   ,
 \end{align*}
 where 
 \[
 \Re Li_2( ir) = \frac{1}{4} Li_2( - r^2) > \frac{1}{4} Li_2(-1) r = - \frac{1}{48} \pi^2 r, \quad 0<r  < 1  .
\] 
Here we used 
 that $Li_2(-r^2)$ is concave so its secant line on $[0,1]$ gives a lower bound.
\end{proof}

\begin{lemma}
The function $ r\mapsto \theta= \arg( Li_2(ir ))$ is increasing on $[0,1]$.
\end{lemma}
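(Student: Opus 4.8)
The plan is to show $\theta(r)=\arg Li_2(ir)$ is increasing on $[0,1]$ by combining the explicit formulas in~\eqref{eq:imaginary_axis} with the monotonicity facts already established. Since $\Re Li_2(ir)=\tfrac14 Li_2(-r^2)\le 0$ and $\Im Li_2(ir)=\int_0^r \tan^{-1}(y)\,dy/y\ge 0$, the point $Li_2(ir)$ always lies in the closed second quadrant, so $\theta(r)\in[\pi/2,\pi)$ and $\theta(0^+)=\pi/2$. Because $\tan$ is increasing on $(\pi/2,\pi)$ in the sense that $\tan\theta$ (which is $\le 0$ here) increases toward $0$ as $\theta$ increases toward $\pi$, it suffices to prove that
\[
\tan\theta(r) = \frac{\Im Li_2(ir)}{\Re Li_2(ir)} = \frac{\int_0^r \tan^{-1}(y)\,dy/y}{\tfrac14 Li_2(-r^2)}
\]
is an increasing function of $r$ on $(0,1]$ (it is negative throughout, tending to $-\infty$ as $r\to 0^+$ and increasing toward its value at $r=1$).

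The natural approach is to write $\tan\theta(r)=N(r)/D(r)$ with $N(r)=\int_0^r \tan^{-1}(y)\,dy/y>0$ and $D(r)=-\tfrac14 Li_2(-r^2)>0$, so that $\tan\theta=-N/D$, and then show $N/D$ is decreasing, i.e.\ $N'D-ND'\le 0$. We have $N'(r)=\tan^{-1}(r)/r$ and, differentiating the series $-Li_2(-r^2)=\sum_{n\ge1}(-1)^{n+1}r^{2n}/n^2$, $D'(r)=\tfrac14\sum_{n\ge1}(-1)^{n+1}2r^{2n-1}/n = \tfrac{1}{2r}\sum_{n\ge1}(-1)^{n+1}r^{2n}/n=\tfrac{1}{2r}\log(1+r^2)$. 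So the required inequality is
\[
\frac{\tan^{-1}(r)}{r}\cdot\Bigl(-\tfrac14 Li_2(-r^2)\Bigr) \le \Bigl(\int_0^r \frac{\tan^{-1}(y)}{y}\,dy\Bigr)\cdot \frac{\log(1+r^2)}{2r}.
\]
Multiplying through by $r>0$, this is a clean inequality between two explicit functions of $r$ that vanish to the same order as $r\to 0^+$; the standard route is to show the difference (right side minus left side) is nonnegative by checking it vanishes at $r=0$ and has nonnegative derivative, or equivalently to use a Chebyshev/Cauchy--Schwarz-type argument on the integral representations, exploiting that $\tan^{-1}(y)/y$ is decreasing in $y$ while the implicit weight coming from $\log(1+r^2)$ is arranged to dominate.

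Alternatively, and perhaps more cleanly, one can avoid the ratio entirely: $\theta(r)$ is increasing iff $\Re Li_2(ir)\cdot \Im (Li_2(ir))' - \Im Li_2(ir)\cdot \Re(Li_2(ir))' \ge 0$, which upon substituting $\frac{d}{dr}Li_2(ir)= -\frac{i}{r}\log(1-ir)$ (from $Li_2'(w)=-\log(1-w)/w$) reduces to showing $\Im\bigl(\overline{Li_2(ir)}\cdot(-i\log(1-ir))/r\bigr)\ge0$, i.e.\ $\Re\bigl(\overline{Li_2(ir)}\,\log(1-ir)\bigr)\ge 0$. Writing $\log(1-ir)=\tfrac12\log(1+r^2)-i\tan^{-1}(r)$ and using the signs of $\Re Li_2(ir)\le 0$, $\Im Li_2(ir)\ge 0$ noted above, this becomes
\[
\tfrac12\log(1+r^2)\,\Re Li_2(ir) + \tan^{-1}(r)\,\Im Li_2(ir)\ge 0,
\]
which is exactly the inequality from the previous paragraph after clearing denominators. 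The main obstacle is this final analytic inequality: both sides are transcendental, and a brute-force derivative comparison is somewhat delicate near $r=0$ where everything degenerates. I expect the cleanest resolution is to expand both $\Re Li_2(ir)=\tfrac14 Li_2(-r^2)$ and $\Im Li_2(ir)$ as power series in $r^2$ (using $\Im Li_2(ir)=\sum_{n\ge0}(-1)^n r^{2n+1}/(2n+1)^2$ and the product series for $\log(1+r^2)$, $\tan^{-1}(r)$) and verify the resulting coefficient inequalities term by term, reducing the whole lemma to an elementary, if slightly tedious, comparison of alternating series coefficients.
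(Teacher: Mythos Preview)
Your reduction is correct: on $(\pi/2,\pi)$, $\theta$ increasing is equivalent to $\tan\theta=y/x$ increasing, equivalently $N/D$ decreasing where $N=\Im Li_2(ir)>0$ and $D=-\Re Li_2(ir)>0$, and this amounts to the inequality $N'D\le ND'$. But you never actually prove that inequality---the power-series and Chebyshev routes are left as speculation, and a term-by-term comparison of products of alternating series is considerably more delicate than you indicate. That is the gap.

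The paper avoids this hard first-order inequality by going one derivative higher. From~\eqref{eq:imaginary_axis} one checks in one line each that $\dot x,\ddot x,\ddot y<0$ and $\dot y>0$ on $(0,1)$. These four signs alone force $N'/D'=\dot y/(-\dot x)$ to be strictly decreasing, since its $r$-derivative has numerator $-\dot x\,\ddot y+\dot y\,\ddot x<0$. Because $N(0)=D(0)=0$ and $D'>0$, the monotone form of L'H\^opital's rule (equivalently: $y$ is a concave function of $-x$ through the origin, so its secant slope from the origin is decreasing) then yields $N/D$ decreasing---which is exactly your target inequality $N'D\le ND'$, obtained for free. The idea you are missing is that the ratio of \emph{derivatives} has monotonicity that is immediate from sign information, whereas the ratio of the functions themselves does not; concavity bridges the two. (The paper's displayed line writes ``$dy/dx=\tan\theta$'' and then differentiates $\dot y/\dot x$ as if it were $\tan\theta$---a slip in presentation---but the sign computation $\dot x\,\ddot y-\ddot x\,\dot y>0$ is the substance, and the argument just given is how it completes the lemma.)
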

\begin{proof}
For convenience, write $x(r) = \Re Li_2( ir ),$  $y(r) = \Im Li_2(ir) $, and
$dy/dx = \tan \theta$. We will use the dot notation for derivative. By equation
(\ref{eq:imaginary_axis}), we note that
  $\dot{x}$, $\ddot{x}$, and $\ddot{y}$ are negative  while $\dot{y}$ is positive.
Consequently,  the sign of the desired derivative is positive:
\[
\frac{d}{dr}
\tan \theta
=
\frac{ \dot{x} \, \ddot{y} - \ddot{x} \, \dot{y}}{ ( \dot{x} )^2} > 0
\]
\end{proof}

\begin{remark}\label{remark:f1(ir)}
{\rm 
Since $\theta(r)$ is increasing, $\Re Li_2(ir) < 0$, and $\Im Li_2(ir)>0$,
the range of $\theta(r)$ must lie in $[\pi/2,3 \pi/2]$.
By equation (\ref{eq:imaginary_axis}),  
$Li_2(i) $ is given explicitly by $ - \pi^2/48 + i G$, where
$G$ is the Catalan constant $\sum_{n=0}^\infty (-1)^n/(2n+1)^2$.
We find that $\arg Li_2(i) = \pi - \arctan( 48 G / \pi^2)  \simeq 1.79161$
so the range of $\theta(r)$ is exactly $[\pi/2 , \arg Li_2( i )]$ which lies inside
$[ \pi/2, \pi/3]$.
Further,  using equation  (\ref{eq:real_part_sq}), 
we record  the explicit form for $f_1(i)$ 
\begin{align*}
f_1(i ) 
=
\frac{1}{4 \sqrt{6}} \sqrt{ - \pi^2+  \sqrt{ \pi^4+ 2304 G^2}} 
\end{align*}
so  we can verify $f_1( i ) < f_2(1) = \pi  \sqrt{6} /12$ by direct calculation.

To complete the proofs in the section, we need  the bounds
\begin{align*} 
\frac{1}{2}
  \leq \cos\left(\frac{\theta(r)}{2}\right) \leq  \frac{\sqrt{2}}{2} 
\leq
   \sin\left(\frac{\theta(r)}{2}\right) \leq \frac{\sqrt{3}}{2} . 
   \end{align*}
If needed, these bounds can be tighten by using 
 \begin{align*}
  \cos(\theta(1)/2) 
  &=
   \sqrt{ (a- \pi^2)/(2a)} \simeq 0.62 488,
  \\
 \sin\left( \theta(1)/2\right) 
 &=  
  \sqrt{ (a+ \pi^2)/(2a)}   \simeq 0.78071
 \end{align*}
with $a= \sqrt{ \pi^4+2304 G^2}.$
} \end{remark}

\begin{lemma}
$f_1(ir)$ is increasing on $[0,1]$.
\end{lemma}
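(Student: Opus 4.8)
The plan is to write $f_1(ir)$ in polar form, as was done for the circle case, namely
\[
f_1(ir) = \bigl| Li_2(ir) \bigr|^{1/2} \cos\!\left( \tfrac{1}{2} \theta(r) \right),
\]
where $\theta(r) = \arg Li_2(ir)$, and then show that both factors are nondecreasing in $r$. For the modulus factor, $|Li_2(ir)|$ is increasing on $[0,1]$ because $Li_2$ is univalent on the unit disk with a simple zero only at the origin, so $|Li_2(ir)|/r$ is (by the minimum modulus principle applied along the segment, or more simply by monotonicity of the Taylor-series-with-positive-even-coefficients expansion $\Im Li_2(ir)$ together with $|\Re Li_2(ir)| = |Li_2(-r^2)|/4$ increasing) nondecreasing; in any case each of $\Re Li_2(ir) = Li_2(-r^2)/4$ and $\Im Li_2(ir) = \int_0^r \arctan(y)\,dy/y$ is monotone in $|\cdot|$ with the right sign, so $|Li_2(ir)|^2 = (\Re)^2 + (\Im)^2$ is increasing. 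For the cosine factor, the previous lemma shows $\theta(r)$ is increasing, and by Remark \ref{remark:f1(ir)} its range lies in $[\pi/2, \arg Li_2(i)] \subset [\pi/2, 3\pi/4]$, so $\tfrac12\theta(r) \in [\pi/4, \tfrac38\pi]$ stays in the first quadrant where cosine is decreasing — wait, decreasing, not increasing. So $\cos(\tfrac12\theta(r))$ is \emph{decreasing} in $r$, and I cannot simply multiply two monotone factors.

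Therefore the real work is to show the \emph{product} is increasing even though one factor decreases. I would do this by differentiating directly. Writing $x(r) = \Re Li_2(ir)$, $y(r) = \Im Li_2(ir)$, so that $f_1(ir) = \tfrac{1}{\sqrt 2}\sqrt{x(r) + \sqrt{x(r)^2 + y(r)^2}}$ by equation \eqref{eq:real_part_sq} (with $x \le 0 \le y$), the derivative is
\[
\frac{d}{dr} f_1(ir)
= \frac{1}{2\sqrt 2} \cdot \frac{\dot x + \dfrac{x\dot x + y\dot y}{\sqrt{x^2+y^2}}}{\sqrt{x + \sqrt{x^2+y^2}}}.
\]
The denominator is positive, so the sign is that of $\dot x\sqrt{x^2+y^2} + x\dot x + y\dot y$. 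Here $\dot x = \tfrac12 r\, Li_2'(-r^2)\cdot(-2) \cdot \tfrac{1}{?}$ — more cleanly, $\dot x = -\tfrac{1}{4r}\ln(1+r^2) \cdot$ (using $Li_2'(w) = -\ln(1-w)/w$, so $\tfrac{d}{dr}Li_2(-r^2)/4 = \tfrac14 \cdot \tfrac{-\ln(1+r^2)}{-r^2}\cdot(-2r) = -\tfrac{\ln(1+r^2)}{2r}$), which is negative, and $\dot y = \arctan(r)/r > 0$. So I must show
\[
y \dot y + x\dot x \ge -\dot x \sqrt{x^2+y^2} = |\dot x|\sqrt{x^2+y^2},
\]
i.e. $y\dot y \ge |\dot x|\sqrt{x^2+y^2} + |x||\dot x|$ (using $x < 0$), which would follow from $y\dot y \ge 2|\dot x| y$ (since $\sqrt{x^2+y^2} + |x| \le 2y$ fails in general — rather $\sqrt{x^2+y^2}\le y + |x|$, so $|\dot x|(\sqrt{x^2+y^2}+|x|) \le |\dot x|(y + 2|x|)$), so it suffices to prove $y\,\dot y \ge |\dot x|\,y + 2|x|\,|\dot x|$, equivalently $\dot y \ge |\dot x| + 2|x||\dot x|/y$. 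I expect to control this using the bounds already recorded: $|x| = |Li_2(-r^2)|/4 \le \tfrac{\pi^2}{48} r$, $y \ge$ something like $\tfrac{\pi^2}{12}r - $ (small), and explicit comparison of $\dot y = \arctan(r)/r$ with $|\dot x| = \ln(1+r^2)/(2r)$; note $\arctan r \ge \tfrac12\ln(1+r^2)$ on $[0,1]$ by comparing series, which already gives $\dot y \ge |\dot x|$ with room to spare, and the error term $2|x||\dot x|/y$ is $O(r)$ and small on $[0,1]$.

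For concavity, I would differentiate once more, or — likely cleaner — argue that $g(r) := f_1(ir)^2 = \tfrac12(x + \sqrt{x^2+y^2})$ and analyze $g$ together with the already-established positivity and monotonicity of $f_1(ir)$; showing $g$ is concave would not immediately give $\sqrt g$ concave, so instead I would estimate $\ddot f_1(ir)$ directly from the formula above, bounding the second-derivative terms $\ddot x = -\tfrac{d}{dr}\bigl(\tfrac{\ln(1+r^2)}{2r}\bigr)$ and $\ddot y$ (both negative, by equation \eqref{eq:imaginary_axis} and the previous lemma) and showing the negative contributions dominate. \textbf{The main obstacle} is precisely this concavity step: the square-root composition means the naive sign analysis of $\ddot f_1$ produces several competing terms of mixed sign, and making the estimate go through on the whole interval $(0,1)$ will require the sharp numerical bounds on $\cos(\theta(r)/2)$, $\sin(\theta(r)/2)$ from Remark \ref{remark:f1(ir)} together with the Catalan-constant value of $\arg Li_2(i)$, rather than any soft argument. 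I would reduce concavity to a single scalar inequality in $r$ on $(0,1)$ and verify it using these explicit constants and elementary estimates for $\arctan r$ and $\ln(1+r^2)$.
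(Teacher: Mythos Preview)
The lemma asks only that $f_1(ir)$ is increasing; concavity is the \emph{next} lemma, so the last paragraph of your proposal is off-topic here.

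For the increasing part, your direct-differentiation approach via $f_1(ir)=\tfrac{1}{\sqrt2}\sqrt{x+\sqrt{x^2+y^2}}$ is sound but contains a sign slip: since both $x<0$ and $\dot x<0$, the product $x\dot x = |x|\,|\dot x|$ is \emph{positive} and moves to the other side with a minus sign, so the correct target is
\[
y\dot y \;\ge\; |\dot x|\bigl(\sqrt{x^2+y^2}-|x|\bigr),
\]
not $+|x|$. With the right sign the argument closes at once: $\sqrt{x^2+y^2}-|x|\le y$ (triangle inequality, $y\ge0$) reduces everything to $\dot y\ge|\dot x|$, i.e.\ $\arctan r\ge\tfrac12\ln(1+r^2)$, which you already note and which equals $\int_0^r\frac{1-t}{1+t^2}\,dt\ge0$ on $[0,1]$. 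None of the further estimates you sketch are needed.

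The paper reaches the same elementary inequality by a cleaner route: it differentiates the \emph{complex} function directly,
\[
\frac{d}{dr}\sqrt{Li_2(ir)}=-\frac{\ln(1-ir)}{2r\sqrt{Li_2(ir)}}=-\frac{\tfrac12\ln(1+r^2)-i\arctan r}{2r\,|Li_2(ir)|^{1/2}e^{i\theta(r)/2}},
\]
and then takes the real part in polar form. The sign of $\tfrac{d}{dr}f_1(ir)$ is immediately that of
\[
\sin\!\bigl(\tfrac{\theta(r)}{2}\bigr)\arctan r-\tfrac12\cos\!\bigl(\tfrac{\theta(r)}{2}\bigr)\ln(1+r^2),
\]
and since $\theta(r)/2\in[\pi/4,\pi/3]$ one has $\sin\ge\tfrac{\sqrt2}{2}\ge\cos$ there, giving the lower bound $\tfrac{\sqrt2}{2}\bigl(\arctan r-\tfrac12\ln(1+r^2)\bigr)\ge0$. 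Both routes end at the same place; the complex differentiation simply bypasses the nested-radical algebra and the sign bookkeeping that tripped you up.
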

\begin{proof}
We write out the real part of the derivative
\[
\frac{d}{dr} \sqrt{ Li_2( i r) }
=
-  \,
\frac{  \ln (1- ir) }{  2  r \sqrt{ Li_2( ir) } } .
\]
using the polar form of
 $\displaystyle
 \sqrt{Li_2( ir)  } = \vert Li_2( ir)\vert^{1/2} e^{ i \theta(r)/2} 
 $
 to get
\[
\frac{d}{dr} \Re \sqrt{ Li_2( i r) }
=
-  \, \Re\left(
\frac{   \frac{1}{2} \ln(1+r^2) - i \arctan(r)   }{  2  r \vert Li_2( ir)\vert^{1/2}  e^{i \theta(r)/2}  }
\right)  .
\]
Now its sign is the same as the sign of
\[
\sin(\theta(r)/2) \arctan(r) - \frac{1}{2}  \cos(\theta(r)/2) \ln(1+r^2)
\]
which is bounded below by
\[
\frac{\sqrt{2}}{2} 
\left( 
\arctan(r) - \frac{1}{2}   \ln(1+r^2)
\right)
=
\frac{\sqrt{2}}{2}  \,
\int_0^r \frac{1-t}{ 1+t^2} \, dt \geq 0 
\]
since $\theta(r)/2 $ lies in the interval $ [\pi/4, \pi/3]$.
\end{proof}

\begin{lemma}\label{lemma:f_1(ir)_concave}
For $r\in (0,1),$ $f_1(ir)$ is a concave function.
\end{lemma}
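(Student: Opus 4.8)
The plan is to reduce the assertion to the concavity of $r\mapsto f_1(ir)^2$. By (\ref{eq:real_part_sq}) (with the square root taken so that the real part is nonnegative) together with (\ref{eq:imaginary_axis}),
\[
f_1(ir)^2 \;=\; \tfrac12\bigl(\Re Li_2(ir)+|Li_2(ir)|\bigr)\;=\;\tfrac12\Bigl(\tfrac14 Li_2(-r^2)+|Li_2(ir)|\Bigr).
\]
Since $t\mapsto\sqrt t$ is concave and nondecreasing on $[0,\infty)$ and $f_1(ir)\ge 0$, the composite of $\sqrt{\cdot}$ with a nonnegative concave function is again concave; hence it suffices to show the right–hand side is concave on $(0,1)$, and since a sum of concave functions is concave, it is enough to treat its two summands separately. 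The first is immediate: $\frac{d^2}{dr^2}Li_2(-r^2)=-\frac{2}{r^2}\bigl(\frac{2r^2}{1+r^2}-\ln(1+r^2)\bigr)$, and the bracket is nonnegative on $(0,1]$ because both terms vanish at $r=0$ while $\frac{d}{dr}\frac{2r^2}{1+r^2}=\frac{4r}{(1+r^2)^2}\ge\frac{2r}{1+r^2}=\frac{d}{dr}\ln(1+r^2)$ there (equivalently $r^2\le1$).

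It remains to prove that $\rho(r):=|Li_2(ir)|$ is concave on $(0,1)$. Put $u=\Re Li_2(ir)$ and $v=\Im Li_2(ir)$, so $\rho^2=u^2+v^2$; by (\ref{eq:imaginary_axis}) we have $u<0$, $v>0$, and the explicit derivatives
\[
u'=-\frac{\ln(1+r^2)}{2r},\qquad v'=\frac{\arctan r}{r},\qquad u''=-\frac{1}{2r^2}\Bigl(\frac{2r^2}{1+r^2}-\ln(1+r^2)\Bigr)<0,\qquad v''=\frac{1}{r^2}\Bigl(\frac{r}{1+r^2}-\arctan r\Bigr)<0 .
\]
Differentiating $\rho^2=u^2+v^2$ twice and using the Lagrange identity $(u'^2+v'^2)(u^2+v^2)-(uu'+vv')^2=(u'v-uv')^2$ gives
\[
\rho\,\rho''=\frac{(u'v-uv')^2}{u^2+v^2}+uu''+vv'' ,
\]
so that $\rho''\le0$ is equivalent to the single inequality
\[
(u'v-uv')^2\;\le\;-(u^2+v^2)\,(uu''+vv'') \;=\;(u^2+v^2)\bigl(v\,|v''|-|u|\,|u''|\bigr).
\]

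The main obstacle is precisely this inequality, and I would prove it in two steps. Step one is $uu''+vv''<0$, i.e. $v\,|v''|>|u|\,|u''|$; here the elementary estimates $|u|=-\tfrac14 Li_2(-r^2)\le\tfrac14 r^2$ (alternating series) and $|u''|\le\tfrac12$, together with $v\ge\arctan r$ (because $t\mapsto\arctan t/t$ is decreasing) and $|v''|=\frac1{r^2}\bigl(\arctan r-\frac{r}{1+r^2}\bigr)$, reduce matters to an elementary comparison between $\arctan r/r$ and a rational function of $r$. Step two, bounding $(u'v-uv')^2$ from above, is the delicate part: writing $u'v-uv'=\frac{\arctan r}{r}\,|u|-\frac{\ln(1+r^2)}{2r}\,v$ one sees that the two terms nearly cancel (indeed $u'v-uv'=O(r^2)$ as $r\to0$, whereas a crude triangle–inequality bound is only $O(r)$ and is already too lossy near $r=1$), so one must compare the two terms through sharp two–sided rational bounds for $\arctan r$ and $\ln(1+r^2)$; equivalently one may use $u'v-uv'=-\rho^2\theta'$, with $\theta=\arg Li_2(ir)$ as in the preceding lemmas, and control $\theta'$ (in fact $\theta'\le\tfrac14$ on $(0,1)$, again tight as $r\to0$) using the range $\theta(r)\in[\pi/2,\arg Li_2(i)]$ recorded in Remark~\ref{remark:f1(ir)}. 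A short separate treatment of $r$ near $1$, where all the quantities involved are smallest, completes the estimate; combining the two steps gives $\rho''\le 0$, and with the concavity of $\tfrac14 Li_2(-r^2)$ already in hand this yields the concavity of $f_1(ir)^2$ and hence of $f_1(ir)$.
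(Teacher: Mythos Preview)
Your reduction is sound in principle: since $\sqrt{\cdot}$ is concave and nondecreasing, concavity of $g(r)=f_1(ir)^2=\tfrac12\bigl(\tfrac14 Li_2(-r^2)+|Li_2(ir)|\bigr)$ would indeed imply concavity of $f_1(ir)$, and the concavity of $\tfrac14 Li_2(-r^2)$ is correctly verified. The gap is that you do not actually prove that $\rho(r)=|Li_2(ir)|$ is concave. Your ``Step two'' is the heart of the matter, and what you offer there is a description of the difficulty (the near-cancellation in $u'v-uv'$, the need for sharp rational bounds on $\arctan$ and $\ln(1+r^2)$, the side claim $\theta'\le\tfrac14$, and an unspecified ``separate treatment near $r=1$'') rather than an argument. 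None of these auxiliary inequalities is established, and the text as written would not allow a reader to verify that $(u'v-uv')^2\le (u^2+v^2)(v|v''|-|u||u''|)$ on the whole interval. Until those estimates are actually carried out, the proof is incomplete.

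By contrast, the paper avoids the passage through $f_1(ir)^2$ and the modulus $\rho$ altogether: it differentiates $\Re\sqrt{Li_2(ir)}$ twice directly, writes the result as $\Re(g_1+g_2)$ with
\[
g_1(r)=-\frac{(\tfrac12\ln(1+r^2)-i\arctan r)^2}{4\,Li_2(ir)^{3/2}r^2},\qquad
g_2(r)=\frac{i/2}{Li_2(ir)^{1/2}(1-ir)r}+\frac{\tfrac12\ln(1+r^2)-i\arctan r}{2\,Li_2(ir)^{1/2}r^2},
\]
and then uses the polar angle $\theta(r)=\arg Li_2(ir)\in[\pi/2,2\pi/3]$ (so $\cos(3\theta/2)<0$, $\sin(3\theta/2)>0$, and $\cos(\theta/2),\sin(\theta/2)>0$) to check the sign of $\Re g_1$ and $\Re g_2$ separately via two short integral identities. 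This sidesteps the delicate cancellation you encountered: the polar decomposition automatically organizes the terms so that only coarse sign information about the trigonometric factors is needed, rather than sharp quantitative bounds on $\theta'$ or on the difference $u'v-uv'$. If you wish to salvage your route, you would need to supply genuine proofs of $\theta'\le\tfrac14$ and of the comparison $\rho^2(\theta')^2\le v|v''|-|u||u''|$ on all of $(0,1)$; otherwise the paper's direct computation is both shorter and complete.
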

\begin{proof}
We use  polar form again this time  for the second derivative of $f_1(ir)$. 
For the sake of notation, we introduce 
$g_1(r)$ and $g_2(r)$ by
\[
\frac{d^2}{dr^2}
\Re \sqrt{ Li_2( ir)}
=
\Re\left( g_1(r) + g_2(r) \right)
\]
where
\begin{align*}
g_1(r)
&=
- \frac{1}{4}
\frac{ ( \frac{1}{2} \ln(1+r^2) - i \arctan(r) )^2 }{ Li_2(ir)^{3/2} r^2 } ,\\
g_2(r)
&=
\frac{ i/2}{ Li_2( ir)^{1/2} (1- ir) r} 
+
\frac{1}{2}
\frac{ \frac{1}{2} \ln(1+r^2) - i \arctan(r) }{ Li_2(ir)^{1/2} r^2} .
\end{align*}
  Since $\pi/2 \leq \theta(r)\leq  2 \pi/3$,  we have
  $\sin( 3 \theta(r)/2) $ is positive while $\cos( 3 \theta(r)/2) $  is negative. 
  It is enough  show that the signs of $\Re g_1(r)$ and $\Re g_2(r)$ are negative.
Now the sign of $\Re g_1(r)$ is the same as the sign of
\begin{align*}
-\Re
&
\left[
\big(   \tfrac{1}{4} (\ln(1+r^2))^2 - i (\arctan(r))^2  \big) \,  \left( \cos ( 3\theta(r)/2) - i \sin( 3\theta(r)/2) \right)
\right]
\\
&
\qquad =
\cos( 3 \theta(r)/2) 
\left\{
 ( \arctan(r))^2   - \tfrac{1}{4} ( \ln(1+r^2))^2 
\right\}
\\
& \qquad \qquad
-
\sin(3 \theta(r)/2) \ln(1 + r^2) \arctan( r) 
\end{align*}
which is negative since $\sin(3 \theta(r)/2) \ln(1 + r^2) \arctan( r) $ is positive and
\begin{align*}
 ( \arctan(r))^2   - \tfrac{1}{4} ( \ln(1+r^2))^2 
 &=
 \int_0^r \int_0^r \frac{ ( 1-s)(1+t)}{ (1+s^2) (1+t^2)} \, ds dt >0
\end{align*}
as well. The sign of $\Re g_2(r)$ is the same as the sign of 
\begin{align*}
&
\frac{1}{ 1+r^2} \left( - r \cos( \theta(r)/2) + \sin( \theta(r)/2) \right)
\\
& \qquad
+
\frac{1}{r} 
\left(
\tfrac{1}{2}  \ln(1+r^2) \cos( \theta(r)/2) - \arctan(r) \sin( \theta(r)/2)
\right)
\\
&\,\, =
\frac{1}{r} \int_0^r \frac{  r \cos( \theta(r)/2) + \sin( \theta(r)/2) }{ 1+r^2} \, dt
+
\frac{1}{r} \int_0^r \frac{t \cos(\theta(r)/2) - \sin( \theta(r)/2)}{ 1+t^2} \, dt
\\
&\qquad\qquad
=
\frac{ \cos( \theta(r)/2)}{r}
\int_0^r \left( \frac{t}{1+t^2} - \frac{r}{1+r^2} \right) \, dt
\\
&\qquad \qquad \qquad
+
\,\,
\frac{ \sin( \theta(r)/2)}{r} \int_0^r \left( \frac{1}{ 1+r^2} - \frac{1}{ 1+t^2} \right) \, dt 
\end{align*}
which is negative
since both integrals are negative while $ \sin( \theta(r)/2 )$ and $ \cos( \theta(r)/2 )$ are positive.
\end{proof}

Although the second derivative of $f_1(r)$ changes sign, $f_2(r)$ is convex.

\begin{lemma}\label{lemma:f_2_convex}
$f_2(r)$ is convex.
\end{lemma}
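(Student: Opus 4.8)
The plan is to reduce the convexity of $f_2$ on $(0,1)$ to an explicit inequality for $Li_2$ and then to verify that inequality by comparing power‑series coefficients. On $(0,1)$ the point $r^2$ lies in $(0,1)$, so $Li_2(r^2)$ is real and positive and $f_2(r)=\tfrac12\sqrt{Li_2(r^2)}$ with the ordinary positive square root. Put $F(r)=Li_2(r^2)$; it is $C^\infty$ and strictly positive on $(0,1)$. Since
\[
\bigl(\sqrt F\bigr)'' = \frac{2FF'' - (F')^2}{4F^{3/2}},
\]
convexity of $f_2$ is equivalent to $2FF''\ge (F')^2$ throughout $(0,1)$.

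Next I would make this concrete. From $\frac{d}{dx}Li_2(x)=-\ln(1-x)/x$ we get $F'(r)=-2\ln(1-r^2)/r$ and, after one more differentiation, $F''(r)=\frac{2}{r^2}\bigl(\tfrac{2r^2}{1-r^2}+\ln(1-r^2)\bigr)$; note in passing that $F''>0$, since $v\mapsto \tfrac{2v}{1-v}+\ln(1-v)$ vanishes at $0$ and has derivative $\tfrac{1+v}{(1-v)^2}>0$. Substituting these into $2FF''\ge (F')^2$ and cancelling the positive factor $4/r^2$, the claim becomes, with $s=r^2\in(0,1)$,
\[
Li_2(s)\left(\frac{2s}{1-s}+\ln(1-s)\right)\ \ge\ \bigl(\ln(1-s)\bigr)^2 .
\]
(One cannot simplify by factoring $\sqrt{Li_2(r^2)}$ through $\sqrt{Li_2(x)}$: that stronger function is actually not convex near $0$; the substitution $x=r^2$ is what tips the balance.)

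The substantive step is proving the displayed inequality. I would expand both sides about $s=0$. Using $Li_2(s)=\sum_{n\ge1}s^n/n^2$, $\frac{2s}{1-s}=2\sum_{n\ge1}s^n$ and $\ln(1-s)=-\sum_{n\ge1}s^n/n$, the bracket equals $\sum_{n\ge1}\tfrac{2n-1}{n}s^n$, and comparing coefficients of $s^N$ the difference of the two sides is $\sum_{N\ge2}T_N s^N$ with
\[
T_N=\sum_{n=1}^{N-1}\left(\frac{2(N-n)-1}{n^2(N-n)}-\frac{1}{n(N-n)}\right)=\sum_{n=1}^{N-1}\frac{2N-3n-1}{n^2(N-n)} .
\]
It then suffices to show $T_N\ge0$ for all $N\ge2$. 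Writing $2N-3n-1=2(N-n)-(n+1)$ and using the partial fractions $\frac{1}{n^2(N-n)}=\frac{1}{Nn^2}+\frac{1}{N^2n}+\frac{1}{N^2(N-n)}$ together with $\sum_{n=1}^{N-1}\frac{1}{n(N-n)}=\frac{2}{N}H_{N-1}$, a short computation collapses $T_N$ into the closed form
\[
T_N=\frac{1}{N^2}\Bigl(N(2N-1)\,S_N-2(N+1)\,H_{N-1}\Bigr),\qquad S_N=\sum_{n=1}^{N-1}\frac1{n^2},\quad H_{N-1}=\sum_{n=1}^{N-1}\frac1n .
\]
Because $S_N\ge1$ for $N\ge2$, it is enough to verify $g(N):=N(2N-1)-2(N+1)H_{N-1}\ge0$; and $g(2)=0$, while $g(N+1)-g(N)=4N-1-\tfrac2N-2H_N>0$ for $N\ge2$ (using $H_N\le1+\ln N$), so $g(N)\ge0$ by induction. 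Hence $T_N\ge0$ for all $N\ge2$, the dilogarithm inequality holds on $(0,1)$, and $f_2$ is convex.

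I expect the main obstacle to be exactly this passage from the analytic inequality to the arithmetic statement $T_N\ge0$ and the evaluation of $T_N$: the numerator $2N-3n-1$ changes sign as $n$ runs from $1$ to $N-1$, so no term‑by‑term estimate works and the cancellation must be exhibited through the closed form above; once that form is in hand, the crude bound $S_N\ge1$ reduces the remainder to a one‑line induction.
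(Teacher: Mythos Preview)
Your proof is correct and follows essentially the same route as the paper: compute $f_2''$, clear positive denominators, and show the resulting expression has a power series in $s=r^2$ with nonnegative coefficients. The paper stops at the assertion that this nonnegativity is ``elementary''; you actually carry it out, reducing to $T_N=\frac{1}{N^2}\bigl(N(2N-1)S_N-2(N+1)H_{N-1}\bigr)\ge 0$ and finishing with the one--line induction on $g(N)=N(2N-1)-2(N+1)H_{N-1}$. So your argument is the same in spirit but strictly more complete on the only nontrivial step.
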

\begin{proof}
The second derivative of $f_2(r)$ is
\[
\frac{ -(  \ln( 1-r^2) )^2 }{ r^2 ( Li_2(r^2) )^{3/2}}
+
\frac{ \ln(1-r^2)}{ r^2 \sqrt{ Li_2(r^2)}}
+ 
\frac{2}{ (1-r^2) \sqrt{ Li_2(r^2)}}
\]
which has the same sign   as
\[
h(r) = 2r^2 Li_2(r^2) - (1-r^2) ( \ln(1-r^2)  )^2    + (1-r^2) \ln(1-r^2)    .
\]
It is elementary that the Taylor expansion of each term above has only even powers with nonnegative  coefficents starting with $r^6$.
\end{proof}

We record  that $f_k(r)$ is convex for $k \geq 2$.

\begin{proposition} 
[Proposition \ref{positivitybeta} in the main text]
There is a unique solution $\beta$ to $f_1(i r)= f_2(r)$ in $(0,1]$
further, it satisfies  $3/4< \beta <1$.  For $r \in (0, \beta),$  
$f_1(i r)> f_2(r)$ while for  $r\in (\beta,1),$   $f_1(i r)< f_2(r) .$
\end{proposition}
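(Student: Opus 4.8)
The plan is to study the single function $g(r)=f_1(ir)-f_2(r)$ on $[0,1]$, and the key structural observation is that $g$ is \emph{strictly concave} there. Indeed, $f_1(ir)$ is concave on $(0,1)$ by Lemma~\ref{lemma:f_1(ir)_concave}, while the Taylor computation in Lemma~\ref{lemma:f_2_convex} in fact yields $h(r)>0$ on $(0,1)$, hence $f_2''>0$; so $-f_2$ is strictly concave and $g$ is strictly concave on $(0,1)$. It extends continuously to $[0,1]$ with $g(0)=0$, since both $f_1(ir)\le \frac{\pi}{\sqrt{6}}\sqrt{r}$ and $f_2(r)\le \frac{\pi}{2\sqrt{6}}\,r$ tend to $0$ by Lemma~\ref{lemma:fkrangebounds}, and with $g(1)=f_1(i)-f_2(1)<0$ by the explicit estimate recorded in Remark~\ref{remark:f1(ir)}.

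Next I would pin down the sign of $g$ on $(0,3/4]$ using only the crude bounds. Lemma~\ref{lemma:f1(ir)_lower_bound} gives $f_1(ir)>\frac{\pi\sqrt{2}}{8}\sqrt{r}$, while part (1) of Lemma~\ref{lemma:fkrangebounds}, together with the strict inequality $Li_2(w)/w<\pi^2/6$ for $0<w<1$, gives $f_2(r)=\tfrac{1}{2}\sqrt{Li_2(r^2)}<\frac{\pi}{2\sqrt{6}}\,r$ for $0<r<1$. An elementary comparison shows $\frac{\pi\sqrt{2}}{8}\sqrt{r}\ge \frac{\pi}{2\sqrt{6}}\,r$ precisely when $r\le 3/4$ (with equality at $r=3/4$), so for $0<r\le 3/4$ one obtains $f_1(ir)>\frac{\pi\sqrt{2}}{8}\sqrt{r}\ge\frac{\pi}{2\sqrt{6}}\,r>f_2(r)$, i.e.\ $g(r)>0$; in particular $g(3/4)>0$. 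Combined with $g(1)<0$ and the intermediate value theorem, this already gives a zero of $g$ lying in the open interval $(3/4,1)$.

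Finally I would extract uniqueness and the full sign pattern from strict concavity. For a continuous strictly concave $g$ the superlevel set $\{g>0\}$ is an interval; since it is relatively open, contains $(0,3/4]$, and does not contain $1$, it equals $(0,\beta)$ for a single $\beta\in(3/4,1)$, with $g(\beta)=0$ by continuity. Strict concavity then rules out any further zero in $(\beta,1]$, since two zeros $\beta<\beta'$ would force $g\equiv 0$ on $[\beta,\beta']$ by the chord inequality, contradicting strictness; hence $g<0$ on all of $(\beta,1]$. This yields the unique solution $\beta$ of $f_1(ir)=f_2(r)$ in $(0,1]$, the bound $3/4<\beta<1$, and $f_1(ir)>f_2(r)$ for $r\in(0,\beta)$ while $f_1(ir)<f_2(r)$ for $r\in(\beta,1)$.

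The only point demanding care is the endpoint comparison at $r=3/4$: the crude lower bound for $f_1(ir)$ and the crude upper bound for $f_2(r)$ coincide exactly there, so $\beta>3/4$ is the sharpest conclusion these estimates can give, and one must keep \emph{both} of those comparisons strict for $r<1$ rather than hoping for a strict gap between the two bounds. Beyond this bit of bookkeeping I do not anticipate a genuine obstacle, since all the analytic input---concavity of $f_1(ir)$, strict convexity of $f_2(r)$, and the range bounds on $Li_2$ and on $f_1(ir)$---is already in hand.
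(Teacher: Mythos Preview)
Your proof is correct and follows essentially the same route as the paper: define the difference (the paper uses $h=f_2-f_1(i\cdot)=-g$), deduce convexity/concavity from Lemmas~\ref{lemma:f_1(ir)_concave} and~\ref{lemma:f_2_convex}, use the crude bounds $f_2(r)<\tfrac{\pi r}{2\sqrt{6}}\le \tfrac{\pi\sqrt{2r}}{8}<f_1(ir)$ on $(0,3/4]$ together with $f_1(i)<f_2(1)$ to locate a sign change in $(3/4,1)$, and invoke convexity to cap the zero count at two (with $r=0$ already accounting for one). Your superlevel-set phrasing of the last step is a clean variant of the paper's Rolle-based argument, but the substance is the same.
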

\begin{proof}
Define $h(r)=f_2(r)-f_1(ir)$ with $\in (0,1).$   By the last two Lemmas \ref{lemma:f_1(ir)_concave}
and
\ref{lemma:f_2_convex}, 
$h(r)$ must be a convex function and therefore has at most one critical point $h'(r)=0$ in $(0,1)$.
This also implies that it can have at most two roots since if $h(x_1)=h(x_2)=h(x_3)=0$ for some $x_1<x_2<x_3$ then by Rolle's theorem $h'(\xi_1)=h'(\xi_2)=0$ for $\xi_1\neq \xi_2.$  We then identify that $h(0)=0$ trivially and from the     inequalities in Lemma \ref{lemma:fkrangebounds}, we find that
\[
f_2(r) < \frac{ \pi r }{ 2 \sqrt{6}} < \frac{ \pi \sqrt{ 2r}}{ 8} < f_1(ir)
\] 
for  $0<r < 3/4$. Further, $f_2(i) > f_1(i)$
by Remark \ref{remark:f1(ir)}.  
Hence, by continuity
there exists $\beta \in  (3/4,1)$ such that $f_2( i \beta) = f_1( i \beta)$.

\end{proof}

\subsection{Generic Dominance of $f_k(z)$ on the Unit Disk}

The last major goal of this appendix is to show $f_1(z)$, $f_2(z)$, and $f_3(z)$ dominate  all the other $f_k(z)$  in the unit disk and determine where this dominance holds.  In particular we need to show
\begin{theorem}\,
[Theorem \ref{fkdomanancetheorem} in the main text]
For $0 < \vert z \vert \leq 1$, 
\begin{enumerate}
\item 
$ f_k(z)\le f_k(\vert z \vert)\le f_2(\vert z \vert) < f_1(z),$  $k\geq 2,$  $\vert\arg z\vert \le \pi/3,$
\item 
$ f_k(z)\le f_k(\vert z \vert) < f_1(z),$  $k\geq 3,$  $\vert\arg z\vert \le \pi/2$,
\item 
$ f_k(z) < f_1(z)$, $k \geq 2$, $0 \leq  \arg z \leq \pi/2$,
\item
$ f_k(z) < \max[  f_1(z), f_2(z), f_3(z)],$   $  k \geq 4$, $\pi/2 \leq \arg z \leq \pi$.
\end{enumerate}
\end{theorem}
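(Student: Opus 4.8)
The plan is to run everything off the identity $f_k(z)=\tfrac1k f_1(z^k)$ (immediate from Definition \ref{def:root_dilog}) together with two consequences of Proposition \ref{prop:decreasing}. First, since $t\mapsto f_1(re^{it})$ is even, $2\pi$-periodic and decreasing on $[0,\pi]$, one has $f_1(z)\le f_1(|z|)$ everywhere on $\overline{\mathbb D}$, hence $f_k(z)\le f_k(|z|)$ for every $k$. Second, $f_k(r)=\tfrac1k\sqrt{Li_2(r^k)}$ is a product of two functions of $k$ that are nonincreasing on $\{k\ge1\}$ for each fixed $r\in(0,1]$ (because $r^k$ decreases and $Li_2$ increases on $(0,1)$), so $k\mapsto f_k(r)$ is nonincreasing, strictly so above any fixed index; thus $f_k(|z|)\le f_2(|z|)$ for $k\ge2$ and $f_k(|z|)\le f_4(|z|)$ for $k\ge4$, with strict inequality once $k$ passes the comparison index. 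After this the whole statement is reduced to inequalities on at most two rays, to be closed with the bound $\tfrac{\pi^2}{12}|w|\le|Li_2(w)|\le\tfrac{\pi^2}{6}|w|$ of Lemma \ref{lemma:fkrangebounds} and the identity $f_1(w)=\tfrac1{\sqrt2}\sqrt{\Re Li_2(w)+|Li_2(w)|}$ from equation (\ref{eq:real_part_sq}).

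For (1): granting $f_k(z)\le f_k(|z|)\le f_2(|z|)$, Proposition \ref{prop:decreasing} turns the residual inequality $f_2(|z|)<f_1(z)$ (for $|\arg z|\le\pi/3$) into $f_2(r)<f_1(re^{i\pi/3})$ on $(0,1]$. The key point is $\Re Li_2(re^{i\pi/3})>0$: grouping $\sum_{n\ge1}r^n\cos(n\pi/3)/n^2$ along the period-$6$ coefficient pattern $(\tfrac12,-\tfrac12,-1,-\tfrac12,\tfrac12,1)$ of $\cos(n\pi/3)$, each negative‑coefficient term pairs with an earlier positive‑coefficient term dominating it, leaving only $-r^3/9-r^4/32$ uncompensated, and this is swamped by the pair $\tfrac12 r-\tfrac18 r^2\ge\tfrac38 r$. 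Then $f_1(re^{i\pi/3})^2\ge\tfrac12|Li_2(re^{i\pi/3})|\ge\tfrac{\pi^2}{24}r$, so $f_1(re^{i\pi/3})\ge\tfrac{\pi}{2\sqrt6}\sqrt r>\tfrac{\pi}{2\sqrt6}r\ge f_2(r)$ by Lemma \ref{lemma:fkrangebounds}, strictly (at $r=1$ because the modulus bound is strict off $[-1,0]$). This also gives (3) for $k=2$ with $|\arg z|\le\pi/3$.

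For (2): Proposition \ref{prop:decreasing} reduces $f_k(z)<f_1(z)$ (for $k\ge3$, $|\arg z|\le\pi/2$) to $f_k(r)<f_1(ir)$ on $(0,1]$; then $f_k(r)\le f_3(r)\le\tfrac{\pi}{3\sqrt6}r^{3/2}$ (opening reduction plus Lemma \ref{lemma:fkrangebounds}) while $f_1(ir)>\tfrac{\pi\sqrt2}{8}\sqrt r$ by Lemma \ref{lemma:f1(ir)_lower_bound}, and $\tfrac{\pi}{3\sqrt6}r^{3/2}<\tfrac{\pi\sqrt2}{8}\sqrt r$ holds on $(0,1]$ since it amounts to $r<6\sqrt3/8$. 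Clause (3) for $k\ge3$ is just (2) in the first quadrant; the leftover case $k=2$, $\pi/3\le\arg z\le\pi/2$, I would close by writing $f_2(z)=\tfrac12 f_1(z^2)$, using Proposition \ref{prop:decreasing} to get $f_2(z)\le\tfrac12 f_1(r^2e^{2i\pi/3})$ and $f_1(z)\ge f_1(ir)$, noting $\Re Li_2(r^2e^{2i\pi/3})<0$ on $(0,1]$ (same grouping idea, pattern $(-\tfrac12,-\tfrac12,1)$) so that $f_1(r^2e^{2i\pi/3})^2=\tfrac{(\Im Li_2)^2}{2(|Li_2|+|\Re Li_2|)}$, and bounding $|\Im Li_2(r^2e^{2i\pi/3})|\le\tfrac{\sqrt3}{2}Li_2(r^2)$ and $|Li_2(r^2e^{2i\pi/3})|\ge\tfrac{\pi^2}{12}r^2$ to reach $\tfrac12 f_1(r^2e^{2i\pi/3})\le\tfrac{\pi r}{4\sqrt2}<\tfrac{\pi\sqrt2}{8}\sqrt r<f_1(ir)$.

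Clause (4) is the main obstacle. The opening reduction gives $f_k(z)\le f_k(|z|)\le f_4(|z|)\le\tfrac{\pi}{4\sqrt6}|z|^2$ for $k\ge4$, and by the symmetry $f_k(\bar z)=f_k(z)$ it is enough to prove $f_4(|z|)\le\max(f_1(z),f_2(z),f_3(z))$ for $\arg z\in[\pi/2,\pi]$ (strictness for the actual $f_k$, $k\ge4$, then follows off the two axes from $f_k(z)<f_4(|z|)$, and on the axes by reducing to (2) at $z=i|z|$ and to $f_k(|z|)<f_2(|z|)=f_2(-|z|)$ at $z=-|z|$). I would split $[\pi/2,\pi]$ into three sub-arcs on which the dominant competitor is $f_1$ (next to $\pi/2$, pushing the estimate from (2) slightly past $\pi/2$ with the $\Re\sqrt\cdot$ identity), $f_3$ (a window about $\arg z=2\pi/3$, where $z^3$ lies near the positive axis so $f_3(z)=\tfrac13 f_1(z^3)$ stays near $f_3(|z|)$), and $f_2$ (next to $\pi$, where $z^2$ lies near the positive axis so $f_2(z)$ stays near $f_2(|z|)\ge f_4(|z|)$). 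On each sub-arc Proposition \ref{prop:decreasing} pushes the dominant function to its worst endpoint, and Lemma \ref{lemma:fkrangebounds} together with, at $|z|=1$, the closed form $Li_2(e^{i\phi})=\tfrac{\pi^2}{6}-\tfrac{\phi(2\pi-\phi)}{4}+i\,\mathrm{Cl}_2(\phi)$ (with $\mathrm{Cl}_2$ the Clausen function) supplies the lower bound. The delicate part is that the three estimates must overlap and the margin is slimmest near $\arg z\approx\tfrac34\pi$ on the unit circle, where $\max(f_1,f_2,f_3)$ exceeds $\tfrac{\pi}{4\sqrt6}$ only slightly, so the sub-arc endpoints and the constants (sharpened near that corner via the explicit values in Remark \ref{remark:f1(ir)}) have to be tuned accordingly, while everywhere else the inequalities have ample room.
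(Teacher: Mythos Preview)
Your reductions for parts (1)--(3) are essentially the paper's approach, just with a different verification that $\Re Li_2(re^{i\pi/3})>0$ (the paper uses Lewin's identity $\Re Li_2(re^{i\pi/3})=\tfrac16 Li_2(-r^3)-\tfrac12 Li_2(-r)$ rather than a series grouping). One small slip: in your (3), $k=2$ case, the chain $\tfrac12 f_1(r^2e^{2i\pi/3})\le\tfrac{\pi r}{4\sqrt2}<\tfrac{\pi\sqrt2}{8}\sqrt r$ collapses to an equality at $r=1$, so the unit circle needs a separate check. The paper handles this by the maximum modulus principle on the quarter disk, checking the arc $e^{it}$, $t\in[0,\pi/2]$, directly in two pieces.

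Part (4), however, has a genuine gap. Your reduction to $f_4(|z|)\le\max(f_1(z),f_2(z),f_3(z))$ is not merely delicate near $\arg z=3\pi/4$; it is \emph{false} there. At $z=e^{i3\pi/4}$ one has $f_4(1)=\tfrac{\pi}{4\sqrt6}\approx 0.321$, while $f_1(e^{i3\pi/4})\approx 0.298$, $f_2(e^{i3\pi/4})=\tfrac12 f_1(i)\approx 0.303$, and $f_3(e^{i3\pi/4})=\tfrac13 f_1(e^{i\pi/4})\approx 0.307$, so the maximum is about $0.307<0.321$. No tuning of sub-arcs or constants can rescue this, because the inequality you are aiming at simply does not hold on an arc near $3\pi/4$. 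The step $f_k(z)\le f_k(|z|)$ discards exactly the information that makes (4) true: for instance $f_4(e^{i3\pi/4})=\tfrac14 f_1(-1)=0$, not $f_4(1)$.

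The paper's approach is accordingly quite different for (4). It first uses your radial bound only to dispose of $k\ge 7$ on $[\pi/2,3\pi/4]$ (via $f_7(r)<f_3(ir)$) and $k\ge 5$ on $[3\pi/4,\pi]$ (via $f_5(r)<f_2(re^{i3\pi/4})$). The residual cases $k=4,5,6$ are then handled one at a time by the maximum modulus principle on sectors on which both $f_k$ and the comparison function $f_j$ ($j\in\{1,2,3\}$) are harmonic: one checks the two radial boundary segments and then the boundary arc of the unit circle, the latter via the explicit formula $Li_2(e^{it})=\tfrac{\pi^2}{6}-\tfrac{t(2\pi-t)}{4}-i\,Cl_2(t)$ together with secant and tangent bounds on $Cl_2$. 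This keeps track of the actual oscillation of $f_k(e^{it})$ rather than its envelope $f_k(1)$, which is what your reduction loses.
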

As  with the previous theorems, we will prove this using a series of lemmas and propositions.  Part (1) of this theorem is an immediate consequence of the next lemma. 

\begin{lemma}\label{lemma:bound2} 
The following are true:
\begin{enumerate}
\item Let $0<\vert z \vert<1$. If $\Re Li_2(z) >0$, then  $f_k(z)<f_2( \vert z \vert) < f_1(z)$ for $k \geq 2$.
\item For every $r\in (0,1)$ and $\theta \in (0,\pi/3),$ $\Re Li_2( r e^{i\theta}) >0.$ 
\end{enumerate}
\end{lemma}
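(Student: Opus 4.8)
The statement to prove is Lemma~\ref{lemma:bound2}. Here is how I would approach it.

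\medskip

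\textbf{Part (1).} The plan is to leverage the formula \eqref{eq:real_part_sq} for the real part of a square root. If $\Re Li_2(z)>0$, then for any $w$ with $\Re w \le 0$ and $|w| \le |Li_2(z)|$ (in particular for $w = Li_2(z^k)$ when $k\ge 2$, where I will argue $\Re Li_2(z^k)$ is small relative to $|z|^k$) one has $\Re\sqrt{w} = \tfrac{1}{\sqrt 2}\sqrt{\Re w + |w|} \le \tfrac{1}{\sqrt 2}\sqrt{|w|}$. Combining this with the modulus bound $|Li_2(z^k)| \le \tfrac{\pi^2}{6}|z|^k$ from Lemma~\ref{lemma:fkrangebounds}(1) gives $f_k(z) \le \tfrac{1}{k}\cdot\tfrac{1}{\sqrt 2}\sqrt{\tfrac{\pi^2}{6}|z|^k} = \tfrac{\pi}{k\sqrt{12}}|z|^{k/2}$. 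I then want to compare this to $f_2(|z|)$: since $f_2(|z|) = \tfrac12\sqrt{Li_2(|z|^2)} \ge \tfrac12\sqrt{\tfrac{\pi^2}{12}|z|^2} = \tfrac{\pi}{4\sqrt{3}}|z|$, a direct check that $\tfrac{\pi}{k\sqrt{12}}|z|^{k/2} \le \tfrac{\pi}{4\sqrt 3}|z| = \tfrac{\pi}{\sqrt{48}}|z|$ for $k\ge 2$ and $|z|\le 1$ finishes it (for $k=2$ one gets equality-type comparison and should be a little careful, using instead that $f_2(|z|)<f_1(z)$ directly when $\Re Li_2(z)>0$, exactly as in the $k\ge 2$ argument applied to $k=1$). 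The inequality $f_2(|z|)<f_1(z)$ itself: $f_1(z) = \Re\sqrt{Li_2(z)} = \tfrac{1}{\sqrt 2}\sqrt{\Re Li_2(z)+|Li_2(z)|} > \tfrac{1}{\sqrt 2}\sqrt{|Li_2(z)|} \ge \tfrac{1}{\sqrt2}\sqrt{|Li_2(|z|)|}\cdot(\text{something})$, using Lemma~\ref{lemma:moddecrease} to get $|Li_2(z)|\le |Li_2(|z|)|$ — wait, that is the wrong direction, so instead I compare $f_1(z)$ to $f_2(|z|)=\tfrac12\sqrt{Li_2(|z|^2)}$ by noting $Li_2(|z|^2) \le |z|^2 \cdot \tfrac{\pi^2}{6}$ hence $f_2(|z|)^2 \le \tfrac{\pi^2}{24}|z|^2$, while $f_1(z)^2 = \tfrac12(\Re Li_2(z)+|Li_2(z)|) \ge \tfrac12 |Li_2(z)| \ge \tfrac12\cdot\tfrac{\pi^2}{12}|z| \ge \tfrac{\pi^2}{24}|z|^2 \ge f_2(|z|)^2$, and the first inequality is strict because $\Re Li_2(z)>0$. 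That is the clean route.

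\medskip

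\textbf{Part (2).} I need $\Re Li_2(re^{i\theta})>0$ for $0<r<1$, $0<\theta<\pi/3$. The plan is to use the integral/series representation. Writing $Li_2(re^{i\theta}) = \sum_{n\ge 1} \tfrac{r^n}{n^2}\cos(n\theta) + i\sum_{n\ge 1}\tfrac{r^n}{n^2}\sin(n\theta)$, the real part is $\sum_{n\ge 1}\tfrac{r^n}{n^2}\cos(n\theta)$. This is not termwise positive, so I would instead use the known closed form or the Landen-type functional equation. The cleanest tool: $\Re Li_2(re^{i\theta})$ as a function is harmonic in the sector and I can check positivity on the boundary rays and use a minimum principle — on $\theta=0$ it is $Li_2(r)>0$, and I need to control the ray $\theta=\pi/3$. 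Actually the sharper and more honest approach is via the derivative: $\tfrac{\partial}{\partial\theta}\Re Li_2(re^{i\theta}) = -\Im Li_2(re^{i\theta})$ ... let me instead recall that $\Re Li_2(e^{i\theta}) = \tfrac{\pi^2}{6} - \tfrac{\pi\theta}{2} + \tfrac{\theta^2}{4}$ for $0\le\theta\le 2\pi$, which is positive for $\theta\in[0,\pi/3]$ (it equals $\tfrac{\pi^2}{6}$ at $0$ and decreases, reaching $\tfrac{\pi^2}{6}-\tfrac{\pi^2}{6}+\tfrac{\pi^2}{36}>0$ at $\theta=\pi/3$). For $r<1$ I then want monotonicity or a comparison in $r$: $\tfrac{\partial}{\partial r}\Re Li_2(re^{i\theta}) = \tfrac{1}{r}\Re(-\ln(1-re^{i\theta})) = -\tfrac{1}{r}\ln|1-re^{i\theta}|$, which is positive precisely when $|1-re^{i\theta}|<1$. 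For $|\theta|<\pi/3$, $|1-re^{i\theta}|^2 = 1 - 2r\cos\theta + r^2 < 1$ iff $r < 2\cos\theta$, and since $\cos\theta > 1/2$ for $|\theta|<\pi/3$ we get $2\cos\theta>1>r$, so indeed $\Re Li_2(re^{i\theta})$ is strictly increasing in $r$ on $(0,1)$. Hence $\Re Li_2(re^{i\theta}) > \lim_{r\to 0^+}\Re Li_2(re^{i\theta}) = 0$.

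\medskip

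\textbf{Main obstacle.} The delicate point is Part~(1) at the boundary case $k=2$ and the interface $\arg z = \pi/3$, where the crude modulus bounds become tight and the inequalities degrade to near-equalities; I expect to need the \emph{strict} positivity of $\Re Li_2(z)$ (from Part~(2), valid on the open sector $|\arg z|<\pi/3$) to keep $f_1(z)$ strictly above $f_2(|z|)$, together with a slightly more careful estimate of $\Re Li_2(z^k)$ than the trivial sign bound when $z^k$ still has positive real part. Everything else is routine interval arithmetic with the bounds already recorded in Lemma~\ref{lemma:fkrangebounds} and the monotonicity facts from Lemma~\ref{lemma:moddecrease} and Proposition~\ref{prop:decreasing}.
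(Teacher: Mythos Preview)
Your Part~(1) eventually lands on the same chain the paper uses: from $\Re Li_2(z)>0$ you get $f_1(z)^2=\tfrac12(\Re Li_2(z)+|Li_2(z)|)>\tfrac12|Li_2(z)|\ge \tfrac{\pi^2}{24}|z|\ge \tfrac{\pi^2}{24}|z|^2\ge \tfrac14 Li_2(|z|^2)=f_2(|z|)^2$, which is exactly the paper's argument. Your earlier exploration, however, has an unjustified step: there is no reason for $\Re Li_2(z^k)\le 0$, so the bound $\Re\sqrt{w}\le \tfrac1{\sqrt2}\sqrt{|w|}$ (and hence the constant $\tfrac{\pi}{k\sqrt{12}}$) is not available. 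This detour is unnecessary anyway, since the unconditional inequality $\Re\sqrt{w}\le \sqrt{|w|}$ already gives $f_k(z)\le \tfrac{\pi}{k\sqrt 6}|z|^{k/2}$ (Lemma~\ref{lemma:fkrangebounds}(2)); combined with $f_k(z)\le f_k(|z|)$ from Proposition~\ref{prop:decreasing}, the left inequality $f_k(z)\le f_2(|z|)$ is immediate. Your ``Main obstacle'' paragraph is therefore worrying about a nonissue: no delicate sign analysis of $\Re Li_2(z^k)$ is needed.

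Your Part~(2) is correct and takes a genuinely different route from the paper. The paper checks $\Re Li_2(re^{i\pi/3})>0$ via the closed-form identity $\Re Li_2(re^{i\pi/3})=\tfrac16 Li_2(-r^3)-\tfrac12 Li_2(-r)$ from Lewin, then invokes starlikeness (Lemma~\ref{lemma:argdecrease}) to propagate positivity to the open sector. You instead compute the radial derivative $\tfrac{\partial}{\partial r}\Re Li_2(re^{i\theta})=-\tfrac1r\ln|1-re^{i\theta}|$ and observe that $|1-re^{i\theta}|<1$ whenever $r<2\cos\theta$, which for $|\theta|<\pi/3$ covers all $r\in(0,1)$; hence $\Re Li_2(re^{i\theta})$ is increasing from $0$ in $r$. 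This is more self-contained (no external identity, no appeal to Lewis's starlikeness theorem) and in fact gives the sharp sector directly, since at $\theta=\pi/3$ one has $2\cos\theta=1$ and the monotonicity just barely persists on $(0,1)$.
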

\begin{proof}
We begin with the simple bound
\begin{align*}
f_1(z)
 &=
  \frac{1}{ \sqrt{2}} \sqrt{ \Re Li_2(z) + \vert Li_2(z) \vert } 
  >
   \frac{1}{\sqrt{2}} \sqrt{ \vert Li_2(z)\vert} 
   \geq
   \frac{1}{ \sqrt{2}} \sqrt{ -Li_2(-1) \vert z \vert }
   \\
   &=
   \frac{1}{ \sqrt{2}} \sqrt{ \frac{1}{2} Li_2(1)  \vert z \vert }
   =
   \frac{1}{2} \sqrt{ Li_2(1) \vert z \vert }
   \geq
   \frac{1}{2} \sqrt{ Li_2( \vert z \vert) }  \geq f_2(z) .
\end{align*}
To check that $\Re Li_2( r e^{i \pi/3}) >0$,
we use the identity that $\Re Li_2( r e^{i \pi/3}) = \frac{1}{6} Li_2( - r^3) - \frac{1}{2} Li_2( -r)$ \cite[p. 133]{lewin} and Lemma \ref{lemma:argdecrease}.
\end{proof}

By symmetry we need only consider the upper unit disk.  For convenience we divide the upper disk into three sectors:
$\arg(z) \in [0,\pi/2]$,  $\arg(z) \in [\pi/2,3\pi/4]$, and $\arg(z) \in [3\pi/4,\pi]$.

\begin{proposition}\label{prop:general_bounds}
For $0 < r \leq 1$, we  have the following bounds:
\\
(a)  $ f_3(r)   < f_1( ir)$,
\,
(b)
  $ f_5(r) < f_2(r e^{ i 3 \pi/4})$,
 \,
 (c)
  $ f_7(r) < f_3( ir)$.
\end{proposition}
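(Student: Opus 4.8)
The plan is to derive all three inequalities from two estimates already in hand: the crude upper bound $f_k(r)\le \frac{\pi}{k\sqrt6}\,r^{k/2}$ for $0<r\le 1$, which is Lemma~\ref{lemma:fkrangebounds}(2) at the real point $z=r$ (there $\mathrm{Li}_2(r^k)>0$, so $f_k(r)=\frac1k\sqrt{\mathrm{Li}_2(r^k)}$), and the lower bound $f_1(ir)>\frac{\pi\sqrt2}{8}\sqrt r$ of Lemma~\ref{lemma:f1(ir)_lower_bound}. First I would record that, by the explicit formula~\eqref{eq:real_part_sq}, $f_k(z)=\frac{1}{k\sqrt2}\sqrt{\Re\mathrm{Li}_2(z^k)+|\mathrm{Li}_2(z^k)|}$, so $f_k(z)$ depends on $z^k$ only through $\Re\mathrm{Li}_2(z^k)$ and $|\mathrm{Li}_2(z^k)|$, both invariant under $z^k\mapsto\overline{z^k}$. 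Since $(re^{i3\pi/4})^2=-ir^2$ and $(ir)^3=-ir^3$, this yields the rescaling identities
\[
f_2(re^{i3\pi/4})=\tfrac12\,f_1(ir^2),\qquad f_3(ir)=\tfrac13\,f_1(ir^3),
\]
so every right-hand quantity in (a)--(c) is a constant multiple of a value of $f_1$ on the imaginary axis, to which Lemma~\ref{lemma:f1(ir)_lower_bound} applies (at the arguments $r$, $r^2$, $r^3$ respectively).

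Then each of (a)--(c) becomes a single monotone power comparison on $(0,1]$. For (a): $f_3(r)\le\frac{\pi}{3\sqrt6}r^{3/2}$ and $f_1(ir)>\frac{\pi\sqrt2}{8}r^{1/2}$, so $f_3(r)/f_1(ir)\le\frac{4}{3\sqrt3}\,r\le\frac{4}{3\sqrt3}<1$ since $16<27$. For (b): $f_5(r)\le\frac{\pi}{5\sqrt6}r^{5/2}$ and $f_2(re^{i3\pi/4})=\frac12 f_1(ir^2)>\frac{\pi\sqrt2}{16}r$, so the ratio is at most $\frac{8}{5\sqrt3}\,r^{3/2}\le\frac{8}{5\sqrt3}<1$ since $64<75$. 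For (c): $f_7(r)\le\frac{\pi}{7\sqrt6}r^{7/2}$ and $f_3(ir)=\frac13 f_1(ir^3)>\frac{\pi\sqrt2}{24}r^{3/2}$, so the ratio is at most $\frac{12}{7\sqrt3}\,r^2\le\frac{12}{7\sqrt3}<1$ since $144<147$. In each case the remaining power of $r$ is $\le 1$ on $(0,1]$, so the worst case is $r=1$ and the three small-integer inequalities finish the argument.

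The only genuinely tight step is (c): the constant $12/(7\sqrt3)\approx 0.990$ only barely beats $1$, and the estimate survives solely because the full exponent $r^{7/2}$ is kept for $f_7(r)$ and the factor $\tfrac13$ in $f_3(ir)=\tfrac13 f_1(ir^3)$ is combined with the $\sqrt{r}$-type lower bound evaluated at $r^3$; replacing $r^{7/2}$ by $r$, or using a cruder lower bound for $f_3(ir)$, would overshoot. Parts (a) and (b) carry comfortable margins, so beyond the two reduction identities the whole proof is bookkeeping: tracking exponents and verifying $16<27$, $64<75$, $144<147$.
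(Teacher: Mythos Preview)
Your proof is correct and follows essentially the same route as the paper: the same rescaling identities $f_2(re^{i3\pi/4})=\tfrac12 f_1(ir^2)$ and $f_3(ir)=\tfrac13 f_1(ir^3)$, the same upper bound from Lemma~\ref{lemma:fkrangebounds}(2), and the same lower bound from Lemma~\ref{lemma:f1(ir)_lower_bound}, leading to exactly the three power comparisons you verify. Your explicit justification of the rescaling via conjugate symmetry and your check of the constants $16<27$, $64<75$, $144<147$ make the argument slightly more detailed than the paper's, but the substance is identical.
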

\begin{proof}
Note that $f_2(r e^{ i 3 \pi/4}) = f_1(  i r^2  )/2$ and $f_3( ir) = f_1(ir)/3$.
In other words, the desired inequalities reduce to the following easily to verify bounds which follow from Lemma \ref{lemma:f1(ir)_lower_bound} and Equation  (\ref{eq:bound1}):
\begin{align*}
f_3(r) \leq \frac{\pi}{3 \sqrt{6} } r^{3/2} & < \frac{\pi  \sqrt{2}} { 8}r^{1/2} \leq f_1(ir) ,
\\
f_5(r) \leq \frac{\pi}{5\sqrt{6} } r^{5/2} & < \frac{ \pi  \sqrt{2}}{  16} r \leq f_2( re^{i 3 \pi/4}) ,
\\
f_7(r) \leq \frac{ \pi}{ 7 \sqrt{6} } r^{7/2} &< \frac{ \pi   \sqrt{2}}{  24} r^{3/2} \leq f_3( i r) .
\end{align*}
\end{proof}

By the above Proposition  
together with Proposition \ref{prop:decreasing} and  the bounds $f_k(z) \leq f_k( \vert z \vert) \leq f_j(\vert z \vert)$,
 for $j \leq k$,
we can extend the bounds on radial lines to the three  above designated sectors inside the unit disk.  

\begin{corollary}
Let $0<\vert z \vert \leq 1$.
\\
(1)
For $k \geq 3$,
$f_k (z) \leq  f_k( \vert z \vert) < f_1(z)$, $0 \leq \arg(z) \leq \pi/2$.
\\
(2)
For $k \geq 5$, $f_k(z) \leq f_k( \vert z \vert) <  f_2(z)$, $3 \pi /4 \leq \arg(z) \leq \pi$.
\\
(3)
For $k \geq 7$, $f_k(z) \leq f_k( \vert z \vert) <  f_3(z)$, $\pi/2 \leq \arg(z) \leq 3 \pi/4$.
\end{corollary}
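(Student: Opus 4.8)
The plan is to obtain all three parts from the single two-step reduction announced just above the statement. First I would record the two elementary monotonicities that drive everything. Writing $f_k(z)=\tfrac1k f_1(z^k)$ (both square roots taken with nonnegative real part) and noting that $\psi\mapsto f_1(\rho e^{i\psi})$ is $2\pi$-periodic, even, and, by Proposition~\ref{prop:decreasing}, decreasing on $[0,\pi]$, its maximum over all $\psi$ is attained at $\psi=0$; hence $f_1(z^k)\le f_1(|z|^k)$, that is $f_k(z)\le f_k(|z|)$ for $0<|z|\le1$. Second, for $0<r\le1$ and $j\le k$ one has $r^k\le r^j$, so $\mathrm{Li}_2(r^k)\le\mathrm{Li}_2(r^j)$ with both sides positive, and therefore $f_k(r)=\tfrac1k\sqrt{\mathrm{Li}_2(r^k)}\le\tfrac1j\sqrt{\mathrm{Li}_2(r^j)}=f_j(r)$. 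Combining, $f_k(z)\le f_k(|z|)\le f_j(|z|)$ whenever $j\le k$; this already supplies the left two terms of each displayed inequality, and it remains only to beat $f_j(|z|)$ by the dominating function on the relevant sector, with $j=3,5,7$ respectively.

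For part (1), with $k\ge3$, the chain gives $f_k(z)\le f_k(|z|)\le f_3(|z|)$; for $\arg z\in[0,\pi/2]$, Proposition~\ref{prop:decreasing} yields $f_1(z)\ge f_1(i|z|)$, and Proposition~\ref{prop:general_bounds}(a) gives $f_1(i|z|)>f_3(|z|)$, so $f_k(|z|)<f_1(z)$. For part (2), with $k\ge5$, we have $f_k(z)\le f_k(|z|)\le f_5(|z|)$; for $\arg z\in[3\pi/4,\pi]$ the point $z^2$ has argument $2\arg z\in[3\pi/2,2\pi]$, which reduces modulo $2\pi$ into $[-\pi/2,0]$, so by the evenness of $f_1$ and its decrease on $[0,\pi/2]$ we get $f_2(z)=\tfrac12 f_1(z^2)\ge\tfrac12 f_1(i|z|^2)=f_2(|z|e^{i3\pi/4})$, which by Proposition~\ref{prop:general_bounds}(b) exceeds $f_5(|z|)$. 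For part (3), with $k\ge7$, we have $f_k(z)\le f_k(|z|)\le f_7(|z|)$; for $\arg z\in[\pi/2,3\pi/4]$ the point $z^3$ has argument $3\arg z\in[3\pi/2,9\pi/4]$, which reduces modulo $2\pi$ into $[-\pi/2,\pi/4]\subset[-\pi/2,\pi/2]$, so similarly $f_3(z)=\tfrac13 f_1(z^3)\ge\tfrac13 f_1(i|z|^3)=f_3(i|z|)$, and Proposition~\ref{prop:general_bounds}(c) gives $f_3(i|z|)>f_7(|z|)$. In each case the strict inequality coming from Proposition~\ref{prop:general_bounds} turns the middle $\le$ into the required $<$.

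The one point that needs care is the argument bookkeeping in parts (2) and (3): I must verify that, as $\arg z$ runs over the designated sector, the reduced argument of $z^m$ (with $m=1,2,3$ the index of the dominating function there) stays inside the arc $\{|\psi|\le\pi/2\}$, since that is exactly the arc on which $f_1(\rho e^{i\psi})\ge f_1(i\rho)$, and $i|z|^m$ is the point to which Proposition~\ref{prop:general_bounds} applies. This is precisely why the disk is cut at $\arg z=\pi/2$ and $3\pi/4$: the identity map fixes $[0,\pi/2]$, squaring sends $[3\pi/4,\pi]$ into $[3\pi/2,2\pi]\equiv[-\pi/2,0]$, and cubing sends $[\pi/2,3\pi/4]$ into $[3\pi/2,9\pi/4]\equiv[-\pi/2,\pi/4]$. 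Once these inclusions are in hand, everything reduces mechanically to Propositions~\ref{prop:decreasing} and~\ref{prop:general_bounds}; there is no genuine analytic difficulty here, the substantive work having already been carried out in establishing Proposition~\ref{prop:general_bounds} and the lower bound of Lemma~\ref{lemma:f1(ir)_lower_bound}.
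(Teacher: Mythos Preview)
Your argument is correct and is exactly the route the paper indicates: the paper states the corollary without proof, noting only that it follows from Proposition~\ref{prop:general_bounds} together with Proposition~\ref{prop:decreasing} and the chain $f_k(z)\le f_k(|z|)\le f_j(|z|)$ for $j\le k$. You have simply filled in the argument bookkeeping (checking that $z^m$ lands in the arc $|\psi|\le\pi/2$ so that the minimum of $f_m$ on the sector is attained at the radial line used in Proposition~\ref{prop:general_bounds}), which the paper leaves implicit.
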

\noindent Part (2) of Theorem \ref{fkdomanancetheorem} is part (1) of this corollary.

To sum up, we know that, for $0<\vert z \vert\leq 1$, $f_k(z) < f_1(z)$
with $\arg(z) \in [0, \pi/2]$, $k\geq 3$ and $f_k(z) < \max( f_2(z),f_3(z))$ with $\arg(z) \in [\pi/2,\pi]$, $k\geq 7$.
In order to complete the proof of parts (3) and (4) of Theorem  \ref{fkdomanancetheorem}, we
  must  establish  the five remaining
cases which  requires working throught the remaining   three special sectors.

\subsection{Refining the Dominance of $f_k(z)$ on the Open Unit Disk}

We now show  for $k\geq 4,$ $f_k(z) < \max \left(f_1(z),f_2(z),f_3(z)\right)$ for $0<\vert z \vert \leq 1$.
The maximum modulus principle applies in  these exceptional cases since the functions $f_k$ are harmonic
on the appropriate sector $S$ in the unit disk.
 In particular, we need only to check dominance on the two boundary  radial lines,
say $\arg(z) = \alpha$ or $\beta$ and on the boundary arc of the unit circle $e^{ i t }$, 
$\alpha \leq t \leq \beta$.
On the unit circle, $Li_2(e^{it})$ has an explicit representation:
\[
 Li_2( e^{it}) = r(t) - i Cl_2(t)
 \]
 where
\[
r(t) = \frac{\pi^2}{6}  - \frac{ t ( 2\pi -t)}{4}, 
\quad
 Cl_2(t) = \int_0^t \ln [ 2 \sin ( s/2)] \, ds 
\]
(see \cite[p. 101]{lewin}). $Cl_2(t)$ is called the Clausen integral
and has special values $Cl_2(\pi/2)=-G$, the Catalan constant, and $Cl_2(\pi)=0$.
  By construction, $\Im Li_2( e^{ it}) = -  Cl_2(t)$ is a nonnegative concave decreasing function
on $[\pi/2,\pi]$ so  its secant line 
\[
s(t)=2G ( \pi -t)/\pi
\]
 gives a  lower bound on $[\pi/2,\pi]$.  The tangent line $L(t)$ at $s=\pi/2$ yields an upper bound
 where $L(t) = s/2+ (1/2) \ln(2) - \pi/4$.
Given the explicit form of the real part of the square root (equation (\ref{eq:real_part_sq})),
we have a lower bound for $f_1( e^{it})$ on $[\pi/2 , \pi]$:
\[
f_1( e^{it}) \geq \frac{1}{ \sqrt{2}} 
\sqrt{ r(t) + \sqrt{ r^2(t) + s^2(t)}} .
\]

We obtain the following inequalites by direct calculation using the secant  lower bound and the
exact forms of $f_1(1)$ and $f_1(i)$:
\begin{equation}\label{eq:direct_inequalities}
f_1(i)/4 < f_1( e^{ i 3 \pi/4}), 
\quad
f_1(1)/4 < f_1( e^{ i 2 \pi/3}),
\quad 
f_1( e^{ i 2 \pi/3})/5 < f_1( e^{ i 3 \pi/4})
\end{equation}
as well as 
\begin{equation}\label{eq:prop15}
f_1( e^{ i 2 \pi/3}) /5 < f_1( e^{ i 4 \pi/5}) 
\end{equation}
needed in the proofs of Propositions \ref{prop:13} and \ref{prop:15} below.
Further, using the lower bound $\ln[2\sin(\pi/4)]$ for $\ln[2\sin(s/2)]$ on $[\pi/2,2\pi/3]$,
we obtain
\begin{align*}
-Cl_2( 2 \pi/3) 
&=
 - \int_0^{\pi/2} \ln [ 2\sin(s/2) ] \, ds -  \int_{\pi/2}^{2\pi/3} \ln [ 2\sin(s/2) ] \, ds
\\
&= 
G - \int_{\pi/2}^{2\pi/3} \ln [ 2\sin(s/2) ] \, ds < G - \ln[ 2\sin( \pi/4)](2\pi/3-\pi/2)
\\
&=
G -  \frac{\pi \ln 2}{12} .
\end{align*}
Substituting, we find that
equation (\ref{eq:prop15}) now follows.

Note that  that $f_1( e^{ i 2 \pi/3}) < f_3( e^{ i 2 \pi/3}) = f_1(1)/3$ by the tangent line upper bound
$L(t)$
for $-Cl_2( 2 \pi/3)$ on $[\pi/2,2\pi/3]$.  Since $f_2( e^{ i 2 \pi/3}) = f_1( e^{ i 2 \pi/3})/2$, we find that
\[
f_2( e^{ i 2 \pi/3})<f_1( e^{ i 2 \pi/3})< f_3( e^{ i 2 \pi/3}),
\]
so all three functions $f_1(z), f_2(z)$, and $f_3(z)$ occur in describing phases.

We  will also make use of  the symmetry relation  $f_1( e^{ i ( \pi +t ) }) = f_1( e^{ i (\pi - t)})$ on $[0,\pi]$.

\begin{lemma}
For $0<r\leq 1$, $ f_3( r e^{ i 5 \pi/6}) < f_2( r^{ i 5 \pi/6})$.
\end{lemma}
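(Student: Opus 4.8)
The plan is to express both sides in terms of $f_1$ alone and then invoke the two monotonicity facts already proved for $f_1$ (reading the right‑hand side of the statement as $f_2(re^{i5\pi/6})$). Write $z=re^{i5\pi/6}$. A direct computation of powers gives $z^3=r^3e^{i5\pi/2}=ir^3$ and $z^2=r^2e^{i5\pi/3}=r^2e^{-i\pi/3}$; since each $f_k$ is symmetric about the real axis (the real part of a square root of $\overline{Li_2(w)}$ equals that of $\sqrt{Li_2(w)}$), this yields
\[
f_3\!\left(re^{i5\pi/6}\right)=\tfrac13\,f_1\!\left(ir^3\right),\qquad f_2\!\left(re^{i5\pi/6}\right)=\tfrac12\,f_1\!\left(r^2e^{i\pi/3}\right).
\]
So it suffices to show $\tfrac13 f_1(ir^3)<\tfrac12 f_1(r^2e^{i\pi/3})$ for $0<r\le 1$.

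Next I would establish the chain $f_1(ir^3)\le f_1(ir^2)\le f_1(r^2e^{i\pi/3})$. The first inequality follows from $r^3\le r^2\le 1$ together with Proposition \ref{f1calculus}, which says $t\mapsto f_1(it)$ is increasing on $(0,1)$ (the case $r=1$ being trivial). The second inequality follows from Proposition \ref{prop:decreasing}: with the radius fixed at $r^2\in(0,1]$, the map $t\mapsto f_1(r^2e^{it})$ is decreasing on $[0,\pi]$, and $\pi/2>\pi/3$.

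Finally, combining the chain with the observation $f_1(r^2e^{i\pi/3})>0$ (true because $r^2e^{i\pi/3}$ never lies in $[-1,0]$, the zero set of $f_1$), we obtain
\[
f_3\!\left(re^{i5\pi/6}\right)=\tfrac13 f_1(ir^3)\le \tfrac13 f_1\!\left(r^2e^{i\pi/3}\right)<\tfrac12 f_1\!\left(r^2e^{i\pi/3}\right)=f_2\!\left(re^{i5\pi/6}\right).
\]
I do not expect any real obstacle here; the only point to watch is that the strictness comes solely from the coefficient $\tfrac13<\tfrac12$ multiplying the positive quantity $f_1(r^2e^{i\pi/3})$, so the two monotonicity steps may be used merely as non‑strict inequalities, and the reduction to $f_1$ via the power computations is purely bookkeeping.
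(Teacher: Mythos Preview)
Your argument is correct and follows essentially the same route as the paper: reduce both sides to values of $f_1$, then invoke the radial monotonicity of $t\mapsto f_1(it)$ together with the angular monotonicity of $t\mapsto f_1(re^{it})$, with strictness coming from the factor $\tfrac13<\tfrac12$. The only cosmetic difference is that the paper passes through the intermediate point $re^{i3\pi/4}$ (using $f_2(re^{i3\pi/4})=\tfrac12 f_1(ir^2)$ and that $f_2(re^{it})$ is increasing on $[\pi/2,\pi]$), whereas you compute $f_2(re^{i5\pi/6})=\tfrac12 f_1(r^2e^{i\pi/3})$ directly; the two are equivalent once unwound.
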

\begin{proof}
We record that  $f_3( r e^{ i 5 \pi/6}) = f_1(  i r^3)/3$ and $f_2( r e^{ i 3 \pi/4}) = f_1( i r^2)/2$.
Since $f_2( r e^{ it})$ is increasing for $t \in [\pi/2,\pi]$ and $f_1(ir)$ is also increasing, the result follows.
\end{proof}

\subsubsection{Dominance of $f_k(z)$ on the Quarter Disk $\arg(z) \in [0, \pi/2]$}
On the quarter disk, we already know $f_k(z)\leq \max \left(f_1(z),f_2(z)\right)$
with $k \geq 3$.  We must reduce this to show $f_2(z) < f_1(z)$.  This also concludes part (3) in Theorem \ref{fkdomanancetheorem}.

\begin{proposition}
\label{f1-f2-first-quadrant}
For $0< \vert z \vert \leq 1$ and $0 \leq \arg(z) \leq \pi/2$, 
\[
f_2(z) < f_1(z) .
\]
\end{proposition}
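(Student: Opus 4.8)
The plan is to apply the minimum principle for harmonic functions on the open quarter disk $S=\{z:0<|z|<1,\ 0<\arg z<\pi/2\}$ to the function $h=f_1-f_2$. The engine of the whole argument is the identity $f_2(z)=\tfrac12 f_1(z^2)$, immediate from Definition~\ref{def:root_dilog}. On $S$ the function $f_1$ is harmonic because $\sqrt{\mathrm{Li}_2(z)}$ is analytic off $[-1,0]$, and $f_2$ is harmonic on $S$ because the branch locus of $\sqrt{\mathrm{Li}_2(z^2)}$ is the imaginary axis, which lies in $\partial S$; moreover $h$ extends continuously to $\overline S$, with $h(0)=0$ by the bound $f_k(z)=O(|z|^{k/2})$ of Lemma~\ref{lemma:fkrangebounds}(2), and $f_2\equiv 0$ on the imaginary axis (where $\mathrm{Li}_2(-r^2)<0$, so $\Re\sqrt{\mathrm{Li}_2(-r^2)}=0$). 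Thus it suffices to check $h\ge 0$ on $\partial S$ and then to upgrade to a strict interior inequality.

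I would next verify $h\ge0$ on the three boundary pieces. On the arc $z=e^{it}$, $0\le t\le\pi/2$: since $f_2(e^{it})=\tfrac12 f_1(e^{2it})$ and $2t\ge t$ with $2t\in[0,\pi]$, Proposition~\ref{prop:decreasing} gives $f_1(e^{2it})\le f_1(e^{it})$, so $f_2(e^{it})\le\tfrac12 f_1(e^{it})\le f_1(e^{it})$ using $f_1\ge 0$. On the radius $\arg z=\pi/2$: $f_2(ir)=0$ as noted, while $f_1(ir)>0$ on $(0,1]$ by Proposition~\ref{f1calculus}. On the radius $\arg z=0$: $f_2(r)=\tfrac12\sqrt{\mathrm{Li}_2(r^2)}\le\tfrac12\sqrt{\mathrm{Li}_2(r)}<\sqrt{\mathrm{Li}_2(r)}=f_1(r)$, since $\mathrm{Li}_2$ is positive and increasing on $(0,1)$. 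Together with $h(0)=0$, this gives $\min_{\partial S}h=0$, hence $h\ge0$ on $\overline S$ by the minimum principle. Since $h\not\equiv0$ (for instance $h(1)=\tfrac12 f_1(1)=\pi/(2\sqrt6)>0$), the strong minimum principle forbids $h$ from attaining the value $0$ at an interior point, so $h>0$ on $S$. As the boundary estimates above were in fact strict for $0<|z|\le1$ (using $f_1(e^{it})>0$ for $t\in[0,\pi/2]$, $f_1(ir)>0$, and $\mathrm{Li}_2(r)>0$ respectively), we conclude $f_2(z)<f_1(z)$ on the whole region $0<|z|\le1$, $0\le\arg z\le\pi/2$.

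The main obstacle is not analytic depth but the careful bookkeeping of branch cuts and boundary continuity: one must confirm that the cuts of $\sqrt{\mathrm{Li}_2(z)}$ (namely $[-1,0]$) and of $\sqrt{\mathrm{Li}_2(z^2)}$ (namely $i[-1,1]$) meet $\overline S$ only along $\partial S$, so that $h$ really is harmonic on the open quarter disk, and that $h$ is continuous up to $\partial S$ including across the cut of $f_2$ on the imaginary axis and at the corner $z=0$. Once these points are secured, each boundary inequality is a one-line consequence of Proposition~\ref{prop:decreasing}, Proposition~\ref{f1calculus}, and the monotonicity of the dilogarithm on $(0,1)$. Alternatively, for $|\arg z|\le\pi/3$ one may bypass the maximum principle entirely and quote Lemma~\ref{lemma:bound2} directly, leaving only the sector $\pi/3\le\arg z\le\pi/2$ for the harmonic-function argument.
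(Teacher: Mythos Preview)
Your proof is correct and follows the same overall strategy as the paper: both apply the maximum/minimum principle for harmonic functions on the open quarter disk and verify the inequality on the three boundary pieces (the two radii and the arc).  The one genuine difference is your treatment of the arc: the paper splits $[0,\pi/2]$ into $[0,\pi/4]$ and $[\pi/4,\pi/2]$ and invokes Lemma~\ref{lemma:bound2} on the first subarc, whereas you use the identity $f_2(e^{it})=\tfrac12 f_1(e^{2it})$ together with Proposition~\ref{prop:decreasing} to get $f_2(e^{it})\le\tfrac12 f_1(e^{it})<f_1(e^{it})$ in one step.  Your argument is shorter and avoids the dependence on Lemma~\ref{lemma:bound2}; you are also more careful than the paper in explicitly invoking the \emph{strong} minimum principle to pass from $h\ge0$ on $\partial S$ to $h>0$ in the interior, and in treating the continuity of $h$ across the branch cut of $f_2$ on the imaginary axis and at the corner $z=0$.
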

\begin{proof} 
Since both $f_1$ and $f_2$ are harmonic 
on the open quarter unit
disk,
we can use the maximum modulus principle. 
On the radial boundary lines, $f_2(r) < f_1(r)$ and $0 = f_2(ir) < f_1(ir)$, $r \in (0,1]$.
On the arc $e^{it}$, $t \in [0,\pi/2]$, 
  both $f_1( e^{it} )$ and $f_2( e^{it} )$ are decreasing functions by Proposition \ref{prop:decreasing}. 
On $t\in [0,\pi/4],$
\[
f_2(e^{it}) \le f_2(1) \le f_1(e^{i  \pi/4  }) < f_1(e^{it}).
\]
 as $f_2(1) < f_1( e^{i  \pi/4})$ by Lemma \ref{lemma:bound2}.
For $t\in [\pi/4,\pi/2]$:
\[
f_2(e^{it}) \leq f_2(e^{i {\pi}/{4}})=\frac{1}{2}f_1(i) \le f_1(i) < f_1(e^{it}).
\]
\end{proof}

\subsubsection{Dominance of $f_k(z)$ on the sector $\arg(z) \in [3\pi/4,\pi]$} On this sector we have for $k\geq 5,$
  \[
  \max \left(f_1(z),f_2(z),f_3(z), f_4(z)\right)>f_k(z).
  \]
   To refine this result, we show $f_4(z) < f_2(z).$

\begin{proposition}
On the sector $\arg z \in [3 \pi/4,\pi]$ and $0< \vert z \vert\leq 1$, 
$f_4(z) < f_2(z)$.
\end{proposition}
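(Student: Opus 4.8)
The plan is to avoid any new harmonic‑function estimates and instead reduce this inequality to Proposition~\ref{f1-f2-first-quadrant}, which has just been proved, via an elementary ``doubling'' identity for the root dilogarithm. Because the real part of a square root is $\Re\sqrt{w}=\frac{1}{\sqrt 2}\sqrt{\Re w+|w|}$ (equation~(\ref{eq:real_part_sq})), the quantity $f_k(z)$ depends on its argument only through $z^k$, and comparing the defining formula of $f_{2k}$ with that of $f_k$ evaluated at $z^2$ gives, for every $k\ge 1$ and $z\neq 0$,
\[
f_{2k}(z)=\frac{1}{2}\,f_k(z^2).
\]
In particular $f_4(z)=\frac{1}{2}f_2(z^2)$ and $f_2(z)=\frac{1}{2}f_1(z^2)$, so the assertion $f_4(z)<f_2(z)$ is \emph{equivalent} to $f_2(z^2)<f_1(z^2)$.

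Next I would track arguments. If $\arg z\in[3\pi/4,\pi]$ and $0<|z|\le 1$, then $w:=z^2$ satisfies $0<|w|\le 1$ and $\arg w=2\arg z-2\pi\in[-\pi/2,0]$. Since every $f_k$ is symmetric about the real axis, $f_j(w)=f_j(\overline w)$ for $j=1,2$, and $\overline w$ lies in the closed first quadrant of the unit disk, i.e. $0<|\overline w|\le 1$ and $\arg\overline w\in[0,\pi/2]$. Proposition~\ref{f1-f2-first-quadrant} then yields $f_2(\overline w)<f_1(\overline w)$, hence $f_2(z^2)<f_1(z^2)$, hence $f_4(z)<f_2(z)$, which is the claim.

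I do not expect a genuine obstacle here: once the identity $f_{2k}(z)=\frac{1}{2}f_k(z^2)$ and the reflection symmetry are in hand, the proposition is a corollary of the first‑quadrant comparison $f_2<f_1$. The only point that needs a moment's care is the endpoint bookkeeping — at $\arg z=3\pi/4$ one has $w=-i|z|^2$ so $\overline w=i|z|^2$ with $\arg\overline w=\pi/2$, and at $\arg z=\pi$ one has $w=\overline w=|z|^2>0$ with $\arg\overline w=0$ — and both endpoints are included in the closed‑sector hypothesis of Proposition~\ref{f1-f2-first-quadrant}, so no separate argument is required.
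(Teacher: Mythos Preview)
Your argument is correct and is genuinely different from the paper's proof. The identity $f_{2k}(z)=\tfrac12 f_k(z^2)$ is immediate from Definition~\ref{def:root_dilog}, the reflection $f_k(\bar z)=f_k(z)$ is stated right after that definition, and the range $\arg z\in[3\pi/4,\pi]$ maps under $z\mapsto z^2$ (after conjugation) exactly onto the closed first quadrant, where Proposition~\ref{f1-f2-first-quadrant} has already established $f_2<f_1$ strictly, including at the two endpoints you check. So the chain $f_4(z)=\tfrac12 f_2(z^2)<\tfrac12 f_1(z^2)=f_2(z)$ goes through without further input.

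By contrast, the paper argues directly via the maximum principle on the sector $[3\pi/4,\pi]$: it verifies $f_4<f_2$ on the two radial edges and then on the boundary arc $e^{it}$, $t\in[3\pi/4,\pi]$, splitting the arc at $5\pi/6$ and invoking Proposition~\ref{prop:decreasing} and Lemma~\ref{lemma:bound2} for the needed numerical comparisons. Your route is shorter and conceptually cleaner---it leverages the halving symmetry to recycle the first-quadrant result rather than redo a boundary analysis---while the paper's approach is more uniform with the adjacent propositions (the same maximum-principle template is used throughout the appendix, even where no such shortcut is available).
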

\begin{proof}
The sector $S$ has boundary lines with $\alpha=3 \pi/4$ and $\beta= \pi$.  When $\arg(z)=3\pi/4$, $f_4(z)=0$
while $f_2( r e^{ i 3 \pi/4}) = f_1( i r^2)/2 >0$ when $0<r\leq 1$. For $\beta= \pi$, 
$0<f_4(-r)=f_4(r) < f_2( -r)=f_2(r)$,
$0<r \leq 1$. On the arc $e^{it}$, $3\pi/4 \leq t \leq \pi$,  both $f_2( e^{it})$ and $f_4( e^{it})$ are increasing by Propositon \ref{prop:decreasing}. We check the chain of inequalities
\[
f_4( e^{ i 5 \pi/6}) = f_1( e^{ i 2 \pi/3})/4 <   f_2( e^{ i 3 \pi/4}) =f_1( i )/2
\]
by Propositon \ref{prop:decreasing}
and 
\[
f_4(1) = (1/2) ( f_1(1)/2)  < f_1( e^{ i  \pi/3  })/2 =  f_2( e^{ i 5 \pi/6}) 
\]
by Lemma \ref{lemma:bound2}.
Thus
$$
f_4(e^{it}) < f_4(1) < f_2(e^{i\frac{5\pi}{6}}) < f_2(e^{i\frac{3\pi}{4}}) <f_2(e^{it}).
$$
\end{proof}

\subsubsection{Dominance of $f_k(z)$ on the sector  $\arg(z) \in [\pi/2,3\pi/4]$   }
As in the previous two sections, we have to reduce the number of relavent $f_k(z).$  Things are a bit more fine tuned in this section however. There are three cases.

\begin{proposition}\label{prop:13}
On the sector with $ \arg z  \in [ \pi/2 , 3 \pi/4]$, 
$f_ 4(z) < f_1(z)$, $z \neq 0$.
\end{proposition}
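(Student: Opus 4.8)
The plan is to argue by the maximum principle for harmonic functions, exactly as in the two preceding propositions. On the open sector $S=\{z:\pi/2<\arg z<3\pi/4,\ 0<|z|<1\}$ the map $z\mapsto z^{4}$ carries $S$ into the open upper half of the punctured unit disk, where $Li_{2}$ is zero-free and avoids the cut $(-\infty,0]$, so $\sqrt{Li_{2}(z^{4})}$ is analytic and $f_{4}$ is harmonic on $S$; likewise $f_{1}$ is harmonic on $S$. Both functions are continuous on $\overline S$ with $f_{1}(0)=f_{4}(0)=0$, so $f_{4}-f_{1}$ attains its supremum over $\overline S$ on the boundary, which consists of the two radial segments $\arg z=\pi/2$ and $\arg z=3\pi/4$, the arc $e^{it}$ with $t\in[\pi/2,3\pi/4]$, and the point $z=0$. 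Hence it suffices to check $f_{4}(z)<f_{1}(z)$ on the two radial segments and on the arc (for $z\neq0$): the supremum of $f_{4}-f_{1}$ is then $0$, attained only at the origin, and the strong maximum principle prevents it being attained at any interior point, yielding strict inequality throughout $S$.

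On the ray $\arg z=\pi/2$, writing $z=ir$ we have $z^{4}=r^{4}$, so $f_{4}(ir)=\tfrac14\sqrt{Li_{2}(r^{4})}\le\tfrac{\pi}{4\sqrt6}\,r^{2}$ by Lemma \ref{lemma:fkrangebounds}, while $f_{1}(ir)>\tfrac{\pi\sqrt2}{8}\sqrt r$ by Lemma \ref{lemma:f1(ir)_lower_bound}; the required inequality reduces to $r^{3/2}<\sqrt3$, which holds for all $r\in(0,1]$. On the ray $\arg z=3\pi/4$ we have $z^{4}=-r^{4}\in(-1,0]$, so $Li_{2}(z^{4})$ is real and non-positive, $\sqrt{Li_{2}(z^{4})}$ is purely imaginary, and $f_{4}(z)=0<f_{1}(z)$, the latter because $re^{3\pi i/4}\notin[-1,0]$.

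The arc is where the work lies. Using $Li_{2}(e^{4it})=Li_{2}(e^{i(4t-2\pi)})$ we get $f_{4}(e^{it})=\tfrac14 f_{1}(e^{i(4t-2\pi)})$ with $4t-2\pi\in[0,\pi]$, and $s\mapsto f_{1}(e^{is})$ is decreasing on $[0,\pi]$ by Proposition \ref{prop:decreasing}. I split the arc at $t=2\pi/3$. For $t\in[2\pi/3,3\pi/4]$ one has $4t-2\pi\ge t$, so monotonicity gives $f_{1}(e^{i(4t-2\pi)})\le f_{1}(e^{it})$ and hence $f_{4}(e^{it})\le\tfrac14 f_{1}(e^{it})<f_{1}(e^{it})$. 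For $t\in[\pi/2,2\pi/3]$ monotonicity instead gives $f_{4}(e^{it})\le\tfrac14 f_{1}(1)$ and $f_{1}(e^{it})\ge f_{1}(e^{2\pi i/3})$, so it remains only to quote the inequality $\tfrac14 f_{1}(1)<f_{1}(e^{2\pi i/3})$ recorded in equation (\ref{eq:direct_inequalities}).

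I expect the arc estimate to be the only genuine obstacle: a crude comparison of the extreme values of $f_{4}$ and $f_{1}$ over the arc does \emph{not} suffice, since $f_{1}(1)/4$ slightly exceeds $f_{1}(e^{3\pi i/4})$; the point of the argument is the substitution $4t-2\pi\ge t$ on the upper half of the arc, which lets the monotonicity of $f_{1}(e^{is})$ close the gap there and confines the purely numerical input to the single inequality $f_{1}(1)/4<f_{1}(e^{2\pi i/3})$.
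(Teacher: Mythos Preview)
Your proof is correct and follows the same overall scheme as the paper: apply the maximum principle on the sector, verify the inequality on the two bounding rays, and split the boundary arc at $t=2\pi/3$, using the numerical input $f_{1}(1)/4<f_{1}(e^{i2\pi/3})$ from equation~(\ref{eq:direct_inequalities}) on the lower half $[\pi/2,2\pi/3]$.

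There are two small but pleasant differences. On the ray $\arg z=\pi/2$ the paper instead chains $f_{4}(ir)=f_{4}(r)<f_{3}(r)\le f_{1}(ir)$, invoking the already-established part (2) of Theorem~\ref{fkdomanancetheorem}; your direct comparison via Lemmas~\ref{lemma:fkrangebounds} and~\ref{lemma:f1(ir)_lower_bound} is self-contained and just as short. On the upper half of the arc $t\in[2\pi/3,3\pi/4]$ the paper bounds $f_{4}(e^{it})\le f_{4}(e^{i2\pi/3})=f_{1}(e^{i2\pi/3})/4<f_{1}(i)/4<f_{1}(e^{i3\pi/4})\le f_{1}(e^{it})$, which requires the second numerical inequality $f_{1}(i)/4<f_{1}(e^{i3\pi/4})$ from~(\ref{eq:direct_inequalities}); your observation that $4t-2\pi\ge t$ on this range gives $f_{4}(e^{it})=\tfrac14 f_{1}(e^{i(4t-2\pi)})\le\tfrac14 f_{1}(e^{it})$ directly from monotonicity, and so eliminates that second numerical check entirely. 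This is a genuine simplification.
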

\begin{proof}
The sector $S$ has boundary lines with $\alpha=\pi/2$ and $\beta=3\pi/4$.
We need to recall Theorem \ref{fkdomanancetheorem} part (2) (which is already proven) that $f_3(r)\le f_1(ir).$
 When $\arg(z)=\pi/2$,
$f_4(  i r) = f_4(r) < f_3(r) \le  f_1( i r)$,  $ 0< r \leq 1$. When $\arg(z)=3\pi/4$,
$f_4(r e^{ i 3 \pi/4})=0$ while $f_1(r e^{ i 3 \pi/4})>0$, $0<r \leq 1$.  
Hence $f_4(z) < f_1(z)$ on the radial boundary lines.

On the arc $e^{i t}$, $t \in [ \pi/2, 3\pi/4],$ we can use Proposition \ref{prop:decreasing} to show
both $f_1( e^{it})$ and $f_4( e^{it})$ are decreasing.  Using the secant line
bounds for $f_1(1)/4<f_1( e^{ i 2 \pi /3})$ and $f_1(i)/4<f_1( e^{ i 3 \pi/4})$, we find  that the required inequalities hold on $t\in [\pi/2,2\pi/3]$:
\[
f_4(e^{it})\le f_4(i) = f_1(1)/4 = \frac{ \pi}{ 4 \sqrt{6}}  < f_1( e^{ i 2 \pi/3})  \leq f_1( e^{ i t}) 
\]
and for  $t\in [2\pi/3,3\pi/4]$
\[
f_4(e^{it})\le f_4( e^{ i 2 \pi/3}) = f_1( e^{ i 2 \pi/3})/4 < f_1(i)/4 < f_1( e^{ i 3 \pi/4}) \leq f_1(e^{it}) 
\]
where $f_1( e^{ i 2 \pi/3})/4 < f_1(i)/4$ holds since $f_1(e^{it})$ is decreasing
and $ f_1(i)/4 < f_1( e^{ i 3 \pi/4})$ by  equation  (\ref{eq:direct_inequalities}).

\end{proof}

\begin{proposition}
On the sector with $\arg z \in [ \pi/2, 3\pi/4]$, $f_6(z) < f_3(z)$, $z \neq 0$.
\end{proposition}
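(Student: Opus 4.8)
The plan is to eliminate $f_6$ by the substitution $w=z^{3}$ and reduce the claim to the two-function comparison $f_2(w)<f_1(w)$, which has already been established on the closed half-disk $\{0<|w|\le 1,\ |\arg w|\le \pi/2\}$ (Proposition \ref{f1-f2-first-quadrant}, together with the symmetry $f_k(\bar w)=f_k(w)$ of the root dilogarithms about the real axis).

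First I would record, for any $z$ with $0<|z|\le 1$ and $w=z^{3}$ (so that $0<|w|\le 1$ and $z^{6}=w^{2}$), the two identities
\[
f_{3}(z)=\frac{1}{3}\Re\sqrt{Li_2(z^{3})}=\frac{1}{3}f_{1}(w),
\qquad
f_{6}(z)=\frac{1}{6}\Re\sqrt{Li_2(z^{6})}=\frac{1}{6}\Re\sqrt{Li_2(w^{2})}=\frac{1}{3}f_{2}(w),
\]
where all square roots are principal and the last equality uses the definition $f_{2}(w)=\frac{1}{2}\Re\sqrt{Li_2(w^{2})}$. These identities turn the desired inequality $f_{6}(z)<f_{3}(z)$ into the equivalent statement $f_{2}(w)<f_{1}(w)$.

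Next I would locate $w$. For $z\neq 0$ with $\arg z\in[\pi/2,3\pi/4]$ we have $\arg w=3\arg z$, which reduced modulo $2\pi$ runs from $-\pi/2$ at $\arg z=\pi/2$, through $0$ at $\arg z=2\pi/3$, to $\pi/4$ at $\arg z=3\pi/4$; hence $w\neq 0$, $|w|\le 1$, and $|\arg w|\le \pi/2$. Applying Proposition \ref{f1-f2-first-quadrant} to $w$, or to $\bar w$ when $\arg w<0$ (using $f_1(\bar w)=f_1(w)$ and $f_2(\bar w)=f_2(w)$), gives $f_{2}(w)<f_{1}(w)$, and the equivalence above finishes the proof.

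The argument is essentially immediate once one notices that cubing maps the sector $\{\pi/2\le \arg z\le 3\pi/4\}$ into the region where $f_2<f_1$ is already known, so the only point needing real care --- and the one obstacle worth flagging --- is branch bookkeeping: one must check that $\Re\sqrt{Li_2(z^{3})}$ and $\Re\sqrt{Li_2(z^{6})}$ are formed with the same principal square root used to define $f_1$ and $f_2$. This is harmless, since in each case $Li_2$ is evaluated at a point of the closed unit disk and the convention ``real part of the square root taken nonnegative'' is branch-independent. If one instead prefers to match the maximum-principle format of the neighbouring propositions, the same substitution reduces each boundary piece --- the ray $\arg z=\pi/2$, where $f_6=0<f_3$, the ray $\arg z=3\pi/4$, and the arc $e^{it}$ with $t\in[\pi/2,3\pi/4]$ --- to an instance of $f_2(w)<f_1(w)$, after which harmonicity of $f_6-f_3$ on the open sector closes the argument; either way the work is carried by Proposition \ref{f1-f2-first-quadrant}.
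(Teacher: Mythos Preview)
Your proof is correct and considerably slicker than the paper's. The paper argues by the maximum modulus principle directly for $f_3-f_6$: it splits the sector at $\arg z=2\pi/3$ into two pieces, verifies the inequality on the three radial boundaries $\arg z=\pi/2,\ 2\pi/3,\ 3\pi/4$ by explicit reductions like $f_6(re^{i2\pi/3})=f_1(r)/6$ versus $f_3(re^{i2\pi/3})=f_1(r^3)/3$, and on each boundary arc uses the monotonicity of $f_3(e^{it})$ and $f_6(e^{it})$ (Proposition~\ref{prop:decreasing}) together with endpoint comparisons such as $f_6(e^{i2\pi/3})=f_1(1)/6<f_1(e^{i\pi/4})/3=f_3(e^{i7\pi/12})$, the latter coming from Lemma~\ref{lemma:bound2}.

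Your substitution $w=z^{3}$ collapses all of this to the single already-proved statement $f_2(w)<f_1(w)$ on $|\arg w|\le\pi/2$ (Proposition~\ref{f1-f2-first-quadrant} plus conjugation symmetry), because $f_3(z)=\tfrac13 f_1(w)$, $f_6(z)=\tfrac13 f_2(w)$, and cubing sends the sector $[\pi/2,3\pi/4]$ into $[-\pi/2,\pi/4]$. This avoids the sector split, the arc subdivisions, and every numerical endpoint check; it also makes transparent \emph{why} the inequality holds, namely that $f_6/f_3$ is just a rescaled copy of $f_2/f_1$ viewed through the cube map. The paper's approach has the minor virtue of being a template that works uniformly for the neighbouring comparisons ($f_4<f_1$, $f_5<f_1$, $f_5<f_2$) where no such clean substitution is available, but for this particular pair your argument is strictly preferable. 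Your remark about branch bookkeeping is apt and correctly dispatched: since the root dilogarithm is defined via $\Re\sqrt{\,\cdot\,}\ge 0$, the identities $f_3(z)=\tfrac13 f_1(z^3)$ and $f_6(z)=\tfrac13 f_2(z^3)$ hold pointwise without any analytic-continuation subtleties.
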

\begin{proof}
We need to consider two sectors $S_1$ and $S_2$ so $\arg(z) \in [ \pi/2,2 \pi/3]$ and $[2 \pi/3, 3\pi/4]$ so
$f_6(z)$ will be harmonic in each.
The boundary lines of the sector $S_1$ are  $\alpha=\pi/2$ and $\beta=2 \pi/3$.
For $\arg(z)  =\pi/2$, $f_6( ir) = f_1( - r^6)=0$ while $f_3( ir) = f_1( i r^3)>0$ for $r\in (0,1]$.
For $\arg(z) = 2 \pi/3$, $f_6( r e^{ i 2 \pi/3}) = f_1(r)/6$ while $f_3( r e^{ i 2 \pi/3}) = f_1(r^3)/3$.
For $\arg(z) = 3 \pi/4$, 
\[
f_6( r e^{ i 3 \pi /4})  =  \frac{1}{6} f_1( i r^6) < \frac{1}{3} f_1( ir^3) < \frac{1}{3} f_1( r^3 e^{ i \pi/4}) 
= f_3( r e^{ i 3 \pi/4}) 
\]
since $f_1( ir)$ is increasing in $r$ by Proposition \ref{f1calculus}  and $f_1(z)$ is decreasing in the argument of $z$ by Proposition \ref{prop:decreasing}. Hence $f_6(z)  \leq f_3(z)$
are the radial lines. 

On the arc for $S_1$, both $f_3( e^{ it})$ and $f_6( e^{ it})$ are increasing on $[\pi/2, 2 \pi/3]$ by Proposition \ref{prop:decreasing}. It convenient
to use two subintervals $[ \pi/2, 7 \pi/12]$ and $[ 7 \pi/12, 2 \pi/3]$ in this case. We need to check that
\[
f_6( e^{ i 7 \pi /12}) < f_3( e^{ i \pi/2}), \quad f_6( e^{ i 2 \pi/3}) < f_3( e^{ i  7 \pi /12}) .
\]
Now
\begin{align*}
f_6( e^{ i 7 \pi /12})  &= f_1( e^{ i \pi /2 })/6 = f_1( i) /6 < f_1( i ) /3 =  f_3( e^{ i \pi/2})  ,
\\
f_6( e^{ i 2 \pi/3}) &= f_1( 1)/6 < f_3( e^{ i  7 \pi /12})  = f_1( e^{ i \pi/4}) /3
\end{align*}
since $\Re Li_2( e^{i \pi/4})>0$
and Lemma \ref{lemma:bound2}.

On the arc   $[2\pi/3, 3 \pi/4]$ for $S_2$, both $f_3(e^{it})$ and $f_6( e^{it})$ are decreasing.
The required inequality $f_6( e^{ i 2 \pi/3}) < f_3( e^{ i 3 \pi/4})$ holds  because 
$f_3( e^{ i 3 \pi/4}) =  f_1( e^{ i \pi/4}) /3$; so this inequality  reduces to the last inequality above.
\end{proof}

\begin{proposition}\label{lemma:weak} \label{prop:15}
On the sector $\arg z \in [ \pi/2, 5 \pi/8]$, $f_5(z) < f_1(z)$, $z \neq 0$.
\\
(b)
On the sector $\arg z \in [3 \pi/5, 4 \pi/5]$, $f_5(z) < f_2(z)$, $z \neq 0$.
\end{proposition}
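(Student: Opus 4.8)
The plan is to run the maximum modulus principle one sector at a time, exactly as in the preceding propositions. On any closed sector cut out by two consecutive branch rays of $f_k$ (the rays $\arg z = \pi j/k$) the function $f_k$ is harmonic in the interior and continuous up to the boundary, so the difference of two such $f$'s attains its maximum on the boundary of such a sector, and it suffices to verify the desired inequality on the two bounding radial segments and on the bounding circular arc. For part (a) the sector $\pi/2 \le \arg z \le 5\pi/8$ contains the branch ray $\arg z = 3\pi/5$ of $f_5$ in its interior, so I would split it there into the two sub-sectors $\pi/2 \le \arg z \le 3\pi/5$ and $3\pi/5 \le \arg z \le 5\pi/8$, on each of which $f_5-f_1$ is harmonic; for part (b) the sector $3\pi/5 \le \arg z \le 4\pi/5$ already lies between consecutive branch rays of $f_5$ (multiples of $\pi/5$) and of $f_2$ (multiples of $\pi/2$), so no subdivision is needed for the harmonicity of $f_5-f_2$.

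The radial segments are, with one exception, routine. On $\arg z = 3\pi/5$ one has $z^5 = -\,|z|^5 \in [-1,0]$, hence $f_5 \equiv 0$, while $f_1$ and $f_2$ are strictly positive there (they vanish only on $[-1,0]$, respectively on the imaginary axis). On $\arg z = \pi/2$ the bound $f_5(ir) < f_1(ir)$ is Theorem \ref{fkdomanancetheorem}(2). On $\arg z = 4\pi/5$ (part (b)) we have $f_5(re^{i4\pi/5}) = f_5(r) \le \frac{\pi}{5\sqrt6}\,r^{5/2}$ by Lemma \ref{lemma:fkrangebounds}, while $f_2(re^{i4\pi/5}) = \tfrac12 f_1(r^2e^{i2\pi/5}) \ge \tfrac12 f_1(ir^2) \ge \tfrac{\pi\sqrt2}{16}\,r$ by Proposition \ref{prop:decreasing} and Lemma \ref{lemma:f1(ir)_lower_bound}, and $\frac{\pi}{5\sqrt6}r^{5/2} < \frac{\pi\sqrt2}{16}r$ for $r \in (0,1]$.

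On each bounding arc both competing functions are monotone in $t$ by Proposition \ref{prop:decreasing}, and after writing $f_k(e^{it}) = \tfrac1k f_1(e^{ikt})$ and folding $kt$ into $[0,\pi]$ via $f_1(e^{i(\pi+s)}) = f_1(e^{i(\pi-s)})$, each arc inequality reduces to finitely many numerical comparisons at special angles. I would dispatch these with the inequalities already recorded in (\ref{eq:direct_inequalities}) and (\ref{eq:prop15}), the explicit values $f_1(1) = \pi/\sqrt6$ and $f_1(i)$ from Remark \ref{remark:f1(ir)}, and $f_1(i) < f_1(1)$, subdividing an arc into two or three pieces (as in the proof of $f_6 < f_3$) when the coarse comparison is too weak. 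For instance, on the arc $\pi/2 \le t \le 3\pi/5$ in part (a): $f_5(e^{it}) \le f_5(i) = \tfrac15 f_1(i) < \tfrac15 f_1(1) < \tfrac14 f_1(1) < f_1(e^{i2\pi/3}) \le f_1(e^{it})$.

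The real obstacle is the radial segment $\arg z = 5\pi/8$ in part (a) --- the one place where the function being bounded does not vanish and the dominating function is not a clean rescaling of $f_1(ir)$ or $f_1(r)$. There one must show $f_5(re^{i5\pi/8}) = \tfrac15 f_1\!\left(r^5 e^{-i7\pi/8}\right) < f_1(re^{i5\pi/8})$ for every $r \in (0,1]$. For $r$ bounded away from $0$ this follows from $f_1(r^5e^{-i7\pi/8}) \le f_1(r^5 e^{i5\pi/8})$ (Proposition \ref{prop:decreasing}) together with $\tfrac15 f_1(i) < f_1(e^{i2\pi/3}) \le f_1(e^{i5\pi/8})$; for small $r$ it follows from the power comparison $f_5(re^{i5\pi/8}) \le \frac{\pi}{5\sqrt6}r^{5/2}$ against a lower bound $f_1(re^{i5\pi/8}) \ge c\sqrt r$ obtained from $f_1(z) \ge |\Im Li_2(z)|/(2|Li_2(z)|^{1/2})$ and a Fej\'er-type lower bound on $\Im Li_2(re^{i5\pi/8})$. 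The delicate point is that with the crude constants these two regimes overlap only up to about $r \approx 0.97$, leaving a thin gap near $r = 1$; the cleanest repair I anticipate is to prove that $\rho \mapsto f_1(\rho e^{i\theta})$ is nondecreasing on $(0,1]$ for every $\theta \in [3\pi/5, 5\pi/8]$, for then the whole sub-sector $3\pi/5 \le \arg z \le 5\pi/8$ collapses at once: writing $z = \rho e^{i\phi}$ one gets $f_5(z) = \tfrac15 f_1(\rho^5 e^{i(4\pi - 5\phi)}) \le \tfrac15 f_1(\rho^5 e^{i5\pi/8}) \le \tfrac15 f_1(\rho e^{i5\pi/8}) \le \tfrac15 f_1(\rho e^{i\phi}) < f_1(\rho e^{i\phi}) = f_1(z)$. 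Establishing that monotonicity in the modulus --- equivalently, controlling the sign of $\Re\big[-\ln(1-\rho e^{i\theta})\,/\,\sqrt{Li_2(\rho e^{i\theta})}\big]$, which is not transparent once $\theta$ exceeds $\pi/2$ --- is where I expect the genuine work of this proposition to lie.
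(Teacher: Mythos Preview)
Your approach to part (b) is essentially the paper's: maximum modulus on the single sector $[3\pi/5,4\pi/5]$, with $f_5$ vanishing on the left radial line, the right radial line handled via Proposition \ref{prop:general_bounds}(b), and the arc split into subarcs with the comparison at $2\pi/3$ reducing to equation (\ref{eq:prop15}).

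For part (a) the paper takes a genuinely different and much shorter route that completely avoids the obstacle you identified at $\arg z = 5\pi/8$. Rather than comparing $f_5$ directly with $f_1$ via the maximum principle, the paper interposes $f_4$: since Proposition \ref{prop:13} already gives $f_4(z) < f_1(z)$ on $[\pi/2,3\pi/4]$, it suffices to show $f_5(z) < f_4(z)$ on $[\pi/2,5\pi/8]$. This is done pointwise at each fixed radius $r$, not by the maximum principle. For $t \in [\pi/2,5\pi/8]$ one has $4t \in [2\pi,5\pi/2]$, so $f_4(re^{it}) = \tfrac14 f_1(r^4 e^{i(4t-2\pi)})$ is decreasing, with minimum $f_4(re^{i5\pi/8}) = \tfrac14 f_1(ir^4)$; and $f_5(re^{it})$ attains its maximum at $t=\pi/2$, namely $f_5(ir) = \tfrac15 f_1(ir^5)$. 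Since $f_1(ir)$ is increasing (Proposition \ref{f1calculus}), $\tfrac15 f_1(ir^5) \le \tfrac15 f_1(ir^4) < \tfrac14 f_1(ir^4)$, and the proof is finished in two lines.

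What each approach buys: your direct route keeps the argument self-contained but forces you to control $f_1(\rho e^{i\theta})$ as a function of $\rho$ for $\theta$ past $\pi/2$, a monotonicity the paper never establishes and which, as you note, is not transparent. The paper's detour through $f_4$ trades that for the already-proven radial monotonicity of $f_1(ir)$, at the cost of depending on Proposition \ref{prop:13}. Your plan is not wrong, but the radial-monotonicity lemma you propose is a real additional result; the paper's trick makes it unnecessary.
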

\begin{proof}
(a)
Since $f_4(z) < f_1(z)$ on $[\pi/2,3 \pi/4]$, it is enough to show $f_5(z) < f_4(z)$. 
For $t \in [ \pi/2, 5 \pi/8]$, $f_4( re^{it})$ is decreasing while $f_5( r e^{ it})$ is decreasing on $[\pi/2,3\pi/5]$
and increasing on $[3\pi/5,5\pi/8]$.  For fixed $r \in (0,1]$,  the minimum value of $f_4( re^{it})$ is $f_4(re^{i 5\pi/8})
= f_1( i r^4)/4$ while the maximum value of $f_5(r e^{it})$ is $f_5( i r) = f_1( i r^5)/5$. 
Since $f_1( ir)$ is an increasing function, the desired inequality holds.
\\
(b)
We apply the maximum modulus principle on a sector $S$
with boundary radial lines $\arg(z) = \alpha$ or $\beta$ where
$\alpha=3\pi/5$ and $\beta=3 \pi/4$.

For $\arg(z) =  3 \pi/5$, $f_5(z)=0$ while $f_2( r e^{ i 3 \pi/5})>0$ with $r \in (0,1]$.
For $\arg(z)= 4 \pi/5$,  we see by part (2) of Proposition \ref{prop:general_bounds} that
\[
 f_5(r) <  f_2( r e^{ i 3\pi/4})  < f_2( r e^{ i 4 \pi /5}) .
\]
On the arc $e^{it}$, $3 \pi/5 \leq t \leq 4 \pi/5$, both $f_2(e^{it})$ and $f_5(e^{it})$ are increasing 
so it is enough to check the two evaluations:
\[
f_5( e^{ i 2 \pi/3}) < f_2( e^{ i 3 \pi/5}), \quad f_5( e^{ i 3 \pi/4}) < f_2( e^{ i 3 \pi/4 } ) .
\]
We just saw that the last inequality automatically holds.
For the remaining inequality, consider
\[
f_5( e^{ i 2 \pi/3}) = f_1( e^{ i 2 \pi/3})/5,  \quad f_2( e^{ i 3 \pi/5}) = f_1( e^{ i 4  \pi/5}) /2
\]
then we can  use equation (\ref{eq:prop15}).

\end{proof}

\end{document}